\numberwithin{equation}{section}
\newtheorem{theorem}{Theorem}[section]
\newtheorem{proposition}[theorem]{Proposition}
\newtheorem{lemma}[theorem]{Lemma}
\newtheorem{corollary}[theorem]{Corollary}
\theoremstyle{definition}
\newtheorem{definition}[theorem]{Definition}
\newtheorem{remark}[theorem]{Remark}
\newtheorem*{remark*}{Remark}
\newtheorem*{proposition*}{Proposition}
\newtheorem*{acknowledgement*}{Acknowledgement}
\newcommand{\Z}{\mathbb{Z}}
\newcommand{\Q}{\mathbb{Q}}
\newcommand{\R}{\mathbb{R}}
\DeclareMathOperator{\GL}{GL}
\DeclareMathOperator{\modulo}{mod}
\newcommand{\SL}{SL}
\newcommand{\HH}{\mathcal{H}}
\newcommand{\MF}{Maass~form~}
\newcommand{\abcd}[4]{\begin{pmatrix}#1&#2\\#3& #4\end{pmatrix}}
\newcommand{\sabcd}[4]{\left(\begin{smallmatrix}#1&#2\\#3& #4\end{smallmatrix}\right)}
\newcommand{\hyp}[4]{~_2F_1\left(\left.\begin{smallmatrix}#1,~#2\\#3\end{smallmatrix}\right\rvert#4\right)}
\renewcommand{\mod}{\text{ mod }}
\renewcommand{\Im}{\text{Im\,}}
\renewcommand{\Re}{\text{Re\,}}
\DeclareMathOperator{\Res}{Res}
\DeclareMathOperator{\sgn}{sgn}
\DeclareMathOperator{\Sym}{Sym}
\title{Weil's converse theorem for Maass forms and cancellation of zeros}
\author[M. Neururer]{Michael Neururer}\address{Fachbereich Mathematik, Technische Universit\"{a}t Darmstadt, Schlo\ss gartenstr. 7, 64289 Darmstadt, Germany.}
\email{neururer@mathematik.tu-darmstadt.de}
\author[T. Oliver]{Thomas Oliver}\address{Mathematical Institute, Andrew Wiles Building, University of Oxford, Radcliffe Observatory Quater, Woodstock Road, Oxford, OX2 6GG, UK and the Heilbronn Institute for Mathematical Research, Bristol, UK.}
\email{Thomas.Oliver@maths.ox.ac.uk}
\date{August 08, 2019}
\begin{document}
\subjclass[2010]{11F66; 11M41; 11F12.}
\maketitle
\subsection*{Abstract}
We prove two principal results. Firstly, we characterise Maass forms in terms of functional equations for Dirichlet series twisted by primitive characters. The key point is that the twists are allowed to be meromorphic. This weakened analytic assumption applies in the context of our second theorem, which shows that the quotient of the symmetric square $L$-function of a Maass newform and the Riemann zeta function has infinitely many poles.
\section{Introduction}
Given two distinct $L$-functions $L_1$ (resp. $L_2$) in the Selberg class (cf. \cite{SelbergSurvey}), let $S_1$ resp. $S_2$ denote the set of zeros in the critical strip. A lower bound for the number of elements in the symmetric difference $\left(S_1\backslash S_2\right)\bigcup\left(S_2\backslash S_1\right)$ up to a finite height was given by Murty--Murty \cite{MM}. Denote by $d_1$ (resp. $d_2$) is the degree of $L_1$ (resp. $L_2$), which is roughly speaking the number of gamma factors in the functional equation. Under certain orthogonality hypotheses, a lower bound for the asymmetric difference $S_1\backslash S_2$ was established by Bombieri--Perelli when $d_2=d_1$ \cite{BombieriPerelli}. When $d_2-d_1\leq0$, a lower bound for $S_1\backslash S_2$ was proved by Srinivas \cite{Srinivas}. Denoting the completed $L$-functions by $\Lambda_1$ (resp. $\Lambda_2$) we see in particular that the quotient $\Lambda_2(s)/\Lambda_1(s)$ has infinitely many poles when $d_2-d_1\leq0$. Much less is known in the cases where $d_2-d_1>0$.

In \cite[Corollary~1.9]{LFAD}, it was shown that if $\pi_{1}$ (resp. $\pi_{2}$) is a unitary cuspidal automorphic representation of GL$_{d_1}(\mathbb{A}_{\mathbb{Q}})$ (resp. GL$_{d_2}(\mathbb{A}_{\mathbb{Q}})$), $d_2-d_1\leq1$ and $\pi_1\ncong\pi_2$, then the quotient $\Lambda(s,\pi_2)/\Lambda(s,\pi_1)$ has infinitely many poles. Prior to Booker's results, it was established by Raghunathan that if $g$ is a non-CM holomorphic modular form with nebentypus $\chi$, then $\Lambda(\Sym^2g,s)/\Lambda(\chi,s)$ has infinitely many poles \cite{ACOZOLF}. In that case, one has $d_1=1$ and $d_2=3$, so that $d_2-d_1=2$. It is necessary to exclude the CM case. Indeed, if an elliptic curve $E$ has CM, then $L(\mathrm{Sym}^2E,s)$ is in fact divisible by the Riemann zeta function. In this paper we replace $g$ by a real-analytic Maass form $f$, under a primitivity assumption on $L(\mathrm{Sym}^2f,s)$. 

The proof works by showing that were the quotient to have finitely many poles, then it would be the $L$-function of a Maass form. Consequently, a significant portion of this paper is concerned with characterising Maass forms in terms of analytic properties of their $L$-functions and their twists, i.e., converse theorems for Maass forms. Hecke \cite{Hecke1936} established a converse theorem for modular forms of level 1 using a single functional equation, and the analogous statement for Maass forms of level 1 was proved by Maass \cite{Maass1949}. 
A converse theorem for modular forms of level $N\geq1$ was given by Weil \cite{UDBDRDF}. Weil's key idea was to assume, along with the appropriate functional equation of the $L$-function, functional equations for twists of the $L$-function. Shortly after Weil, Jacquet--Langlands proved a converse theorem for automorphic representations of $\GL_2(\mathbb{A}_{F})$ over global fields $F$ \cite{AFOGL2}. In particular, the Jacquet--Langlands converse theorem applies to holomorphic modular and Maass forms of level $N$. 

Over the years there have been various attempts to reduce the number and ramification of twists in converse theorems \cite{PSconverse}, \cite{modFormsNDirichletSeries}, \cite{Wli}, \cite{EOHCT}, \cite{DPZ}. In our proposed application, there is no shortage of twisted functional equations, and in our converse theorem it suffices to use the same twisting moduli as Weil (see also \cite[Theorem~4.3.15]{Miyake}). Our focus is rather on weakening the other analytic assumptions which are not applicable in our context and not covered by the converse theorems of Weil or Jacquet--Langlands. Similarly to the work of Booker--Krishnamurthy \cite{ASOTGL2CT, WCTWP}, we must allow the non-trivial character twists to have arbitrary poles and assume that the trivial twist has at most finitely many poles. We must also avoid Euler products, as in general the Euler factors of quotients will not be of standard form. Altogether, our analytic assumptions are so weak that they can be applied to quotients of automorphic $L$-functions for a contradiction. We note that Raghunathan proved a version of the Hecke--Maass converse theorem for level 1 Maass forms allowing for polar $L$-functions in \cite{OLFWPSMFE}. Our theorem is different in that it assumes twisted functional equations rather than an Euler product, and applies to arbitrary level $N$. 

The proof of our converse theorem works by showing that meromorphy and the twisted functional equations imply that the twists are holomorphic away from a small set of special points. To see this, we use the asymptotics of hypergeometric functions to study the Taylor expansions of Fourier--Whittaker series. Once we have reduced to the entire case, our argument closely follows Weil's with one crucial difference. Let $\mathcal{H}$ denote the upper half-plane. Weil noticed that if a holomorphic function $F:\mathcal{H}\rightarrow\mathbb{C}$ is invariant under an infinite order elliptic matrix\footnote{Such a matrix can not have integral coefficients, as the elliptic matrices in $\SL_2(\mathbb{Z})$ are of finite order.} in SL$_2(\mathbb{R})$ acting via the weight $k$ slash operator for non-zero $k$, then it vanishes identically on $\mathcal{H}$. Weil applied this to functions of the form $F=f-f|\gamma$, where $f$ is a holomorphic Fourier series and $\gamma\in\Gamma_0(N)$. In the real analytic setting, there exist non-constant functions that are invariant under infinite order elliptic matrices. The new insight in our proof, the method of ``two circles'', was suggested to the authors by David Farmer. If a real analytic, or even a continuous, function is invariant under two infinite order elliptic matrices, then it is constant (see Theorem \ref{TwoCircles.theorem}). Following Weil's approach we construct two such matrices under which $F$ is invariant from a sufficiently large set of twisted functional equations. 

We briefly review some standard notation. Recall the archimedean Euler factor of the Riemann zeta function:
\begin{equation}\label{eq:gammaR}
\Gamma_\R(s) = \pi^{-s/2}\Gamma(s/2).
\end{equation}
Given a Dirichlet character $\psi$ mod $q$, one has the associated Gauss sum:
\begin{equation}\label{eq:GaussSum}
\tau(\psi)=\sum_{a\mod q}\psi(a)e^{2\pi i\frac{a}{q}}.
\end{equation}
We will encounter the Whittaker function, 
\begin{equation}
W_\nu(u)=4\sqrt{|u|}K_v(2\pi |u|),
\end{equation}
where  is the Bessel function given explicitly by the following integral:
\begin{equation}
K_v(u)=\frac12\int_0^\infty e^{-|u|(t+t^{-1})/2}t^v\frac{dt}{t}.
\end{equation}
We conclude this introduction by stating our principal theorems. The following is proved in Section~\ref{PolarConverse.subsection}.
\begin{theorem}\label{thm:PolarConverse}
Let $N$ be a positive integer, $\chi$ be a Dirichlet character mod $N$, $\epsilon\in\{0,1\}$, $\nu\in\mathbb{C}\backslash\{0\}$ be such that $\frac{1}{4}-\nu^2>0$, and $a_n,b_n$ be sequences of complex numbers such that $|a_n|,|b_n|=O\left(n^{\frac{1}{2}+\kappa}\right)$ for some $0<\kappa<\frac12-|\Re\nu|$. For all $q$ relatively prime to $N$, primitive Dirichlet characters $\psi$ modulo $q$, and $k$ such that $\psi(-1)=(-1)^k$, define
\begin{equation}\label{eq:L}
L_f(s,\psi)=\sum_{n=1}^{\infty}\psi(n)a_nn^{-s},~L_g(s,\bar{\psi})=\sum_{n=1}^{\infty}\overline{\psi(n)}b_nn^{-s},
\end{equation}
and, 
\begin{align}\label{eq:Lambda}\begin{split}
\Lambda_f(s,\psi)&=\Gamma_\R(s+[\epsilon+k]+\nu)\Gamma_\R(s+[\epsilon+k]-\nu)L_f(s,\psi),\\
\Lambda_g(s,\bar{\psi})&= \Gamma_\R(s+[\epsilon+k]+\nu)\Gamma_\R(s+[\epsilon+k]-\nu)L_g(s,\bar{\psi}),
\end{split}\end{align}
where $[\epsilon+k]\in\{0,1\}$ is chosen to be equal to $\epsilon+k$ modulo $2$.  If $\psi=\textbf{1}$ is the trivial character we omit it from the notation. Let $\mathcal{P}$ be a set of odd primes coprime to $N$ such that the congruence $p\equiv u\bmod v$ has infinitely many solutions $p\in\mathcal{P}$ for all $u\in\Z$ and $v\in\Z_{>0}$ with $(u,v)=1$. For all primitive Dirichlet characters of modulus $q\in\{1\}\cup\mathcal{P}$ assume $\Lambda_f(s,\psi)$ and $\Lambda_g(s,\psi)$ continue to meromorphic functions on $\mathbb{C}$ and satisfy the functional equation
\begin{equation}\label{eq:FE}
\Lambda_f(s,\psi)=(-1)^{\epsilon}\psi(N)\chi(q)\frac{\tau(\psi)}{\tau(\overline{\psi})}(q^2N)^{\frac12-s}\Lambda_g(1-s,\bar{\psi}).
\end{equation}
If there is a non-zero polynomial $P(s)\in\mathbb{C}[s]$ such that $P(s)\Lambda_f(s)$ continues to an entire function of finite order, then $\Lambda_f(s)$ and $\Lambda_g(s)$ are analytic on $\mathbb{C}-\{\pm\nu,1\pm\nu\}$, with at most simple poles in the set $\{\pm\nu,1\pm\nu\}$, and the following series define weight 0 Maass forms on $\Gamma_0(N)$ of parity $\epsilon$, nebentypus $\chi$ (resp. $\overline{\chi}$) and eigenvalue $\frac{1}{4}-\nu^2$:
\begin{equation}\label{eq:FWseries}
f(z)=f_0(z) + \tilde{f}(z),~g(z)=g_0(z) + \tilde{g}(z),
\end{equation}
where 
\begin{equation}\label{eq:tilde}
\tilde{f}(z):=\sum_{n\neq 0}\frac{a_n}{2\sqrt{|n|}}W_{\nu}(ny)e(nx),~\tilde{g}(z):=\sum_{n\neq 0}\frac{b_n}{2\sqrt{|n|}}W_{\nu}(ny)e(nx),
\end{equation}
in which, for $n\geq0$, $a_{-n}=(-1)^{\epsilon}a_n$, $b_{-n}=(-1)^{\epsilon}b_n$, and
\begin{align}\label{eq:constneq0}
f_0(z)&=- \Res_{s=-\nu}\Lambda_f(s) y^{\frac12+\nu} -\Res_{s=\nu}\Lambda_f(s) y^{\frac12-\nu},
\\
g_0(z)&=N^{\frac12+\nu}\Res_{s=1+\nu}\Lambda_g(s) y^{\frac12+\nu} + N^{\frac12-\nu}\Res_{s=1-\nu}\Lambda_g(s) y^{\frac12-\nu}.
\end{align}
Furthermore $f(z) = g(-1/Nz)$ for all $z\in\HH$.
\end{theorem}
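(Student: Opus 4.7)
My plan proceeds in three stages: (i) upgrade the raw meromorphy of the twists to precise pole information, (ii) deduce the Fricke identity $f(z)=g(-1/Nz)$ via Mellin inversion, and (iii) promote the twisted functional equations to $\Gamma_0(N)$-invariance by the two-circles method.

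\emph{Stage (i): pole structure.} From the hypothesis that $P(s)\Lambda_f(s)$ is entire of finite order, the trivial case of \eqref{eq:FE} yields the analogous statement for $\Lambda_g$, so both have finitely many poles, confined by absolute convergence of \eqref{eq:L} for $\Re s > \tfrac{3}{2}+\kappa$ and the functional equation to a bounded vertical strip. Since $\nu\neq 0$ and $|\Re\nu|<\tfrac{1}{2}$, the gamma factors in \eqref{eq:Lambda} are holomorphic at $\pm\nu$ and $1\pm\nu$, so any surviving poles of $\Lambda_f$ and $\Lambda_g$ must be simple and confined to $\{\pm\nu,1\pm\nu\}$. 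For non-trivial primitive $\psi$ mod $p\in\mathcal{P}$, the twist $\Lambda_f(s,\psi)$ is a priori only meromorphic; I would analyse the Taylor-type expansion as $y\to 0^+$ of $\sum_{n\neq 0}\psi(n)a_n W_\nu(ny)e(nx)/(2\sqrt{|n|})$ using the hypergeometric expansion of $K_\nu$ at the origin. The functional equation \eqref{eq:FE} dictates the admissible powers of $y$ in this expansion, and a spurious residue of $\Lambda_f(s,\psi)$ would contribute a forbidden power; all such residues must therefore vanish, proving $\Lambda_f(s,\psi)$ and $\Lambda_g(s,\bar\psi)$ to be entire of finite order.

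\emph{Stage (ii): the Fricke identity.} Inverse Mellin transform applied to $\Lambda_f(s)y^{-s}$ along a far-right contour $\Re s=c$, using the standard conversion of $W_\nu$ to products of $\Gamma_\R$ via Mellin transforms of $K_\nu$, reconstructs $\tilde f(iy)$. Shifting the contour to $\Re s=1-c$ picks up residues at $\pm\nu$ (producing $f_0(iy)$) and at $1\pm\nu$; after applying the trivial-character case of \eqref{eq:FE} the displaced integral equals $\tilde g(i/Ny)$ and the residues at $1\pm\nu$ reproduce $g_0(i/Ny)$. Hence $f(iy)=g(i/Ny)$ for every $y>0$. Since both $f(z)$ and $z\mapsto g(-1/Nz)$ are real-analytic on $\HH$ and annihilated by the elliptic operator $\Delta-(\tfrac{1}{4}-\nu^2)$, agreement on the imaginary axis propagates to $\HH$, giving $f(z)=g(-1/Nz)$ everywhere.

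\emph{Stage (iii): invariance under $\Gamma_0(N)$.} For each $p\in\mathcal{P}$, finite Fourier inversion over the primitive characters $\psi$ mod $p$ converts the twisted functional equations into transformation laws for the arithmetic-progression sums $\sum_{n\equiv a\,(p)}$ inside $\tilde f$. Combining these with the Fricke identity from Stage (ii) in Weil's style produces, for a suitable $\gamma_p=\sabcd{a}{b}{Np}{d}\in\Gamma_0(N)$, two independent infinite-order elliptic matrices in $\SL_2(\R)$ fixing $F:=f-\chi(d)\,f|\gamma_p$. Theorem~\ref{TwoCircles.theorem} then forces $F$ to be constant, and the vanishing of its constant Fourier coefficient gives $F\equiv 0$. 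Because $\mathcal{P}$ meets every coprime residue class infinitely often, the matrices $\gamma_p$ together with $\sabcd{1}{1}{0}{1}$ generate $\Gamma_0(N)$, so $f|\gamma=\chi(d)f$ for all $\gamma\in\Gamma_0(N)$. Finally, termwise $\tilde f$ and $f_0$ are Laplace eigenfunctions with eigenvalue $\tfrac{1}{4}-\nu^2$, and the symmetry $a_{-n}=(-1)^\epsilon a_n$ yields $f(-\bar z)=(-1)^\epsilon f(z)$; the same argument applies to $g$.

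The principal obstacle is Stage (iii). In the holomorphic setting, Weil deduced the vanishing of his analogue of $F$ from invariance under a single infinite-order elliptic matrix via the maximum modulus principle, but this is false for real-analytic $F$, and extracting a \emph{second} independent elliptic invariance from the available twisted functional equations—so that the two-circles method applies—is the core new difficulty. A secondary, nontrivial issue is the hypergeometric Taylor-expansion analysis in Stage (i), which is the price paid for weakening the standard entireness hypothesis on the twists to mere meromorphy; without it, the Mellin inversion in Stage (ii) would collect uncontrolled spurious residues.
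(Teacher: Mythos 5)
Your overall architecture (reduce to entire twists, deduce the Fricke identity by Mellin inversion, then a Weil-type argument closed off by Theorem~\ref{TwoCircles.theorem}) matches the paper's, but two of your steps fail as stated. The decisive error is in Stage (ii): knowing $f(iy)=g(i/Ny)$ on the imaginary axis and that both functions are annihilated by $\Delta-(\frac14-\nu^2)$ does \emph{not} propagate the identity to $\HH$. Eigenfunctions of the hyperbolic Laplacian are not determined by their restriction to a geodesic — any odd combination $\sqrt{y}K_\nu(2\pi ny)\sin(2\pi nx)$ vanishes identically on the imaginary axis — and indeed for $\epsilon=1$ your identity is the vacuous $0=0$. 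This is exactly the failure of ``holomorphic rigidity'' in the real-analytic setting that the paper must work around: it computes the Mellin transform of $f$ along \emph{every} ray $z=(w+i)y$, which is what forces the hypergeometric functions of Proposition~\ref{prop:Mellin-transform} into the argument, and obtains $f(z)=g(-1/Nz)$ pointwise on each ray. The same device is needed again in your Stage (iii), where the twisted identities $f_\psi(z)=\chi(q)\psi(-N)\frac{\tau(\psi)}{\tau(\overline{\psi})}g_{\overline{\psi}}(-1/Nq^2z)$ must likewise hold on all of $\HH$, not just on a one-dimensional slice; without the ray/hypergeometric mechanism your ``finite Fourier inversion'' step has nothing two-dimensional to invert to. (A smaller deviation in Stage (iii): you only obtain invariance for $\gamma_p=\sabcd{a}{b}{Np}{d}$ and then invoke an unproven claim that these together with $T$ generate $\Gamma_0(N)$; the paper needs no such generation statement, since its construction puts the primes from $\mathcal{P}$ in the \emph{diagonal} entries via two-sided translation, so arbitrary lower-left entry $Nc$ is handled directly.)

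Stage (i) also has a gap. For the untwisted functions, ``finitely many poles $+$ functional equation $+$ the gamma factors are holomorphic at $\pm\nu,1\pm\nu$'' does not imply that the poles of $\Lambda_f,\Lambda_g$ are simple and confined to $\{\pm\nu,1\pm\nu\}$; that confinement is itself one of the conclusions of the theorem, and in the paper it is extracted from the additive-twist machinery (Proposition~\ref{AnalyticPropertiesTwists.proposition} and Corollary~\ref{AnalyticPropertiesTwists2.proposition} applied with $b=q=1$) together with the hypothesis $\sigma=\frac12+\kappa<1-|\Re\nu|$. For the character twists, your ``forbidden powers of $y$'' heuristic presupposes that residues of $\Lambda_f(s,\psi)$ can be matched against a small-$y$ expansion, i.e.\ that one can shift contours; but a priori the twists are merely meromorphic with no control on the location, number, or growth near their poles, so no contour shift or termwise expansion is available. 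The paper circumvents precisely this by never shifting contours for the twists: it uses the finite-order hypothesis and Phragm\'en--Lindel\"of only for $P(s)\Lambda_f(s)$, the circle integral of Lemma~\ref{CircleIntegral.lemma}, the series-level approximation of Lemma~\ref{lem:g approximation}, and then the $T_\beta$/Vandermonde isolation argument to bootstrap the additive twists into $\mathcal{M}^\nu(-\infty,\infty)$, followed by separate residue-cancellation arguments (Lemma~\ref{Residues.lemma} and the subsequent propositions) to make the $\epsilon=1$ case and the primitive character twists genuinely entire before Theorem~\ref{thm:EntireConverse} can be applied. As it stands, your proposal identifies the right difficulties but leaves both of them — the two-dimensional identity in Stages (ii)--(iii) and the pole control in Stage (i) — unresolved.
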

The assumption that $\sigma=\frac12+\kappa$ is first used in Lemma~\ref{Residues.lemma}. This assumption is sufficient for our main application, but should be removed with further work. The assumption in Theorem~\ref{thm:PolarConverse} that $\nu\neq0$ will be removed in a follow-up paper. The following is proved in Section~\ref{sec.Quotients}.

\begin{corollary}\label{thm:Quotients}
Let $f$ be a weight 0 Maass newform on $\Gamma_0(N)$ such that $\nu\neq0$ and $\mathrm{Sym}^2f$ is cuspidal, and let $\xi(s)=\pi^{-s/2}\Gamma(s/2)\zeta(s)$ denote the completed Riemann zeta function. The quotient $\Lambda\left(\Sym^2f,s\right)/\xi(s)$ has infinitely many poles.
\end{corollary}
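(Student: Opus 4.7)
The approach is a proof by contradiction, following the template of Raghunathan's argument \cite{ACOZOLF} in the holomorphic case. The plan is to assume $\Lambda(\Sym^2 f, s)/\xi(s)$ has only finitely many poles, apply Theorem~\ref{thm:PolarConverse} to produce a Maass form whose $L$-function equals $L(\Sym^2 f, s)/\zeta(s)$, and then contradict cuspidality of $\Sym^2 f$ via Rankin--Selberg pole-counting.

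The first task is to set up the converse-theorem input. Write $L(\Sym^2 f, s)/\zeta(s) = \sum_n c_n n^{-s}$, so that $c_n = \sum_{d \mid n} \mu(d) a_{\Sym^2 f}(n/d)$; standard bounds on Hecke eigenvalues (e.g.\ Kim--Sarnak) give $c_n = O(n^\kappa)$ for any $\kappa > 0$, satisfying the growth hypothesis of Theorem~\ref{thm:PolarConverse}. I would apply the theorem with spectral parameter $2\nu$ in place of $\nu$, parity $\epsilon = 0$, and trivial nebentypus: the archimedean factor of $\Sym^2 f$, namely $\Gamma_\R(s + 2\nu)\Gamma_\R(s)\Gamma_\R(s - 2\nu)$, divided by $\Gamma_\R(s)$ matches the required $\Gamma_\R(s + 2\nu)\Gamma_\R(s - 2\nu)$, and the hypothesis $\tfrac14 - (2\nu)^2 > 0$ is guaranteed by the Kim--Sarnak bound $|\Re\nu| \leq 7/64 < 1/4$.

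Next I would verify the twisted functional equations. For each primitive Dirichlet character $\psi$ of prime modulus $q$ coprime to $N$, the Shimura--Gelbart--Jacquet theory of the symmetric square together with Jacquet--Piatetski-Shapiro--Shalika furnishes a functional equation for $\Lambda(\Sym^2 f \otimes \psi, s)$ with conductor $q^3 N$; dividing by the classical Hecke functional equation for $\Lambda(\psi, s)$ (conductor $q$) yields a functional equation of the form (\ref{eq:FE}) for the quotient $L(\Sym^2 f \otimes \psi, s)/L(\psi, s)$, with the advertised conductor $q^2 N$. These quotients are meromorphic as quotients of meromorphic functions, and by assumption the trivial twist is entire after multiplication by a suitable polynomial. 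Theorem~\ref{thm:PolarConverse} then produces a weight $0$ Maass form $h$ on $\Gamma_0(N)$ of parity $0$, trivial nebentypus, and eigenvalue $\tfrac14 - 4\nu^2$ whose $L$-function equals $L(\Sym^2 f, s)/\zeta(s)$.

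Finally I would derive the contradiction. From $L(\Sym^2 f, s) = \zeta(s) L(h, s)$ and the multiplicativity inherited by both sides, the Satake parameters of $\Sym^2 f$ at each unramified prime $p$ are of the form $\{1, \beta_{1,p}, \beta_{2,p}\}$ with $\{\beta_{1,p}, \beta_{2,p}\}$ those of $h$. The Rankin--Selberg convolution therefore factors, formally as the convolution of the isobaric sum $\mathbf{1} \oplus h$ with itself,
\[
L(\Sym^2 f \times \overline{\Sym^2 f}, s) = \zeta(s) \, L(h, s) \, L(\bar h, s) \, L(h \times \bar h, s),
\]
yielding a pole of order at least $2$ at $s = 1$ ($\zeta$ and $L(h \times \bar h)$ each contributing a simple pole), which contradicts the simple pole forced by cuspidality of $\Sym^2 f$. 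The hardest step is this final one: Theorem~\ref{thm:PolarConverse} produces $h$ merely as a Maass form, not a priori a Hecke eigenform generating a cuspidal automorphic representation, so one must first upgrade $h$ to such an object before invoking the Rankin--Selberg machinery. I would do this by exploiting the Euler product and multiplicativity of Fourier coefficients of $h$ inherited from $L(h, s) = L(\Sym^2 f, s)/\zeta(s)$ to identify $h$ as a newform and cite standard results on its associated automorphic representation.
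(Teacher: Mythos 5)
Your reduction to Theorem~\ref{thm:PolarConverse} is essentially the paper's: assume finitely many poles, bound the coefficients of $L(\Sym^2f,s)/\zeta(s)$ via the Dirichlet convolution with $\mu$, obtain the twisted functional equations by dividing the $\mathrm{GL}(3)\times\mathrm{GL}(1)$ functional equations by the Dirichlet ones, and conclude that the quotient is $\Lambda_h(s)$ for a weight $0$ Maass form $h$. (Two small inaccuracies there: the bound $c_n=O(n^{\kappa})$ for all $\kappa>0$ is the Ramanujan conjecture, not a consequence of Kim--Sarnak; the Jacquet--Shalika bound $O(n^{1/2+\kappa})$ is what is needed and is enough for the hypothesis of Theorem~\ref{thm:PolarConverse}. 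Also the level of $h$ is the conductor $M\mid N^2$ of $\Sym^2f$, not $N$ itself.)

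The genuine gap is in your endgame. The Rankin--Selberg pole count requires $\Sym^2f\cong\mathbf{1}\boxplus\pi_h$ as isobaric representations, hence requires $h$ to be (a multiple of) a Hecke eigenform generating a cuspidal automorphic representation. Your proposed upgrade --- ``exploit the Euler product and multiplicativity of the coefficients of $L_h(s)=L(\Sym^2f,s)/\zeta(s)$ to identify $h$ as a newform and cite standard results'' --- does not go through: there is no standard result asserting that a Maass form whose Dirichlet series has multiplicative coefficients (or an Euler product) must be a Hecke eigenform. This is precisely a Maass-form analogue of Booker--Thorne \cite{BT}, and the paper's footnote explicitly flags that such a generalisation has not been established (if it were, the corollary would instead follow from \cite{factorisationLfunctions}). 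The paper sidesteps the issue entirely: it writes $h$ as a finite linear combination $h=\sum_i\alpha_ih_i$ of Hecke eigenforms, so that $L(\Sym^2f,s)=\zeta(s)\sum_i\alpha_iL_{h_i}(s)$, and this nontrivial relation among Euler products contradicts the linear independence of automorphic $L$-functions of Kaczorowski--Molteni--Perelli \cite{KMP}; no eigenform upgrade and no Rankin--Selberg computation is needed. To repair your argument you would either have to prove the Booker--Thorne analogue for Maass forms or replace your final step by this linear-independence argument.
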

We note that criteria for the cuspidality of $\mathrm{Sym}^2f$ are given in \cite[Theorem~9.3]{RBAR23}.

\subsection*{Acknowledgement} We are grateful to Andrew Booker for his comments throughout this project. We thank David Farmer, whose ``two circles'' idea is an important ingredient in the proof of Theorem~\ref{thm:EntireConverse}, and Giuseppe Molteni, whose careful reading of an earlier version lead to several improvements.

\section{Preliminaries}
\subsection{The Mellin transform}\label{Mellin_transform.subsection}
The Mellin transform of a function $\phi$ on the positive real axis is given by $\mathcal{M}(\phi)(s)=\int_0^\infty \phi(t) t^{s-1}dt$. We will often use the Mellin transform shifted by $1/2$:
\[
\widetilde{\mathcal{M}}(\phi)(s) =\mathcal{M}(\phi)\left(s-\frac12\right)= \int_0^\infty \phi(t)t^{s-\frac12}\frac{dt}{t},
\]
Suppose $\phi$ is of rapid decay at $\infty$ and grows like $t^{-A}$ for some real number $A$ as $t\to 0$. Then $\widetilde{\mathcal{M}}(\phi)$ is holomorphic in the half plane $\Re s > A+\frac12$. The inverse of $\widetilde{\mathcal{M}}$ is given by
\[
\widetilde{\mathcal{M}}^{-1}(f)(t) = \frac{1}{2\pi i}\int_{\left(\sigma-\frac12\right)}f(s)t^{\frac12-s}ds,
\]
where $\sigma\gg 0$ and $(\sigma-\frac12)$ is the path $\sigma-\frac12 + it$ with $t$ going from $-\infty$ to $\infty$.
\subsection{Maass forms}\label{Maass_forms.subsection}
In this section we recall some basic facts about Maass forms. For a function $f:\HH\to\mathbb{C}$ and $\gamma=\sabcd abcd\in\GL_2^+(\R)$ we write $f|\gamma(z) = f\left(\frac{az+b}{cz+d}\right)$. A \MF of weight zero on $\Gamma_0(N)$ is an eigenfunction on $\Gamma_0(N)\backslash\mathcal{H}$ of the Laplace-Beltrami operator with eigenvalue\footnote{The Selberg eigenvalue conjecture asserts that $\nu$ is purely imaginary.} $\frac14-\nu^2>0$, satisfying certain growth conditions and the transformation rule
\[
f|\gamma=\chi(d)f,~\forall\gamma=\begin{pmatrix}a&b\\c&d\end{pmatrix}\in\Gamma_0(N),
\]
for a Dirichlet character $\chi$ mod $N$. A (weight $0$) \MF $f$ on $\Gamma_0(N)$ has a Fourier expansion of the form
\begin{align}\label{FourierExpansionW.equation}
f(z)=f_0(y)+\sum_{n\neq 0} \frac{a_n}{2\sqrt{|n|}} W_\nu(n y)e(nx),
\end{align}
where $a_n\in\mathbb{C}$ ($n\neq0$), $W_\nu(u)=4\sqrt{|u|}K_v(2\pi |u|)$ is the Whittaker function and $K_v(u)=\frac12\int_0^\infty e^{-|u|(t+t^{-1})/2}t^v\frac{dt}{t}$ is a Bessel function. The term $f_0(y)$ will be made explicit below.  By diagonalising with respect to the involution $\iota:z\mapsto -\overline{z}$ on $\mathcal{H}$, we may assume that $f$ is either even ($a_n = a_{-n}$) or odd ($a_n = -a_{-n}$). The \emph{parity} $\epsilon\in\{0,1\}$ of $f$ is $0$ if $f$ is even and $1$ if it is odd. By $\cos^{(k)}$ we denote the $k$-th derivative of $\cos$. If $f$ has parity $\epsilon,$ then 
\begin{equation}\label{FourierExpansionK.equation}
f(z)=f_0(y) + (-i)^\epsilon\sum_{n=1}^{\infty} \frac{a_n}{\sqrt{n}} W_\nu(n y)\cos^{(\epsilon)}(2\pi n x),
\end{equation}
where
\begin{align}\label{eqn:constant term}
f_0(y) = \begin{cases} a_0 y^{\frac12+\nu} + a_0' y^{\frac12-\nu},&\epsilon=0,\nu\neq 0,\\
a_0 y^{\frac12} + a_0' y^{\frac12}\log y,&\epsilon=0,\nu= 0,\\
0,& \epsilon=1,
\end{cases}
\end{align}
for $a_0,a_0'\in\mathbb{C}$. To a \MF we can associate an $L$-function
a priori defined on the right half plane $\Re s>3/2$ by the Dirichlet series
\begin{equation}
L_f(s)=\sum_{n=1}^\infty a_n n^{-s}.
\end{equation}
The completed $L$-function of $f$ is defined as
\[
\Lambda_f(s) =\Gamma_\R(s+\epsilon+\nu)\Gamma_\R(s+\epsilon-\nu)L_f(s).
\]
The following facts are standard: If $\epsilon=1$, then the function $\Lambda_f(s)$ continues to an entire function; if $\epsilon=0$, then $\Lambda_f(s)$ has meromorphic continuation to $\mathbb{C}$ with possible simple poles in the set $\{\pm\nu,1\pm\nu\}$ if $\nu\neq 0$ and at most double poles in $\{0,1\}$ if $\nu=0$. Setting $g(z)=f(-1/Nz)$ the $L$-functions $\Lambda_f(s)$ and $\Lambda_g(s)$, along with their twists, satisfy the functional equations \eqref{eq:FE}. 
\section{Weil's converse theorem for Maass forms}
We need the following as a stepping stone to Theorem~\ref{thm:PolarConverse}. Unlike Theorem~\ref{thm:PolarConverse}, it applies when $\nu=0$ and assumes only polynomial growth for the sequences $\{a_n\},\{b_n\}$.
\begin{theorem}\label{thm:EntireConverse}
Let $N$, $\chi$, $\epsilon$ and $\mathcal{P}$ be as in Theorem~\ref{thm:PolarConverse}, let $\nu\in\mathbb{C}$ be such that $\frac{1}{4}-\nu^2>0$, and let $a_n,b_n$ be sequences of complex numbers such that $|a_n|,|b_n|=O(n^{\sigma})$ for some $\sigma\in\R$. Assume that:
\begin{enumerate}
\item For primitive characters $\psi$ of conductor $q\in\mathcal{P}$ the functions, $\Lambda_f(s,\psi)$ and $\Lambda_g(s,\psi)$ continue to entire functions,
\item If $\epsilon=1$ then $\Lambda_f(s)$ and $\Lambda_g(s)$ continue to entire functions,
\item If $\epsilon = 0$ and $\nu\neq 0$ then $\Lambda_f(s)$ and $\Lambda_g(s)$ continue to meromorphic functions on $\mathbb{C}$ with at most simple poles in the set $\{1\pm\nu,\pm\nu\}$,
\item  If $\epsilon = 0$ and $\nu= 0$ then $\Lambda_f(s)$ and $\Lambda_g(s)$ continue to meromorphic functions on $\mathbb{C}$ with at most double poles in the set $\{0,1\}$,
\end{enumerate}
and, for all primitive characters $\psi$ of conductor $q\in\mathcal{P}\cup\{1\}$, the functions $\Lambda_f(s,\psi),\Lambda_g(s,\psi)$ are uniformly bounded on every vertical strip outside of a small neighbourhood around each pole and satisfy the functional equation~\eqref{eq:FE}. For $n\geq0$, define $a_{-n}=(-1)^{\epsilon}a_n$, $b_{-n}=(-1)^{\epsilon}b_n$.  If $\nu\neq0$, define $f_0(z),g_0(z)$ as in equation~\eqref{eq:constneq0}. If $\nu=0$, define
\begin{align}\label{eq:consteq0}
f_0(z)&= -\Res_{s=0}\Lambda_f(s) y^{\frac12} + \Res_{s=0}s\Lambda_f(s) y^{\frac12}\log y,\\
g_0(z)&=-\Res_{s=0}\Lambda_g(s) y^{\frac12} + \Res_{s=0}s\Lambda_g(s) y^{\frac12}\log y.
\end{align}
If $f(z),g(z)$ are defined as in equation~\eqref{eq:FWseries}, then $f(z),g(z)$ define weight 0 Maass forms on $\Gamma_0(N)$ of parity $\epsilon$, nebentypus $\chi$ (resp. $\overline{\chi}$) and eigenvalue $\frac{1}{4}-\nu^2$. Furthermore $f(z) = g(-1/Nz)$ for all $z\in\HH$.
\end{theorem}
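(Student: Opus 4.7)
The plan is to follow Weil's original argument, adapted to the real-analytic setting via the ``two circles'' technique. First I would verify that the series defining $\tilde{f}$ and $\tilde{g}$ converge absolutely and uniformly on compact subsets of $\HH$, using the polynomial bound on $a_n,b_n$ together with the exponential decay of $W_\nu(ny)$. Since each term $W_\nu(ny)e(nx)$, $y^{1/2\pm\nu}$ and $y^{1/2}\log y$ is an eigenfunction of the Laplace--Beltrami operator with eigenvalue $\tfrac14-\nu^2$, the functions $f,g$ automatically satisfy the correct eigenvalue equation; invariance under $z\mapsto z+1$ is built into the Fourier expansion. What remains is the Fricke relation $f(z)=g(-1/Nz)$ and the transformation law under $\Gamma_0(N)$.

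The Fricke relation is first established on the positive imaginary axis by inverse Mellin transform. The shifted Mellin transform of $W_\nu$ produces the gamma factors appearing in \eqref{eq:Lambda}, so up to normalisation $\tilde f(iy)$ equals $\widetilde{\mathcal{M}}^{-1}(\Lambda_f)(y)$; shifting the contour past the allowed poles in $\{\pm\nu,1\pm\nu\}$ (or $\{0,1\}$ when $\nu=0$), invoking the functional equation \eqref{eq:FE} for $\psi=\mathbf{1}$, and re-identifying the transformed integral in terms of $\widetilde{\mathcal{M}}^{-1}(\Lambda_g)$ evaluated at $1/(Ny)$ yields $f(iy)=g(i/(Ny))$, with the residue contributions matching the constant terms $f_0,g_0$ of \eqref{eq:constneq0} and \eqref{eq:consteq0}. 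Uniform boundedness on vertical strips legitimises the contour shift. To promote this identity from the imaginary axis to all of $\HH$, I use that both sides are real-analytic solutions of the same second-order elliptic eigenvalue equation sharing the same parity under $z\mapsto -\bar z$, so that Cauchy data on the non-characteristic imaginary axis determines each uniquely.

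The twisted relations follow the same script. For each primitive character $\psi$ of conductor $q\in\mathcal{P}$, Mellin inversion of \eqref{eq:FE} yields an identity between twisted Fourier--Whittaker sums $\tilde{f}_\psi$ and $\tilde{g}_{\bar\psi}$ on $i\R_{>0}$, which extends to $\HH$ by the same PDE argument. Expanding $\psi(n)=\tau(\bar\psi)^{-1}\sum_{a\bmod q}\bar\psi(a)e(an/q)$ and similarly for $\bar\psi$, these identities rewrite as equations of the form $f|\gamma=\chi(d) f$ for specific $\gamma=\sabcd{a}{b}{c}{d}\in\SL_2(\R)$ with $c$ a multiple of $N$ and entries controlled by $q$ and by the choice of $a\bmod q$.

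The main obstacle is upgrading this restricted collection of modular relations to full invariance under $\Gamma_0(N)$. Weil's device is to find, for each generator $\gamma$ of $\Gamma_0(N)$, an infinite-order elliptic element of $\SL_2(\R)$ leaving the defect $F_\gamma := f - \chi(d)^{-1} f|\gamma$ invariant; in the holomorphic weight-$k$ setting such invariance alone forces $F_\gamma\equiv 0$. In the real-analytic weight-$0$ setting this fails, as non-constant eigenfunctions of the Laplacian invariant under a single elliptic element exist. The replacement is Theorem~\ref{TwoCircles.theorem}: a continuous function invariant under two infinite-order elliptic elements with distinct fixed points in $\HH$ is necessarily constant. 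The hard part is therefore to construct, for each generator $\gamma$, a \emph{pair} of such elliptic matrices with distinct fixed points fixing $F_\gamma$. This is where the density hypothesis on $\mathcal{P}$ is critical: by varying $q$ over primes in suitably chosen arithmetic progressions one obtains enough twisted relations to manufacture two elliptic matrices with distinct fixed points. Once $F_\gamma$ is shown to be constant, vanishing of the constant Fourier coefficient (after subtracting the explicit constant terms) forces $F_\gamma\equiv 0$. Invariance of $g$ under $\Gamma_0(N)$ with nebentypus $\bar\chi$ then follows by conjugating the transformation law for $f$ through the established Fricke relation.
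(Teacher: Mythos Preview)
Your high-level strategy matches the paper's: Mellin inversion to get the Fricke relation, twisted functional equations to produce invariance under elliptic matrices, and Theorem~\ref{TwoCircles.theorem} to conclude. The substantive difference is your extension step. The paper never restricts to the imaginary axis; instead it computes the Mellin transform of $\tilde f(\alpha+wy+iy)$ along every ray through a rational basepoint (Proposition~\ref{prop:Mellin-transform}), and the resulting identity already lives on all of $\HH$. The hypergeometric factor $\hyp{\frac{s+\epsilon+\nu}{2}}{\frac{s+\epsilon-\nu}{2}}{\frac12+\epsilon}{-w^2}$ and the Euler identity \eqref{eqn:euler identity} do all the work of propagating the functional equation off the axis.

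Your PDE-uniqueness alternative is legitimate in principle, but as written it has a real gap in the odd case. When $\epsilon=1$ (or, more generally, whenever the twist $f_\psi$ has odd parity, which happens for half of the characters $\psi$), the function vanishes identically on $i\R_{>0}$, so $\widetilde{\mathcal M}(\tilde f(iy))$ carries no information and your claimed identification with $\Lambda_f(s)$ fails. Parity gives you $f|_{x=0}=0$ but not $\partial_x f|_{x=0}$, so you do not have Cauchy data from the argument you wrote. The standard fix is to Mellin-transform $\partial_x\tilde f(iy)$ in the odd case; you would then need to check that the contour shift and residue computation still match $f_0,g_0$, and that the Holmgren-type uniqueness genuinely globalises. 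None of this is in the proposal.

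Two smaller points. First, the sentence ``these identities rewrite as equations of the form $f|\gamma=\chi(d)f$'' overstates what the twisted functional equations give you. What one actually obtains (cf.\ equations \eqref{eq:average transformation equation}--\eqref{eq.1-M}) is a relation of the shape $(g|A_+-\chi(q)g)|(1-M(q,s,r))=0$ for an explicit elliptic $M(q,s,r)$, built from \emph{two} primes $q,s\in\mathcal P$ with $qs\equiv 1\pmod{N}$; isolating a single $\gamma$ requires the linear-independence-of-characters step and the algebra around \eqref{eq.qs}--\eqref{eq.sq multiplied}. Second, once two circles forces $F_\gamma$ to be constant, the clean way to conclude $F_\gamma=0$ is not via a Fourier-coefficient argument (since $F_\gamma$ need not be $1$-periodic) but via the eigenvalue: $F_\gamma$ is a Laplace eigenfunction with eigenvalue $\tfrac14-\nu^2>0$, whereas constants have eigenvalue $0$.
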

The above differs from the converse theorem in \cite{MSSU}, as it requires only primitive twists. 
From now on assume that $a_n$, $b_n$ are sequences as in Theorem \ref{thm:EntireConverse}. We define $f$, $g$ and the twisted $L$-functions associated to $f$ and $g$ as in Theorem~\ref{thm:PolarConverse}.
\subsection{Additive Twists}
Let $q\in\mathcal{P}$ be a prime such that $(q,N)=1$ and set $\alpha=\frac{a}{q}\in\mathbb{Q}$ for some $a\in\mathbb{Z}$. We use the following notation as in \cite{BCK}. 
\begin{definition}\label{AddTwists.definition}
Let $k\in\mathbb{Z}_{\geq0}$. For $\alpha\in\mathbb{Q}^{\times}$, the \textsl{additive twists} of $L_f(s)$ by $\alpha$ are
\begin{align}\label{eq:CosTwist}
L_{f}\left(s,\alpha,\cos^{(k)}\right) =& \sum_{n=1}^{\infty} \cos^{(k)}(2\pi n\alpha) a_n n^{-s}.
\end{align}
\end{definition}
Up to sign, Definition~\ref{AddTwists.definition} depends only on $k$ modulo 2.
\begin{lemma}\label{lem:Lcos and Lsin via twists}
We have
\begin{align*}
L_f\left(s,\alpha,\cos^{(k)}\right)=&\frac{i^k}{q-1}\sum_{\substack{\psi~(\modulo q)\\\psi\neq\psi_0\\\psi(-1)=(-1)^k}}\tau(\overline{\psi})\psi(a) L_f(s,\psi)\\
&+\begin{cases}
(-1)^{k/2}[L_f(s)-\frac{q}{q-1}L_f(s,\psi_0)], & k\text{ even,}\\
0, & k\text{ odd},
\end{cases}
\end{align*}
where the sums are over Dirichlet characters modulo $q$, and $\psi_0$ denotes the trivial Dirichlet character mod $q$.
\end{lemma}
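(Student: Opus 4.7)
The plan is to expand $\cos^{(k)}(2\pi n\alpha)$ into exponentials, apply the orthogonality of Dirichlet characters modulo $q$ to express $e(\pm na/q)$ as a linear combination of $\psi(n)$'s, and then carefully isolate the contribution of the trivial character and the terms with $q\mid n$.

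First, I use $\cos^{(k)}(x) = \tfrac12(i^k e^{ix}+(-i)^k e^{-ix})$, which gives
\[
\cos^{(k)}(2\pi n\alpha) = \tfrac12\bigl(i^k e(na/q)+(-i)^k e(-na/q)\bigr).
\]
Since $q$ is prime, every non-trivial character mod $q$ is primitive, so that for every character $\psi$ mod $q$ one has $\sum_{b=1}^{q-1}\psi(b)e(nb/q)=\bar\psi(n)\tau(\psi)$ (using $\tau(\psi_0)=-1$). Standard orthogonality then yields, for $\gcd(n,q)=1$,
\[
e(na/q) = \frac{1}{q-1}\sum_{\psi\, (\modulo q)}\tau(\bar\psi)\psi(a)\psi(n),
\]
after the substitution $\psi\mapsto\bar\psi$. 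Using $\bar\psi(-1)=\psi(-1)\in\{\pm1\}$, the corresponding expression for $e(-na/q)$ differs only by a factor $\psi(-1)$ inside the sum.

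Substituting both expressions into $\cos^{(k)}(2\pi na/q)$, the weighted sum over characters collapses: the coefficient $i^k+(-i)^k\psi(-1)=i^k(1+(-1)^k\psi(-1))$ equals $2i^k$ when $\psi(-1)=(-1)^k$ and $0$ otherwise. Hence, for $\gcd(n,q)=1$,
\[
\cos^{(k)}(2\pi na/q) = \frac{i^k}{q-1}\sum_{\substack{\psi\,(\modulo q)\\ \psi(-1)=(-1)^k}}\tau(\bar\psi)\psi(a)\psi(n).
\]
For $n$ with $q\mid n$, the value is simply $\cos^{(k)}(0)$, which equals $(-1)^{k/2}$ for $k$ even and $0$ for $k$ odd.

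Multiplying by $a_n n^{-s}$ and summing over $n\geq1$, the terms with $\gcd(n,q)=1$ produce precisely $\frac{i^k}{q-1}\sum_\psi\tau(\bar\psi)\psi(a)L_f(s,\psi)$, summed over characters $\psi$ mod $q$ with the parity $\psi(-1)=(-1)^k$. For $k$ odd the trivial character never satisfies this parity and the $q\mid n$ contribution vanishes, so one gets exactly the stated primitive character sum. For $k$ even, the trivial character does appear; its contribution is $\frac{i^k}{q-1}\tau(\psi_0)L_f(s,\psi_0)=-\frac{(-1)^{k/2}}{q-1}L_f(s,\psi_0)$, and combining with the $q\mid n$ terms $(-1)^{k/2}\sum_{q\mid n}a_n n^{-s}=(-1)^{k/2}[L_f(s)-L_f(s,\psi_0)]$ gives, after collecting, $(-1)^{k/2}[L_f(s)-\tfrac{q}{q-1}L_f(s,\psi_0)]$, matching the claimed formula.

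There is no real obstacle here; the lemma is a clean application of Dirichlet orthogonality and the Gauss sum identity, and the only subtlety is the careful bookkeeping of the trivial character and the $q\mid n$ terms when $k$ is even to produce the exact constant $\frac{q}{q-1}$.
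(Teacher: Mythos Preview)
Your proof is correct and follows essentially the same approach as the paper: the paper's proof simply records the two identities for $\cos(2\pi na/q)$ and $\sin(2\pi na/q)$ in terms of character sums, while you derive those identities from the exponential decomposition and Dirichlet orthogonality, with identical bookkeeping of the trivial character and the $q\mid n$ terms.
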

\begin{proof}
This follows from
\begin{align*}
\cos\left(\frac{2\pi n a}{q}\right) &= 1 - \frac{q}{q-1}\psi_0(n) + \frac{1}{q-1}\sum_{\substack{\psi~(\text{mod q})\\\psi\neq\psi_0\\\psi(-1)=1}}\tau(\overline{\psi})\psi(an),\\
\sin\left(\frac{2\pi n a}{q}\right) &= -\frac{i}{q-1}\sum_{\substack{\psi~(\text{mod q})\\\psi(-1)=-1}}\tau(\overline{\psi})\psi(an) .
\end{align*}
\end{proof}
Define
\begin{equation}\label{eq:gamma}
\gamma_f^{(-)^k}(s)=\Gamma_\R\left(s+[k+\epsilon]+\nu\right)\Gamma_\R\left(s+[k+\epsilon]-\nu\right),
\end{equation}
where $(-)^k$ denotes $+$ if $k$ is even and $-$ if $k$ is odd. As explained in the introduction, for $m\in\Z$ we write $[m]$ for the element in $\{0,1\}$ with the same parity as $m$. We see that $\Lambda_f(s)=\gamma_f^+(s)L_f(s)$.
We define the completion of the additive twists by
\begin{align}\label{eq:addcompletions}
\begin{split}
&\Lambda_f\left(s,\alpha,\cos^{(k)}\right)=
\gamma_f^{(-)^k}(s)L_f\left(s,\alpha,\cos^{(k)}\right)\\
&=
\frac{i^k}{q-1}\sum_{\substack{\psi~(\modulo q)\\\psi\neq\psi_0\\\psi(-1)=(-1)^k}}\tau(\overline{\psi})\psi(a)\Lambda_f(s,\psi)
+\begin{cases}
(-1)^{k/2}[\Lambda_f(s)-\frac{q}{q-1}\Lambda_f(s,\psi_0)], & k\text{ even,}\\
0, & k\text{ odd}.
\end{cases}
\end{split}
\end{align}
\begin{proposition}\label{prop: SinCosFE} 
The additive twists satisfy the following functional equations
\begin{align*}
\Lambda_f\left(s,\alpha,\cos^{(k)}\right)=&(-1)^\epsilon\frac{i^k(q^2N)^{\frac12-s}\chi(q)}{q-1}\sum_{\substack{\psi~(\text{mod }q)\\\psi\neq\psi_0\\\psi(-1)=(-1)^k}}\psi(Na)\tau(\psi)\Lambda_g(1-s,\overline{\psi})\\
&+\begin{cases}
(-1)^{k/2}[\Lambda_f(s)-\frac{q}{q-1}\Lambda_f(s,\psi_0)], & k\text{ even,}\\
0, & k\text{ odd.}
\end{cases}
\end{align*}
\end{proposition}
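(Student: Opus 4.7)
The proof is essentially a direct substitution and bookkeeping exercise: I would start from the expression \eqref{eq:addcompletions} for $\Lambda_f(s,\alpha,\cos^{(k)})$ and substitute the assumed twisted functional equation \eqref{eq:FE} into each summand indexed by a non-trivial character $\psi$ modulo $q$. Since $q\in\mathcal{P}$ is prime and $\psi\neq\psi_0$, every such $\psi$ is automatically primitive of conductor $q$, so \eqref{eq:FE} applies and yields
\[
\Lambda_f(s,\psi)=(-1)^{\epsilon}\psi(N)\chi(q)\frac{\tau(\psi)}{\tau(\overline{\psi})}(q^2N)^{\frac12-s}\Lambda_g(1-s,\overline{\psi}).
\]
Inserting this, the factor $\tau(\overline{\psi})$ that appears in \eqref{eq:addcompletions} cancels the $\tau(\overline{\psi})$ in the denominator, leaving $\tau(\psi)$; combining $\psi(a)\psi(N)=\psi(Na)$ and pulling the $s$-independent quantities $(-1)^{\epsilon}$, $\chi(q)$, and $(q^2N)^{\frac12-s}$ outside the sum, one recovers exactly the displayed sum over non-trivial $\psi$ in the claimed identity.

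The only remaining point is to observe that the ``boundary'' contribution, namely $(-1)^{k/2}\bigl[\Lambda_f(s)-\tfrac{q}{q-1}\Lambda_f(s,\psi_0)\bigr]$ when $k$ is even (and $0$ when $k$ is odd), is not touched by this substitution, since the functional equation is only applied to the non-trivial characters in the sum; this term therefore appears unchanged on both sides of the proposition. Under the parity constraints built into the sum, $k$ even forces $\psi(-1)=1$ and $k$ odd forces $\psi(-1)=-1$, so the ranges of summation in \eqref{eq:addcompletions} and in the claim match term by term. There is no real obstacle here—the statement is essentially bilinearity of the twisted functional equation \eqref{eq:FE} against the character expansion of $\cos(2\pi na/q)$ and $\sin(2\pi na/q)$ used in Lemma~\ref{lem:Lcos and Lsin via twists}—so the proof is a one-line verification once the Gauss sum and character identities are tracked carefully.
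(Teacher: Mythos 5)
Your proposal is correct and matches the paper's (implicit) argument: the paper gives no separate proof precisely because the proposition follows by substituting the assumed functional equation \eqref{eq:FE} into each non-trivial-character term of \eqref{eq:addcompletions}, with the Gauss sums cancelling as you describe and the principal/trivial terms left untouched. Your observations that every non-trivial character modulo a prime $q\in\mathcal{P}$ is primitive and that the parity condition $\psi(-1)=(-1)^k$ aligns the gamma factors are exactly the points that make this a one-line verification.
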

For the following Lemma, recall the (Gauss) hypergeometric function $_2F_1$ which is reviewed in Appendix~\ref{Hypergeometric.subsection}.
\begin{lemma}[6.699(3-4) in \cite{IntegralTables}]\label{Lem:Mellin}
Let $w\in\mathbb{R}$, $k\geq0$, and $\nu\in\mathbb{C}$. One has, for $\Re(-s\pm\nu)<\epsilon$,
\[
4\int_0^{\infty}K_{\nu}(2y)\cos^{(k)}(2wy)y^s\frac{dy}{y}
=i^k(2w)^{[k]}\pi^{s}\gamma_f^{(-)^{k+\epsilon}}(s)\hyp{\frac{s+\nu+[k]}{2}}{\frac{s-\nu+[k]}{2}}{\frac12+[k]}{-w^2}.
\]
\end{lemma}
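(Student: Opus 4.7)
The plan is to reduce the identity to the two classical Mellin transforms of $K_{\nu}$ against $\cos$ and $\sin$, which appear as formulas 6.699(3) and 6.699(4) in \cite{IntegralTables}. Since $\cos^{(k)}(x)=\cos(x+k\pi/2)$, we have $\cos^{(k)}=i^{k}\cos$ when $k$ is even and $\cos^{(k)}=i^{k+1}\sin$ when $k$ is odd. Substituting this into the left-hand side splits the claim into two sub-cases indexed by $[k]\in\{0,1\}$: the $[k]=0$ case is the cosine Mellin transform of $K_{\nu}(2y)$, and the $[k]=1$ case is the sine transform. Both are contained in \cite{IntegralTables} after the substitutions $a=2w$, $b=2$, $\mu=s$, and translating between $\Gamma$ and $\Gamma_{\R}(s)=\pi^{-s/2}\Gamma(s/2)$ then completes the match.

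For a self-contained derivation, I would expand the trigonometric factor as a power series in $w$ and integrate term by term. The interchange of sum and integral is justified by dominated convergence in the natural strip of $s$, using the asymptotic $K_{\nu}(2y)\sim Cy^{-|\Re\nu|}$ as $y\to 0^{+}$ together with the rapid exponential decay at infinity. Each term then reduces to the classical Mellin transform of $K_{\nu}$,
\[
\int_{0}^{\infty}K_{\nu}(y)\,y^{\sigma-1}\,dy \;=\; 2^{\sigma-2}\,\Gamma\!\left(\tfrac{\sigma+\nu}{2}\right)\Gamma\!\left(\tfrac{\sigma-\nu}{2}\right),
\]
evaluated at $\sigma=s+2m+[k]$. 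Applying Legendre's duplication formula to the factorials in the Taylor coefficients of $\cos$ or $\sin$ rewrites the denominators in terms of the Pochhammer symbol $(\tfrac12+[k])_{m}$, and the resulting sum assembles into $\Gamma((s+\nu+[k])/2)\,\Gamma((s-\nu+[k])/2)$ times ${}_{2}F_{1}\!\left((s+\nu+[k])/2,(s-\nu+[k])/2;\tfrac12+[k];-w^{2}\right)$, as required.

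The main obstacle is not analytic --- convergence, interchange of sum and integral, and analytic continuation are all routine --- but purely combinatorial: tracking the signs $i^{k}$, the powers $(2w)^{[k]}$, and the half-integer shifts $[k]$ and $\tfrac12+[k]$ uniformly across both parities of $k$, and reconciling them with the compact $\gamma_f^{(-)^{k+\epsilon}}$ notation used in the statement. Once this bookkeeping is settled, the identity holds first in the strip where both sides converge absolutely (the condition $\Re(-s\pm\nu)<[k]$ stated in the lemma), and then extends to the common domain of meromorphy by analytic continuation.
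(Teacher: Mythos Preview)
Your proposal is correct and matches the paper's treatment: the paper does not prove this lemma at all but simply cites formulas 6.699(3)--(4) of \cite{IntegralTables}, and your first paragraph is precisely that reduction---splitting on the parity $[k]$ and identifying the two cases with the cosine and sine Mellin transforms of $K_\nu$ tabulated there. Your second paragraph, the self-contained derivation via term-by-term integration of the Taylor series and the classical Mellin transform of $K_\nu$, goes beyond what the paper offers and is a standard and correct way to recover the Gradshteyn--Ryzhik formulas; the bookkeeping you flag (unwinding $\gamma_f^{(-)^{k+\epsilon}}$ to $\Gamma_\R(s+[k]+\nu)\Gamma_\R(s+[k]-\nu)$ via $[k+2\epsilon]=[k]$) is exactly the point.
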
 
\begin{proposition}\label{prop:Mellin-transform}
Let $h(z)=(-i)^\epsilon\sum_{n= 1}^\infty \frac{c_n}{\sqrt{n}} W_{\nu}(ny)\cos^{(\epsilon)}(2\pi nx)$ with polynomially bounded $c_n$, $\epsilon\in\{0,1\}$, $w\in\R$ and $\alpha\in\Q$. Define $\Lambda_h(s)$ and its twists just as in \eqref{eq:Lambda} and \eqref{AddTwists.definition} with $a_n$ replaced by $c_n$. One has
\begin{multline}\label{eq:Mellin-transform}
\int_0^{\infty}h(iy+wy+\alpha)y^{s-\frac{1}{2}}\frac{dy}{y}\\
=\sum_{j\in\{0,1\}}i^{-j}(2w)^{[j+\epsilon]}\Lambda_h\left(s,\alpha,\cos^{(j)}\right)\hyp{\frac{s+\nu+[j+\epsilon]}{2}}{\frac{s-\nu+[j+\epsilon]}{2}}{\frac12+[j+\epsilon]}{-w^2}.
\end{multline}
\end{proposition}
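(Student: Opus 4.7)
The plan is to expand $h$ via its Fourier--Whittaker series, separate the dependence on $wy$ from that on $\alpha$ using the angle-addition formula for $\cos^{(\epsilon)}$, interchange sum and integral, and then apply Lemma~\ref{Lem:Mellin} termwise. Since the point $z = iy+wy+\alpha$ has imaginary part $y$ and real part $wy+\alpha$, the Fourier expansion of $h$ reads
\[
h(iy+wy+\alpha) = (-i)^\epsilon \sum_{n=1}^\infty \frac{c_n}{\sqrt n}\,W_\nu(ny)\,\cos^{(\epsilon)}\!\bigl(2\pi n(wy+\alpha)\bigr).
\]
Starting from $\cos^{(\epsilon)}(A)=\cos(A+\epsilon\pi/2)$ and the standard addition formula one derives the identity
\[
\cos^{(\epsilon)}(A+B)=\sum_{j\in\{0,1\}}(-1)^j\cos^{(\epsilon+j)}(A)\cos^{(j)}(B),
\]
which with $A=2\pi nwy$, $B=2\pi n\alpha$ isolates the $\alpha$-dependence into the factor $\cos^{(j)}(2\pi n\alpha)$.

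For $\Re s$ sufficiently large, the polynomial bound on $c_n$ together with the exponential decay of $W_\nu(u)$ as $u\to\infty$ and its behaviour $W_\nu(u)=O(u^{1/2-|\Re\nu|})$ as $u\to 0^+$ allow Fubini to apply, so sum and integral may be interchanged. For each $n\geq 1$ and each $j\in\{0,1\}$, the substitution $u=ny$ extracts a factor $n^{1/2-s}$ and reduces the remaining integral to
\[
4\int_0^\infty K_\nu(2\pi u)\,\cos^{(\epsilon+j)}(2\pi wu)\,u^s\,\frac{du}{u}.
\]
A further rescaling $t=\pi u$ brings this into the form of Lemma~\ref{Lem:Mellin} with $k=\epsilon+j$, producing the factor $i^{\epsilon+j}(2w)^{[\epsilon+j]}\gamma_f^{(-)^{(\epsilon+j)+\epsilon}}(s)\,{}_2F_1(\cdots)$, the $\pi^{-s}$ from the change of variables cancelling the $\pi^s$ emitted by the lemma.

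All that remains is bookkeeping. By Definition~\ref{AddTwists.definition}, $\sum_{n\geq 1} c_n\cos^{(j)}(2\pi n\alpha)\,n^{-s}=L_h(s,\alpha,\cos^{(j)})$; and since adding $2\epsilon$ to an exponent of $-1$ is trivial, \eqref{eq:gamma} gives $\gamma_f^{(-)^{(\epsilon+j)+\epsilon}}(s)=\gamma_f^{(-)^j}(s)$. Multiplying yields $\Lambda_h(s,\alpha,\cos^{(j)})$ through the completion \eqref{eq:addcompletions}. Collecting the remaining constants via $(-i)^\epsilon\cdot i^\epsilon=1$ and $(-1)^j\cdot i^j=(-i)^j=i^{-j}$ leaves a scalar prefactor of $i^{-j}(2w)^{[j+\epsilon]}$ on the $j$-th term, and summing over $j\in\{0,1\}$ produces the claimed identity \eqref{eq:Mellin-transform}.

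The argument is essentially a direct calculation, so the principal obstacle is purely combinatorial: faithfully tracking the signs, the parity brackets $[\epsilon+j]$, and the powers of $i$ and $\pi$, and in particular verifying that the superscript in $\gamma_f^{(-)^{(\epsilon+j)+\epsilon}}$ collapses through the convention in \eqref{eq:gamma} so that the correct completion $\Lambda_h(s,\alpha,\cos^{(j)})$ emerges with the correct hypergeometric arguments $[j+\epsilon]$.
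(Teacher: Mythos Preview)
Your proof is correct and follows essentially the same route as the paper's: expand $h$ as a Fourier--Whittaker series, split $\cos^{(\epsilon)}(2\pi n(wy+\alpha))$ via the addition formula, rescale the $y$-integral to match Lemma~\ref{Lem:Mellin} with $k=\epsilon+j$, and then assemble the constants. Your explicit justification of Fubini and your tracking of the collapse $\gamma_f^{(-)^{(\epsilon+j)+\epsilon}}=\gamma_f^{(-)^j}$ and of $(-i)^\epsilon(-1)^j i^{\epsilon+j}=i^{-j}$ are exactly the bookkeeping steps the paper leaves implicit.
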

\begin{proof}
The result follows readily from the following computation which uses Lemma~\ref{Lem:Mellin}:
\begin{align*}
&\int_0^{\infty}h(iy+wy+\alpha)y^{s-\frac{1}{2}}\frac{dy}{y}
\\
&=4(-i)^{\epsilon}\sum_{n=1}^{\infty} c_n \int_0^{\infty} K_{\nu}(2\pi n y)\cos^{(\epsilon)}(2\pi n(wy+\alpha))y^{s}\frac{dy}{y}\\
&=4(-i)^{\epsilon}\sum_{n=1}^{\infty} c_n \sum_{j\in\{0,1\}}(-1)^{j}\cos^{(j)}(2\pi n \alpha)\int_0^{\infty} K_{\nu}(2\pi n y)\cos^{(\epsilon+j)}(2\pi n wy)y^{s}\frac{dy}{y}\\
&=4(-i)^{\epsilon}\pi^{-s}\sum_{n=1}^{\infty} c_n n^{-s} \sum_{j\in\{0,1\}}(-1)^{j}\cos^{(j)}(2\pi n \alpha)\int_0^{\infty} K_{\nu}(2 y)\cos^{(\epsilon+j)}(2 wy)y^{s}\frac{dy}{y}\\
&=\sum_{n=1}^{\infty} c_n n^{-s} \sum_{j\in\{0,1\}}i^{j}\cos^{(j)}(2\pi n \alpha)
(2w)^{[j+\epsilon]}\gamma_f^{(-)^j}(s)\hyp{\frac{s+\nu+[j+\epsilon]}{2}}{\frac{s-\nu+[j+\epsilon]}{2}}{\frac12+[j+\epsilon]}{-w^2}.
\end{align*}
\end{proof}
\subsection{Transformation properties from the functional equation of $\Lambda_f$}\label{subsection:transformation props from FE}
We first show that $f(z)=g(-1/Nz)$ follows from the functional equation of $\Lambda_f(s)$. Define $\tilde{f},\tilde{g}$ by equation \eqref{eq:tilde}. Let $z=wy+iy\in\mathcal{H}$ with $w\in\mathbb{R}_{\geq0}$. If $c>1+|\Re\nu|$, then by Proposition~\ref{prop:Mellin-transform} for $\alpha=0$ we can obtain $\tilde{f}(wy+iy)$ as an inverse Mellin transform.
\begin{equation}\label{eq:relating f and g(-1/Nz)}
\tilde{f}(wy+iy)=\frac{(2 w)^{\epsilon}}{2\pi i}\int_{(c)}\Lambda_f(s)\hyp{\frac{s+\epsilon+\nu}{2}}{\frac{s+\epsilon-\nu}{2}}{\frac{1}{2}+\epsilon}{-w^2}y^{\frac12-s}ds.
\end{equation}
Shifting the path of integration to the left this equals
\[
\frac{(2w)^{\epsilon}}{2\pi i}\int_{(1-c)}\Lambda_f(s)\hyp{\frac{s+\epsilon+\nu}{2}}{\frac{s+\epsilon-\nu}{2}}{\frac{1}{2}+\epsilon}{-w^2}y^{\frac12-s}ds+H(z),
\]
where
\begin{align*}
H(z) =
\begin{cases}
0,&\epsilon=1,\\
\sum_{x\in\{1\pm\nu,\pm\nu\}}\Res_{s=x}\hyp{\frac{s+\nu}{2}}{\frac{s-\nu}{2}}{\frac12}{-w^2}\Lambda_f(s)y^{\frac12-s}, & \epsilon=0.
\end{cases}
\end{align*}
Now we apply the functional equation to $\Lambda_f(s)$ and the Euler identity \eqref{eqn:euler identity} to obtain
\begin{align*}
\tilde{f}(wy+iy)=&\frac{(2w)^{\epsilon}}{2\pi i}\int_{(c)}\Lambda_f(1-s)\hyp{\frac{1-s+\epsilon+\nu}{2}}{\frac{1-s+\epsilon-\nu}{2}}{\frac{1}{2}+\epsilon}{-w^2}y^{s-\frac12}ds+H(z)\\
=&\frac{(2w)^{\epsilon}}{2\pi i}\int_{(c)}(-1)^{\epsilon}N ^{s-\frac12}\Lambda_g(s)(1+w^2)^{s-\frac12}\hyp{\frac{s+\epsilon+\nu}{2}}{\frac{s+\epsilon-\nu}{2}}{\frac{1}{2}+\epsilon}{-w^2}y^{s-\frac12}ds+H(z)\\
=&\frac{(-2w)^{\epsilon}}{2\pi i}\int_{(c)}\Lambda_g(s)\hyp{\frac{s+\epsilon+\nu}{2}}{\frac{s+\epsilon-\nu}{2}}{\frac{1}{2}+\epsilon}{-w^2}(N(1+w^2)y)^{s-\frac12}ds+H(z)\\
=&\tilde{g}\left(-\frac{1}{N(wy+iy)}\right)+H(z).
\end{align*}

If $\epsilon=1$, then this shows $f(z)=g(-1/Nz)$. Let $\epsilon=0$ and $\nu\neq 0$. The hypergeometric functions occuring in $H(z)$ can be evaluated directly\footnote{http://dlmf.nist.gov/15.4.6}:
\begin{align*}
\hyp{\frac12\pm\nu}{\frac12}{\frac12}{-w^2} =(1+w^2)^{-\frac12\mp\nu},~~
\hyp{\nu}{0}{\frac12}{-w^2}=\hyp{0}{-\nu}{\frac12}{-w^2} =1.
\end{align*}
So
\begin{multline*}
H(z) = ((1+w^2)y)^{-\frac12-\nu}\Res_{s=1+\nu}\Lambda_f(s)+
((1+w^2)y)^{-\frac12+\nu}\Res_{s=1-\nu}\Lambda_f(s)\\
+y^{\frac12-\nu}\Res_{s=\nu}\Lambda_f(s)
+y^{\frac12+\nu}\Res_{s=-\nu}\Lambda_f(s)\\
=\Res_{s=\nu}\Lambda_f(s)y^{\frac12-\nu}+\Res_{s=-\nu}\Lambda_f(s)y^{\frac12+\nu}+N^{\frac12+\nu}\Res_{s=1+\nu}\Lambda_g(s)\Im\left(-\frac{1}{Nz}\right)^{\frac12+\nu} \\
+ N^{\frac12-\nu}\Res_{s=1-\nu}\Lambda_g(s)\Im\left(-\frac{1}{Nz}\right)^{\frac12-\nu}.
\end{multline*}
It follows that $f(z) = g(-1/Nz)$ in this case. Now let $\epsilon=0$ and $\nu=0$. The first term in $H(z)$ is
\begin{multline*}
\Res_{s=0}\hyp{\frac{s}{2}}{\frac{s}{2}}{\frac12}{-w^2}\Lambda_f(s)y^{\frac12-s}=\lim_{s\rightarrow 0}\frac{d}{ds}\left(s^2\hyp{\frac{s}{2}}{\frac{s}{2}}{\frac12}{-w^2}\Lambda_f(s)y^{\frac12-s}\right)\\
=\lim_{s\rightarrow0}\frac{d}{ds}\hyp{\frac{s}{2}}{\frac{s}{2}}{\frac12}{-w^2}y^{\frac12-s}\left(a_0'-s a_0+s^2E_0(s)\right),
\end{multline*}
where $E_0(s)$ is holomorphic at $s=0$. To evaluate the limit, expand the hypergeometric function around $s=0$:
\begin{equation}
\hyp{\frac{s}{2}}{\frac{s}{2}}{\frac12}{-w^2}=\sum_{n=0}^{\infty}\frac{(-w^2)^n}{\left(\frac12\right)_nn!}\left(\frac{s}{2}\right)^2_n
=1+s^2F_0(s),
\end{equation}
where $F_0(s)$ is holomorphic at $s=0$. We see that
\begin{equation}\label{eq:res0}
\Res_{s=0}\hyp{\frac{s}{2}}{\frac{s}{2}}{\frac12}{-w^2}\Lambda_f(s)y^{\frac12-s}=\Res_{s=0}\Lambda_f(s) y^{\frac12}-\Res_{s=0}s\Lambda_f(s) y^{\frac12}\log y.
\end{equation}
Using the \eqref{eqn:euler identity} and the functional equation of $\Lambda_f(s)$
\begin{multline*}
\Res_{s=1}\hyp{\frac{s}{2}}{\frac{s}{2}}{\frac12}{-w^2}\Lambda_f(s)y^{\frac12-s}\\
=\Res_{s=1}\left((1+w^2)Ny\right)^{\frac12-s}\hyp{\frac{1-s}{2}}{\frac{1-s}{2}}{\frac12}{-w^2}\Lambda_g(1-s)\\
=-\Res_{s=0}\left((1+w^2)Ny\right)^{s-\frac12}\hyp{\frac{s}{2}}{\frac{s}{2}}{\frac12}{-w^2}\Lambda_g(s)\\
=\Im\left(-\frac{1}{Nz}\right)^{\frac12}\left(-\Res_{s=0}\Lambda_g(s) + \Res_{s=0}s\Lambda_g(s)\log\left(\Im\left(-\frac{1}{Nz}\right)\right) \right).
\end{multline*}
As before we conclude $f(z)=g(-1/Nz)$.

\subsection{Transformation properties from twisted functional equations}\label{Twists.section}
In this section we deduce further transformation properties of $f$ and $g$ from the twisted functional equations. For a primitive Dirichlet character $\psi$ modulo a prime $q$ we write 
\[
f_\psi(z) = \sum_{n\neq 0}\frac{\psi(n)a_n}{2\sqrt{|n|}}W_{\nu}(ny)e(nx)
\]
and define $g_\psi$ analogously. With the definition of Proposition \ref{prop:Mellin-transform} we have $\Lambda_f(s,\psi) = \Lambda_{f_\psi}(s)$.
\begin{lemma}\label{lem:difference}Let $q\in\mathcal{P}$ and let $\alpha = a/q,\beta = b/q$ and let $z=wy+iy\in\HH$. With the assumptions of Theorem~\ref{thm:EntireConverse}, one has
\begin{equation}\label{eqn:difference}
f(z+\alpha)-f(z+\beta) = \frac{\chi(q)}{q-1}
\sum_{\substack{\psi~(\modulo q)\\ \psi\neq\psi_0}}  \psi(-N)\left(\psi(a)-\psi(b)\right)\tau(\psi)g_{\overline{\psi}}\left(-\frac{1}{Nq^2 z}\right).
\end{equation}
\end{lemma}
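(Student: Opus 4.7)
The plan is to express $f(z+\alpha) - f(z+\beta)$ as a contour integral via Proposition \ref{prop:Mellin-transform}, pass from $L_f$-twists to $L_g$-twists through the twisted functional equation \eqref{eq:FE}, and reassemble the integral as the Mellin representation of $g_{\bar\psi}(-1/(Nq^2z))$. Writing $z = wy+iy$, the constant term $f_0$ depends only on $y$ and $\alpha,\beta\in\R$, so $f(z+\alpha) - f(z+\beta) = \tilde f(z+\alpha) - \tilde f(z+\beta)$. For $c$ sufficiently large, Mellin inversion combined with Proposition \ref{prop:Mellin-transform} represents this difference as $\tfrac{1}{2\pi i}\int_{(c)} \sum_{j\in\{0,1\}} i^{-j}(2w)^{[j+\epsilon]}\bigl[\Lambda_f(s,\alpha,\cos^{(j)}) - \Lambda_f(s,\beta,\cos^{(j)})\bigr]\hyp{\frac{s+\nu+[j+\epsilon]}{2}}{\frac{s-\nu+[j+\epsilon]}{2}}{\frac12+[j+\epsilon]}{-w^2}\,y^{\frac12-s}\,ds$. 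By \eqref{eq:addcompletions} the $\alpha,\beta$-independent contributions (those involving $\Lambda_f(s)$ and $\Lambda_f(s,\psi_0)$) cancel, so the bracketed difference collapses to $\frac{i^j}{q-1}\sum_{\psi\neq\psi_0,\,\psi(-1)=(-1)^j}\tau(\bar\psi)(\psi(a)-\psi(b))\Lambda_f(s,\psi)$.

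Next, I will invoke assumption (1) of Theorem \ref{thm:EntireConverse}, which makes each $\Lambda_f(s,\psi)$ entire and uniformly bounded on vertical strips, in order to shift the contour to $\Re s = 1-c$ without encountering residues. Applying the functional equation \eqref{eq:FE} (the combination $\tau(\bar\psi)\cdot \tau(\psi)/\tau(\bar\psi)$ collapses to $\tau(\psi)$) replaces each $\Lambda_f(s,\psi)$ by a multiple of $\Lambda_g(1-s,\bar\psi)$. The substitution $s\mapsto 1-s$ returns the contour to $(c)$, and Euler's identity \eqref{eqn:euler identity} rewrites the hypergeometric as $(1+w^2)^{s-\frac12}\hyp{\frac{s-\nu+[j+\epsilon]}{2}}{\frac{s+\nu+[j+\epsilon]}{2}}{\frac12+[j+\epsilon]}{-w^2}$, producing the overall factor $(Nq^2(1+w^2)y)^{s-\frac12}$.

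Finally, I will regroup the sum by the parity $k$ of $\psi$ (so $j\equiv k\pmod 2$ and $[\epsilon+j] = [\epsilon+k]$), use $\psi(-N) = (-1)^k\psi(N)$ together with the identity $(-1)^k(-1)^{[\epsilon+k]} = (-1)^\epsilon$ to reconcile signs, and observe that $g_{\bar\psi}$ is a Maass form of parity $[\epsilon+k]$. The resulting integrand then matches exactly the one produced by Proposition \ref{prop:Mellin-transform} applied to $g_{\bar\psi}$ (at $\alpha=0$ and $w\mapsto -w$) evaluated at $-w y_1 + iy_1 = -1/(Nq^2z)$, where $y_1 = 1/(Nq^2(1+w^2)y)$; inverting the Mellin transform then yields the right-hand side of \eqref{eqn:difference}. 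The main obstacle is the sign bookkeeping — tracking factors of $i^{-j}\cdot i^j$, relating $\tau(\psi)$ to $\tau(\bar\psi)$, and verifying that the parity $[\epsilon+k]$ of $g_{\bar\psi}$ aligns correctly with the hypergeometric index $[j+\epsilon]$ produced by the functional equation — whereas the contour shift itself is straightforward given the uniform boundedness hypothesis.
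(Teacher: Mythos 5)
Your proposal is correct and follows essentially the same route as the paper: cancel the constant terms, represent $\tilde f(z+\alpha)-\tilde f(z+\beta)$ via Proposition~\ref{prop:Mellin-transform}, use the cancellation of the untwisted and principal-character terms so that only entire primitive twists remain, shift the contour, apply the twisted functional equation with the Euler identity, and reassemble via Proposition~\ref{prop:Mellin-transform} for $g_{\overline{\psi}}$ at $-1/(Nq^2z)$. The sign and parity bookkeeping you flag is exactly what the paper's proof carries out (there packaged through Proposition~\ref{prop: SinCosFE}), so no new idea is missing.
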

\begin{proof}
As $\alpha,\beta\in\mathbb{R}$, we have
\begin{equation}\label{eq:diff}
f(z+\alpha)-f(z+\beta)=\tilde{f}(z+\alpha)-\tilde{f}(z+\beta)\\
\end{equation}
Applying the inverse Mellin transform to Proposition~\ref{prop:Mellin-transform}
\begin{align}\label{eq:inverse mellin transform twisted}
\begin{split}f(z+\alpha)-f(z+\beta)&=\sum_{j\in\{0,1\}}i^{-j}(2 w)^{[j+\epsilon]}\frac{1}{2\pi i}\int_{(c)}\left(\Lambda_f\left(s,\alpha,\cos^{(j)}\right)-\Lambda_f\left(s,\beta,\cos^{(j)}\right)\right)\\
&\qquad\qquad\qquad\qquad\qquad\qquad\qquad\cdot\hyp{\frac{s+\nu+[j+\epsilon]}{2}}{\frac{s-\nu+[j+\epsilon]}{2}}{\frac12+[j+\epsilon]}{-w^2}y^{\frac12-s}ds.
\end{split}\end{align}
By Lemma \ref{lem:Lcos and Lsin via twists}, $\Lambda_f\left(s,\alpha,\cos^{(j)}\right)-\Lambda_f\left(s,\beta,\cos^{(j)}\right)$ is a linear combination of twists of $\Lambda_f(s)$ by characters of conductor $q$. Therefore, by the assumptions of Theorem \ref{thm:EntireConverse}, it is entire and we can shift the path of integration to the left. The integral in equation \eqref{eq:inverse mellin transform twisted} equals
\begin{align*}
&\int_{(1-c)}\left(\Lambda_f\left(s,\alpha,\cos^{(j)}\right)-\Lambda_f\left(s,\beta,\cos^{(j)}\right)\right)\hyp{\frac{s+\nu+[j+\epsilon]}{2}}{\frac{s-\nu+[j+\epsilon]}{2}}{\frac12+[j+\epsilon]}{-w^2}y^{\frac12-s}ds\\
&\quad=\int_{(c)}\left(\Lambda_f\left(1-s,\alpha,\cos^{(j)}\right)-\Lambda_f\left(1-s,\beta,\cos^{(j)}\right)\right)\hyp{\frac{1-s+\nu+[j+\epsilon]}{2}}{\frac{1-s-\nu+[j+\epsilon]}{2}}{\frac12+[j+\epsilon]}{-w^2}y^{s-\frac12}ds\\
&\quad=(-1)^\epsilon\int_{(c)}\frac{i^j(q^2N)^{s-\frac12}\chi(q)}{q-1}
\sum_{\substack{\psi~(\text{mod }q)\\\psi\neq\psi_0\\\psi(-1)=(-1)^j}}
\psi(N)(\psi(a)-\psi(b))\tau(\psi)\Lambda_g(s,\overline{\psi})\\
&\hspace*{8cm} \cdot\hyp{\frac{1-s+\nu+[j+\epsilon]}{2}}{\frac{1-s-\nu+[j+\epsilon]}{2}}{\frac12+[j+\epsilon]}{-w^2}y^{s-\frac12}ds
.
\end{align*}
The last line follows from the functional equation in Proposition \ref{prop: SinCosFE}. Applying the Euler identity \eqref{eqn:euler identity} we see that $f(z+\alpha)-f(z+\beta)$ equals
\begin{align*}
&(-1)^\epsilon\frac{\chi(q)}{q-1}\sum_{j\in\{0,1\}}\sum_{\substack{\psi~(\text{mod }q)\\\psi\neq\psi_0\\\psi(-1)=(-1)^j}}(2 w)^{[j+\epsilon]}
\psi(N)\tau(\psi)(\psi(a)-\psi(b))
\\
&\qquad\cdot\frac{1}{2\pi i}\int_{(c)}\Lambda_g(s,\overline{\psi})
\hyp{\frac{s+\nu+[j+\epsilon]}{2}}{\frac{s-\nu+[j+\epsilon]}{2}}{\frac12+[j+\epsilon]}{-w^2}\left(\frac{1}{(1+w^2)Nq^2y}\right)^{\frac12-s}ds\\
&=\frac{\chi(q)}{q-1}\sum_{\substack{\psi~(\modulo 	q)\\\psi\neq\psi_0}} \psi(-N)\left(\psi(a)-\psi(b)\right)\tau(\psi)g_{\overline{\psi}}\left(-\frac{1}{Nq^2 z}\right).
\end{align*}
In the last line we applied the inverse Mellin transform to the equality in Proposition \ref{prop:Mellin-transform} for $h=g_{\overline{\psi}}$.
\end{proof}
\begin{proposition}\label{prop:transformation of twists} 
If $a_n$ (resp. $b_n$) satisfy the assumptions of Theorem~\ref{thm:EntireConverse}, then,
\[
f_\psi(z)=\chi(q)\psi(-N)\frac{\tau(\psi)}{\tau(\overline{\psi})}g_{\overline{\psi}}\left(-\frac{1}{Nq^2 z}\right)
\]
for all non-principal characters $\psi$ modulo $q$.
\end{proposition}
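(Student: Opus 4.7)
The plan is to invert the character sum in Lemma~\ref{lem:difference} by orthogonality, isolating the single $g_{\overline{\psi}}$ term on the right. As a preliminary step, I would establish the Gauss-sum identity
\[
\sum_{a\bmod q}\overline{\psi}(a)f(z+a/q) = \tau(\overline{\psi})f_\psi(z).
\]
To prove this, I would insert the Fourier expansion $f(w) = f_0(\Im w) + \sum_{n\neq 0}\frac{a_n}{2\sqrt{|n|}}W_\nu(n\Im w)e(n\Re w)$, noting that the constant term $f_0(y)$ is killed by $\sum_a \overline{\psi}(a) = 0$ (since $\psi \neq \psi_0$) and that the remaining inner sum satisfies $\sum_{a\bmod q}\overline{\psi}(a)e(na/q) = \psi(n)\tau(\overline{\psi})$ for all $n$. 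This last identity follows from primitivity of $\psi$ modulo the prime $q$: for $(n,q)=1$ the change of variables $a\mapsto \overline{n}a$ suffices, while for $q\mid n$ both sides vanish.

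Next, using $\sum_{a\bmod q}\overline{\psi}(a) = 0$ once more, I would rewrite
\[
\tau(\overline{\psi})f_\psi(z) = \sum_{a\bmod q}\overline{\psi}(a)\bigl[f(z+a/q) - f(z)\bigr].
\]
Applying Lemma~\ref{lem:difference} to each summand with $\alpha = a/q$ and $\beta = 0$ (the $\beta = 0$ contributions on the right vanish because $\psi'(0)=0$ for every non-principal $\psi'$ mod $q$) gives
\[
\tau(\overline{\psi})f_\psi(z) = \frac{\chi(q)}{q-1}\sum_{\psi'\neq \psi_0}\psi'(-N)\tau(\psi')\,g_{\overline{\psi'}}\!\left(-\tfrac{1}{Nq^2z}\right)\sum_{a\bmod q}\overline{\psi}(a)\psi'(a),
\]
after interchanging the finite sums. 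By orthogonality, the inner sum over $a$ equals $q-1$ if $\psi' = \psi$ and $0$ otherwise, so only the $\psi' = \psi$ term survives. Dividing by $\tau(\overline{\psi})$ yields the stated formula.

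There is no real obstacle here: the entire argument is a two-step orthogonality manipulation, and all sums over $a\bmod q$ are finite so no convergence issue arises. The only subtlety worth double-checking is the Gauss-sum identity in the case $q\mid n$, which is why the statement requires $\psi$ non-principal modulo the prime $q$; primitivity is automatic in this setting and ensures the formula $\sum_a \overline{\psi}(a)e(na/q) = \psi(n)\tau(\overline{\psi})$ is uniformly valid, which is what makes the two applications of orthogonality cleanly decouple.
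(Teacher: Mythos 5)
Your opening Gauss--sum identity $\sum_{a\bmod q}\overline{\psi}(a)f(z+a/q)=\tau(\overline{\psi})f_\psi(z)$ is correct, and the orthogonality strategy is essentially the right one, but there is a genuine gap where you apply Lemma~\ref{lem:difference} with $\beta=0$. That lemma is only available for $\alpha=a/q$, $\beta=b/q$ with $a,b\not\equiv 0\bmod q$: its proof shifts a contour, which requires $\Lambda_f\left(s,\alpha,\cos^{(j)}\right)-\Lambda_f\left(s,\beta,\cos^{(j)}\right)$ to be entire and to satisfy the functional equations of Proposition~\ref{prop: SinCosFE}, and by Lemma~\ref{lem:Lcos and Lsin via twists} this holds precisely because the $a$-independent terms $\Lambda_f(s)-\frac{q}{q-1}\Lambda_f(s,\psi_0)$ cancel in the difference of two nonzero shifts. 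If instead $\beta=0$, then $\Lambda_f\left(s,0,\cos\right)=\Lambda_f(s)$, so the even-twist difference contains $-\frac{q}{q-1}\Lambda_f(s,\psi_0)$, the principal-character twist, whose continuation, boundedness on vertical strips and functional equation are not among the hypotheses of Theorem~\ref{thm:EntireConverse} (only primitive twists of conductor $q\in\mathcal{P}$ and the untwisted $\Lambda_f,\Lambda_g$ are controlled). Worse, the $\beta=0$ identity you invoke is not just unproved but false as an identity of functions: summing it over $a\in(\Z/q\Z)^\times$ kills its right-hand side by orthogonality, while the left-hand side becomes $-q\sum_{(n,q)=1}\frac{a_n}{2\sqrt{|n|}}W_\nu(ny)e(nx)$, which is nonzero in general; the true expansion of $f(z+a/q)-f(z)$ carries an extra principal-character term that your right-hand side omits. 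That this extra term is independent of $a$ and would be annihilated by your subsequent $\overline{\psi}(a)$-average does not license using the lemma at $\beta=0$ as an intermediate step.

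The repair is small and brings you back to the paper's argument: fix any $b_0$ coprime to $q$ and use $\sum_{a\bmod q}\overline{\psi}(a)=0$ to write $\tau(\overline{\psi})f_\psi(z)=\sum_{a\bmod q}\overline{\psi}(a)\bigl[f(z+a/q)-f(z+b_0/q)\bigr]$; Lemma~\ref{lem:difference} now applies legitimately to each summand, and orthogonality in the form $\sum_{a\bmod q}\overline{\psi}(a)\left(\psi'(a)-\psi'(b_0)\right)=(q-1)\delta_{\psi'=\psi}$ isolates the $\psi'=\psi$ term exactly as you intended. The paper performs the same cancellation in a slightly different dress: it rewrites the left side of Lemma~\ref{lem:difference} as $\frac{1}{q-1}\sum_{\psi\neq\psi_0}\tau(\overline{\psi})\left(\psi(a)-\psi(b)\right)f_\psi(z)$, observes that the resulting combination of non-principal character values is independent of $a$, and concludes by linear independence of Dirichlet characters rather than by an explicit Gauss-sum average; the two finishes are interchangeable, but both crucially keep two nonzero shifts so that the principal-character twist never enters.
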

\begin{proof}
By the proof of Lemma \ref{lem:Lcos and Lsin via twists},
\[
f(z+\alpha)-f(z+\beta) = \frac{1}{q-1}\sum_{\substack{\psi~(\modulo q)\\\psi\neq \psi_0}}\tau(\overline{\psi})\left(\psi(a)-\psi(b)\right)f_\psi(z).
\]
So equation \eqref{eqn:difference} can be rearranged to
\begin{multline*}
\sum_{\substack{\psi~(\modulo q)\\\psi\neq \psi_0}}\psi(a)\left(\tau(\overline{\psi})f_\psi(z)-\chi(q)\psi(-N)\tau(\psi)g_{\overline{\psi}}\left(-\frac{1}{Nq^2 z}\right)\right)
\\=
\sum_{\substack{\psi~(\modulo q)\\\psi\neq \psi_0}}\psi(b)\left(\tau(\overline{\psi})f_\psi(z)-\chi(q)\psi(-N)\tau(\psi)g_{\overline{\psi}}\left(-\frac{1}{Nq^2 z}\right)\right).
\end{multline*}
This implies that the expression on the left-hand side is independent of the choice of $a\not\equiv 0\mod q$. In other words, it is a linear combination of non-principal characters modulo $q$ that produces a multiple of the principal character. Since the set of all characters modulo $q$ is linearly independent, the coefficients of this linear combination must vanish. 
\end{proof}
We continue along the lines of \cite[Section 1.5]{AFAR}. For $r\in\mathbb{Q}$, let $T^r=\sabcd1{r}01$ so that $\left(T^r\right)^{-1}=T^{-r}$. For a primitive Dirichlet character modulo $q\in \mathcal{P}$ we have
\begin{equation}\label{eq.fpsisuma}
f_\psi = \tau(\overline{\psi})^{-1}\sum_{a\mod q}\overline{\psi}(a)f|T^{a/q}.
\end{equation}
In Section~\ref{subsection:transformation props from FE}, we showed that $f|\sabcd 01{-N}0 = g$.  Multiplying equation~\eqref{eq.fpsisuma} by $\sabcd01{-Nq^2}0$,  we arrive at:
\begin{align}\label{eq.taufpsi}
\tau(\overline{\psi})f_\psi\left(-\frac{1}{Nq^2z}\right) &= \sum_{a\mod q}\overline{\psi(a)}f|\begin{pmatrix}
-Naq & 1 \\ -Nq^2 & 0
\end{pmatrix}= \sum_{a\mod q}\overline{\psi(a)}g|\begin{pmatrix}
q^2 & 0 \\ -Naq & 1
\end{pmatrix}\\\label{line2}
&=\sum_{a\mod q}\overline{\psi(\tilde{a})}g|\begin{pmatrix}
q^2 & 0 \\ -N\tilde{a}q & 1
\end{pmatrix}=\psi(-N)\sum_{a\mod q}\psi(a)g|\begin{pmatrix}
q & -a \\ -N\tilde{a} & \frac{Na\tilde{a}+1}{q}
\end{pmatrix}T^{a/q}.
\end{align}
In~\eqref{line2}, we replaced $a$ by $\tilde{a}$, an integer that is inverse to $-Na\mod q$, and used that $\overline{\psi(a)}=\psi(-N)\psi(\tilde{a})$. Proposition \ref{prop:transformation of twists} states that the left-hand side of equation~\eqref{eq.taufpsi} is equal to $\chi(q)\psi(-N)\tau(\psi)g_{\overline{\psi}}$. So
\begin{align}\label{eq:average transformation equation}
\chi(q)
\sum_{a\mod q}\psi(a)g|T^{a/q}
=
\sum_{a\mod q}\psi(a)g|\begin{pmatrix}
q & -a \\ -N\tilde{a} & \frac{Na\tilde{a}+1}{q}
\end{pmatrix}T^{a/q}.
\end{align}
Equation~\eqref{eq:average transformation equation} is true for all primitive characters $\psi$ modulo $q$. Taking linear combinations we see that we can replace $\psi(a)$ above with any function $c$ on $(\Z/q\Z)^\times$ that satisfies $\sum_{a\mod q}c(a)=0$. Now we choose another prime $s\in\mathcal{P}$, not equal to $q$, with $qs = 1+r\tilde{r}N$ for integers $r,\tilde{r}$. Let $c$ be the function which is $1$ at $r\bmod q$, $-1$ at $-r\bmod q$, and $0$ elsewhere. Replacing $\psi$ in  \eqref{eq:average transformation equation} with $c$:
\begin{align}\label{eq:replace psi with c}
\chi(q)
\left(g|T^{r/q} - g|T^{-r/q}\right)
=
g|\begin{pmatrix}
q & -r \\ -N\tilde{r} & s
\end{pmatrix}T^{r/q} - g|\begin{pmatrix}
q & r \\ N\tilde{r} & s
\end{pmatrix}T^{-r/q}.
\end{align}
To ease the notation, we extend the action of $\GL_2^+(\R)$ on functions to the group algebra $\mathbb{C}[\GL_2^+(\R)]$ and introduce the right-ideal $\Omega=\{w\in \mathbb{C}[\GL_2^+(\R)]:~g|w = 0\}$. Let $A_{\pm}=\sabcd{q}{\pm r}{\pm N\tilde{r}}{s}$. Equation \eqref{eq:replace psi with c} translates to
\begin{equation}\label{eq.qs}
\left(A_+
 - \chi(q)\right)T^{-r/q} \equiv \left(A_- - \chi(q)\right)T^{r/q}\mod \Omega.
\end{equation}
Note that $A_{\pm}^{-1}=\sabcd{s}{\mp r}{\mp N\tilde{r}}{q}$, so reversing the roles of $q$ and $s$ in the above analysis gives:
\begin{equation}\label{eq.sq}
\left(A_-^{-1}
 - \chi(s)\right)T^{-r/s}\equiv\left(A_+^{-1} - \chi(s)\right)T^{r/s}\mod\Omega.
\end{equation}
Multiplying equation~\eqref{eq.qs} by $T^{r/q}$ we obtain
\begin{equation}\label{eq.qs multiplied}
A_+ - \chi(q) \equiv  \left(A_- - \chi(q)\right)T^{2r/q}\mod\Omega.
\end{equation}
Multiplying equation~\eqref{eq.sq} by the matrix $-\chi(q)T^{r/s}A_-T^{2r/q}$, we see
\begin{align}\label{eq.sq multiplied}\begin{split}
\left(A_- - \chi(q)\right) T^{2r/q}
&\equiv  
-\chi(q)\left(A_+^{-1} - \chi(s)\right)T^{2r/s}A_-T^{2r/q}\mod\Omega\\
&\equiv\left(A_+-\chi(q)\right)A_+^{-1}T^{2r/s}A_-T^{2r/q}\mod\Omega.
\end{split}\end{align}
Comparing equations \eqref{eq.qs multiplied} and \eqref{eq.sq multiplied} we obtain
\begin{equation}\label{eq.1-M}
\left(A_+ - \chi(q)\right)(1-M(q,s,r))\in\Omega,
\end{equation}
where $M(q,s,r)=A_+^{-1}T^{2r/s}A_-T^{2r/q}$ or, more explicitly,
\begin{equation}\label{eq.Melliptic}
M(q,s,r)= \begin{pmatrix}
1 & \frac{2r}{q}\\\frac{-2N\tilde{r}}{s} & -3+\frac{4}{qs}
\end{pmatrix}.
\end{equation}
Recall that a matrix $M\in\SL_2(\mathbb{R})$ is elliptic if $|\text{tr}(M)|<2$. The matrix $M(q,s,r)$ in equation~\eqref{eq.Melliptic} is elliptic of infinite order, since its eigenvalues are not roots of unity. We sum up our construction in the following proposition.
\begin{proposition}\label{prop:constr of ell mat}
Let $q$ and $s$ be distinct elements of $\mathcal{P}$ with $qs = 1+r\tilde{r}N$, for $r,\tilde{r}\in\mathbb{Z}$. Then $M(q,s,r)$, defined as in \eqref{eq.Melliptic}, is an elliptic matrix of infinite order, with
\begin{equation}\label{eq:crucial M equation}
\left(g|\abcd qr{N\tilde{r}}s - \chi(q)g\right)\big|(1-M(q,s,r))=0.
\end{equation}
\end{proposition}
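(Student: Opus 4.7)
The plan is to extract the proposition from the machinery built up earlier in Section~\ref{Twists.section}. First, I would combine Proposition~\ref{prop:transformation of twists} with the expansion $f_\psi=\tau(\overline{\psi})^{-1}\sum_a\overline{\psi}(a)\,f|T^{a/q}$ and the relation $f|\sabcd{0}{1}{-N}{0}=g$ established in Section~\ref{subsection:transformation props from FE}. Expanding both sides of Proposition~\ref{prop:transformation of twists} and multiplying on the right by $\sabcd{0}{1}{-Nq^2}{0}$ yields, after substituting $a\mapsto\tilde{a}$ where $\tilde{a}$ is the inverse of $-Na$ modulo $q$, the averaged transformation identity~\eqref{eq:average transformation equation}. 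Since this identity holds for every primitive character $\psi$ modulo $q$, by linearity it persists when $\psi(a)$ is replaced by any $c:(\Z/q\Z)^\times\to\mathbb{C}$ with $\sum_a c(a)=0$.

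Next I would specialize to distinct primes $q,s\in\mathcal{P}$ satisfying $qs\equiv 1\pmod N$, which exist in abundance by the equidistribution hypothesis on $\mathcal{P}$, and write $qs=1+rN\tilde{r}$. Taking $c$ to be the odd delta function supported at $\pm r$ collapses~\eqref{eq:average transformation equation} to~\eqref{eq:replace psi with c}. Recasting this inside the group algebra $\mathbb{C}[\GL_2^+(\R)]$ modulo the right ideal $\Omega=\{w:g|w=0\}$ produces $(A_+-\chi(q))T^{-r/q}\equiv(A_--\chi(q))T^{r/q}\pmod{\Omega}$ with $A_\pm=\sabcd{q}{\pm r}{\pm N\tilde{r}}{s}$. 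Repeating the construction with $q$ and $s$ swapped, using $A_\pm^{-1}=\sabcd{s}{\mp r}{\mp N\tilde{r}}{q}$, gives the dual congruence~\eqref{eq.sq}.

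The third step is to splice the two congruences. Multiplying the first on the right by $T^{r/q}$ and the second on the right by $-\chi(q)T^{r/s}A_-T^{2r/q}$ produces two expressions for $(A_--\chi(q))T^{2r/q}$ modulo $\Omega$; equating them factors $A_+-\chi(q)$ out on the left, yielding $(A_+-\chi(q))(1-M(q,s,r))\in\Omega$ with $M(q,s,r)=A_+^{-1}T^{2r/s}A_-T^{2r/q}$. This is exactly~\eqref{eq:crucial M equation}.

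Finally, I would verify the assertions about $M(q,s,r)$. Using $rN\tilde{r}=qs-1$, a direct multiplication collapses $A_+^{-1}T^{2r/s}A_-$ to $\sabcd{1}{0}{-2N\tilde{r}/s}{1}$, and appending $T^{2r/q}$ gives the explicit form~\eqref{eq.Melliptic}. Each factor has determinant one so $\det M=1$, while the trace equals $-2+4/(qs)$, which lies in $(-2,2)$ since distinct odd primes satisfy $qs\geq 15$, establishing ellipticity. For infinite order I would argue that any finite-order element of $\SL_2(\R)$ has eigenvalues which are roots of unity, hence algebraic integers whose sum (the trace) is a rational algebraic integer, i.e.\ an ordinary integer; since $-2+4/(qs)$ is rational but not an integer when $qs\nmid 4$, the matrix $M(q,s,r)$ must have infinite order. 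The main obstacle is the bookkeeping in the third step — keeping straight which translates multiply from which side, and confirming that the two paths to $(A_--\chi(q))T^{2r/q}$ genuinely coincide in the group algebra — while the ellipticity and infinite-order claims reduce to the elementary calculation just sketched.
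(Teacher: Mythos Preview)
Your proposal is correct and follows the paper's argument essentially step for step: you recover the averaged identity~\eqref{eq:average transformation equation} from Proposition~\ref{prop:transformation of twists}, specialize via the odd delta function to obtain~\eqref{eq:replace psi with c}, pass to the group-algebra congruences~\eqref{eq.qs} and~\eqref{eq.sq}, and splice them exactly as in~\eqref{eq.qs multiplied}--\eqref{eq.sq multiplied} to reach~\eqref{eq.1-M}. Your verification of the explicit form~\eqref{eq.Melliptic}, the ellipticity via the trace $-2+4/(qs)\in(-2,2)$, and the infinite-order argument through algebraic-integer traces simply flesh out what the paper asserts in a single line.
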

\subsection{Two Circles}\label{TwoCircles.subsection}
A matrix $M$ is elliptic if and only if $M$ has a unique fixed point in $\mathcal{H}$.  Weil's lemma states that a holomorphic function on the upper half-plane invariant under an infinite order elliptic matrix is constant \cite[Lemma~1.5.1]{AFAR}. We begin by proving a geometric interpretation of this statement.
\begin{lemma}
Let $h$ be a continuous function on $\HH$ that is invariant under an infinite order elliptic matrices $M$ with fixed point $z_0\in\HH$. Then $h$ is constant on the hyperbolic circles around $z_0$.
\end{lemma}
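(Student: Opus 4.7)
The plan is to identify the hyperbolic circles around $z_0$ with the orbits of the stabilizer subgroup of $z_0$ in $\mathrm{PSL}_2(\R)$ and then exploit density of an infinite cyclic subgroup of the circle group.

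First I would reduce to a normal form. Pick any $\sigma\in\mathrm{SL}_2(\R)$ with $\sigma(i)=z_0$; the map $z\mapsto\sigma^{-1}z$ is a hyperbolic isometry sending $z_0$ to $i$, sending hyperbolic circles around $z_0$ to hyperbolic circles around $i$, and conjugating $M$ into an element $M':=\sigma^{-1}M\sigma$ of the stabilizer of $i$. Replacing $h$ by $h\circ\sigma$, I may therefore assume $z_0=i$ and $M$ fixes $i$. The stabilizer of $i$ in $\mathrm{SL}_2(\R)$ is the compact group $\mathrm{SO}(2)$, parametrised by $\theta\mapsto R_\theta=\sabcd{\cos\theta}{\sin\theta}{-\sin\theta}{\cos\theta}$, so $M=R_{\theta_0}$ for some $\theta_0\in\R$. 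The hyperbolic circles around $i$ are precisely the $\mathrm{SO}(2)$-orbits.

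Next I would use that $M$ has infinite order. If $\theta_0/\pi$ were rational then $M=R_{\theta_0}$ would have finite order in $\mathrm{PSL}_2(\R)$, hence $M^2$ would be trivial in $\mathrm{SL}_2(\R)$ up to sign, and in particular $M$ would have finite order in $\mathrm{SL}_2(\R)$, contradicting the hypothesis. Thus $\theta_0/\pi\notin\Q$, and the cyclic subgroup $\{M^n:n\in\Z\}=\{R_{n\theta_0}:n\in\Z\}$ is dense in $\mathrm{SO}(2)$ by the classical equidistribution of irrational rotations on the circle.

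Finally, let $z\in\HH$ be arbitrary and let $C$ be the hyperbolic circle around $i$ through $z$. Every point of $C$ has the form $R_\theta z$ for some $\theta$, and by the density just established the orbit $\{M^n z:n\in\Z\}=\{R_{n\theta_0}z:n\in\Z\}$ is dense in $C$. By hypothesis $h(M^n z)=h(z)$ for every $n$, so $h$ takes the constant value $h(z)$ on a dense subset of $C$. Continuity of $h$ extends this equality to all of $C$, proving that $h$ is constant on $C$. The only step requiring any care is the density assertion; the main conceptual point is simply that infinite order elliptic elements correspond to irrational rotations, and irrational rotations have dense orbits on the circle.
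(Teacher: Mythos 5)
Your proof is correct and takes essentially the same route as the paper: both conjugate the infinite-order elliptic element to a rotation about a standard centre (the paper via the Cayley transform to the disc model, you via an $\mathrm{SL}_2(\R)$ element sending $i$ to $z_0$, so that $M$ becomes an element of $\mathrm{SO}(2)$), and then combine density of an irrational rotation orbit on each circle with continuity of $h$. One minor slip: finite order in $\mathrm{PSL}_2(\R)$ does not force ``$M^2$ trivial up to sign'' but only $M^n=\pm I$ for some $n$, which still gives finite order in $\mathrm{SL}_2(\R)$, so the argument is unaffected.
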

\begin{proof}
Let $K=\frac{1}{\sqrt{z_0-\overline{z_0}}}\sabcd{1}{-z_0}{1}{-\overline{z_0}}$ be the Cayley transform that maps the upper half plane $\mathcal{H}$ to the open unit disk $\mathcal{D}$ and takes $z_0\in\mathcal{H}$ to $0\in\mathcal{D}$. The transformation $L = KMK^{-1}$ on $\mathbb{P}^1(\mathbb{C})$ fixes $0$ and $\infty$ and hence has the form $\sabcd{e^{i\pi \theta}}{0}{0}{e^{-i\pi\theta}}$ with irrational $\theta$, because $M$ has infinite order. Consider the function $\widetilde{h}(z) = h(K^{-1}z)$. Since
\[
\widetilde{h}(Lz) = h(K^{-1}Lz) = h(K^{-1}LKK^{-1}z) = h(K^{-1}z) = \widetilde{h}(z),
\]
we get $\widetilde{h}(e^{2\pi i m \theta}z)=\widetilde{h}(z)$ for all $m\in\Z$. Since the set $\{e^{2\pi i m \theta}|m\in\Z\}$ is dense on the unit circle we get that $\widetilde{h}(z)=\widetilde{h}(|z|)$ and hence $\widetilde{h}$ is constant on all circles $C_r=\{re^{it}|t\in[0,2\pi)\}$. This implies that $h$ is constant on the preimages under the Cayley transform of the circles $C_r$. These are exactly the hyperbolic circles around $z_0$.
\end{proof}
A corollary of this is that if $h$ is holomorphic, then it is constant on all of $\mathcal{H}$. There are counter-examples to this in the real-analytic setting of Maass forms (cf. \cite[Section~3.4]{GM}). Instead, we resort to the following theorem.

\begin{theorem}\label{TwoCircles.theorem}
If $h$ is a continuous function on $\HH$ that is invariant under two infinite order elliptic matrices with distinct fixed points in $\HH$ then it is constant.
\end{theorem}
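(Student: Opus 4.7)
The plan is to leverage the preceding lemma, which asserts that a continuous function on $\HH$ invariant under an infinite order elliptic matrix is constant on every hyperbolic circle around its fixed point. Let $z_0,z_1\in\HH$ be the two distinct fixed points and set $a=d(z_0,z_1)>0$, where $d$ denotes the hyperbolic distance. Applying the lemma to each matrix separately produces continuous ``radial profiles'' $H_0,H_1\colon[0,\infty)\to\mathbb{C}$ satisfying
\[
h(p)=H_0(d(p,z_0))=H_1(d(p,z_1))\quad\text{for every }p\in\HH.
\]

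The next step is to record the geometric compatibility that drives everything: for any $r,s\geq 0$ obeying the triangle inequality $|r-s|\leq a\leq r+s$, the hyperbolic circle of radius $r$ around $z_0$ meets the hyperbolic circle of radius $s$ around $z_1$, so there exists $p\in\HH$ realising both distances. This forces the identity $H_0(r)=H_1(s)$ for every such admissible pair.

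The third step is to propagate constancy across $[0,\infty)$. For any fixed $r\geq a$, the admissible values of $s$ fill the entire interval $[r-a,r+a]$, so $H_1$ is constant on that interval with value $H_0(r)$. Specialising $r=a,2a,3a,\ldots$ gives the family of intervals $[(n-1)a,(n+1)a]$ for $n\geq 1$, whose union is $[0,\infty)$ and whose consecutive overlaps force a single common value. Hence $H_1$ is constant on $[0,\infty)$, and so $h=H_1\circ d(\cdot,z_1)$ is constant on $\HH$.

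I do not expect a serious obstacle: the only care needed is the bookkeeping check that the admissible $(r,s)$-region propagates constancy without leaving a gap, and that the hyperbolic triangle inequality is genuinely realisable (standard in any geodesic metric space, hence in $\HH$). Conceptually the argument is clean: each elliptic symmetry reduces $h$ to a function of a single real variable (the distance to its fixed point), and having two such symmetries with distinct centres is rigid enough to collapse the remaining one-parameter freedom.
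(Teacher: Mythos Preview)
Your proof is correct and follows essentially the same approach as the paper: both arguments use the preceding lemma to reduce $h$ to a function of hyperbolic distance from each fixed point, then exploit intersections of hyperbolic circles (governed by the triangle inequality) to propagate constancy across all radii. Your formulation via the radial profiles $H_0,H_1$ is a slightly cleaner packaging of the same iterative disk-extension argument the paper carries out after a Cayley transform.
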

\begin{proof}
Let $M_1,M_2$ be the two elliptic matrices and $z_1,z_2$ be their fixed points in $\HH$. Let $K$ be the Cayley transform that maps $z_1$ to $0$. By the above proof $\widetilde{h}(z)=h(K^{-1}z)$ is constant on all circles around $0$.

Let $d= d_{\text{hyp}}$ be the hyperbolic distance on $\mathcal{D}$ and $y\in\mathcal{D}$. If $d(0,y) \leq d(0,Kz_2)$, then the circle of radius $d(0,y)$ around $0$ has two intersection points with the full hyperbolic line connecting $0$ and $Kz_2$. Since this line is a geodesic we can deduce the following. One of the intersection points, $y_1$, is between $0$ and $Kz_2$, satisfying $d(y_1, Kz_2)\leq d(0,y_1)+ d(y_1, Kz_2)=d(0,Kz_2)$. The other one, $y_2$, satisfies $d(y_2,Kz_2)
= d(y_2, 0)+d(0,Kz_2)\geq d(0,Kz_2)$. By the intermediate value theorem the circle also contains an element $y_3$ with $d(y_3, Kz_2) = d(0,Kz_2)$. Since $\widetilde{h}$ is constant on circles around the origin we have $\widetilde{h}(y) = \widetilde{h}(y_3)$. Now $\widetilde{h}$ is also constant around hyperbolic circles with centre $Kz_2$. Since $d(y_3, Kz_2) = d(0,Kz_2)$, we obtain $\widetilde{h}(y_3) = \widetilde{h}(0)$. This implies that $\widetilde{h}$ is constant on the closed disc with centre $0$ of radius $d(0,Kz_2)$.

Now suppose $\widetilde{h}$ is constant on the closed disc of radius $r\geq d(0,Kz_2)$ around the origin. The disc contains a point $y$ with $d(y,Kz_2) = r+d(0,Kz_2)$ and the point $Kz_2$ with $d(Kz_2,Kz_2)=0$. Hence $\widetilde{h}$ is also constant on the closed disc of radius $r+d(0,Kz_2)$ around $Kz_2$. Using that $\widetilde{h}$ is constant on circles around $0$ we get that $\widetilde{h}$ is constant on the disk arount the origin with radius $r+2d(0,Kz_2)$. Repeating this process we see that $\widetilde{h}$ is constant.
\end{proof}

\subsection{Proof of Theorem~\ref{thm:EntireConverse}}\label{section:entire}
We now combine the results of the previous sections to deduce Theorem \ref{thm:EntireConverse}. The essential point is the construction of two infinite order elliptic matrices.
\begin{proof}
Let $\gamma = \sabcd ab{Nc}d \in \Gamma_0(N)$. The proof will follow if we can find two distinct infinite order elliptic matrices under which $g|\sabcd{a}{b}{Nc}{d} - \chi(a) g$ is invariant. Let $q,s$ be two odd primes in $\mathcal{P}$ such that $q\equiv a$ and $s\equiv d$ modulo $Nc$, i.e., $q= a- uNc$ and $s= d-vNc$ for $u,v\in\Z$. By the assumptions on $\mathcal{P}$ there are infinitely many such $q$ and $s$. Let $r=b-av+uvNc-ud$.
Then
\[
g|\abcd{a}{b}{Nc}{d} = g|\abcd 1u01 \abcd qr{Nc}s \abcd 1v01 = g|\abcd qr{Nc}s \abcd 1v01.
\]
By Proposition \ref{prop:constr of ell mat} the function $g_1 = g|\sabcd qr{Nc}s - \chi(a)g$ is invariant under the elliptic matrix
\[
M(q,s,r) = \left(\begin{array}{cc}
1 & \frac{2 \, r}{q} \\
-\frac{2 \, N c}{s} & \frac{4}{q s} - 3
\end{array}\right).
\]
The fixed point of $M(q,s,r)$ in $\HH$ is given by
\[
z_1 = i \, \sqrt{-\frac{s^{2} {\left(\frac{1}{q s} - 1\right)}^{2}}{N^{2}
c^{2}} + \frac{4 \, r s}{q N c}} - \frac{s {\left(\frac{1}{q s} -
1\right)}}{N c}.
\]
Let $q'=a-u'Nc\equiv a\mod N$ be another prime in  $\mathcal{P}$, different to $q$ and $s$ and $r'=-av+u'vNc+b-u'd$. We have
\[
g|\abcd {q'}{r'}{Nc}s =  g|\abcd{a}{b}{Nc}{d}\abcd 1{-v}01 = 
g|\abcd {q}{r}{Nc}s.
\]
As above we can show that $g_1' = g|\sabcd {q'}{r'}{Nc}s-\chi(a)g = g_1$ is invariant under $M(q',s,r')$. 
The fixed point of $M_{q',s}$ is
\[
z_2 = i \, \sqrt{-\frac{s^{2} {\left(\frac{1}{q' s} - 1\right)}^{2}}{N^{2}
c^{2}} + \frac{4 \, r' s}{q' N c}} - \frac{s {\left(\frac{1}{q' s} -
1\right)}}{N c}.
\]
Comparing real parts we find $z_1\neq z_2$ if $q\neq q'$. Hence, by Theorem \ref{TwoCircles.theorem}, $g_1$ is constant. Since both $g|\sabcd qr{Nc}s$ and $g$ are eigenfunctions of the Laplace Beltrami operator of the same positive eigenvalue and constant functions have eigenvalue $0$, $g_1$ can only be the zero function.

Since $g_1=0,$ we have
\[
g|\abcd{a}{b}{Nc}{d}= g|\abcd qr{Nc}s \abcd 1v01 = \chi(a)g|\abcd 1v01 = \chi(a)g.
\]
\end{proof}

\section{Proof of Theorem~\ref{thm:PolarConverse}}\label{PolarConverse.subsection}

In this section we will prove Theorem~\ref{thm:PolarConverse} by showing that its assumptions imply those of Theorem~\ref{thm:EntireConverse}. To that end, let $f$ and $g$ be as in Theorem~\ref{thm:PolarConverse}. From now on we assume that $\nu\neq0$, though this can be relaxed with technical modifications.
\begin{lemma} 
For $w\in\mathbb{R}_{\geq0}$, the function $\Lambda_f(s)\hyp{\frac{s+\epsilon+\nu}{2}}{\frac{s+\epsilon-\nu}{2}}{\frac{1}{2}+\epsilon}{-w^2}$ is $O\left((\Im s)^{-M}\right)$ uniformly in $\Re s$ for every $M>0$ as $\Im{s}\rightarrow\infty$. Its poles lie in the strip $-\sigma\leq \Re s\leq \sigma+1$. It is uniformly bounded on every vertical strip outside of a small neighbourhood around every pole.
\end{lemma}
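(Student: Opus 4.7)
The plan is to bound $\Lambda_f(s)F(s)$, where $F(s)$ denotes the hypergeometric factor, on two vertical lines far to the right and far to the left of the critical strip by Mellin inversion, and then interpolate across the middle via Phragmén--Lindelöf. The pole location will follow easily from absolute convergence and the functional equation.

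\emph{Rapid decay on the line $\Re s = c_0 > \sigma+1$.}  Termwise Mellin inversion on the Fourier--Whittaker series (Fubini is justified by absolute convergence of $\sum |a_n| n^{-\Re s}$) identifies
\[
(2w)^{\epsilon}\,\Lambda_f(s)F(s) \;=\; \int_0^\infty \tilde f(iy+wy)\, y^{s-3/2}\, dy,
\]
which is Proposition~\ref{prop:Mellin-transform} applied with $h=f$ and $\alpha=0$ (the $j=1$ term vanishes since $\cos^{(1)}(0)=0$). Writing $\phi(y):=\tilde f(iy+wy)$: $\phi$ is smooth on $(0,\infty)$, decays exponentially as $y\to\infty$ by the Bessel decay of $W_\nu$, and is $O(y^{-1-\kappa})$ as $y\to 0$ (split the Fourier sum at $|n|\sim 1/y$, using $W_\nu(u)\sim C|u|^{1/2-|\Re\nu|}$ on the head and exponential decay on the tail). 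The same splitting bounds each $(y\partial_y)^N\phi$ polynomially near $0$, and the identity $\widetilde{\mathcal M}\bigl((y\partial_y)^N\phi\bigr)(s)=(-1)^N(s-\tfrac12)^N\,\widetilde{\mathcal M}(\phi)(s)$ then yields $\Lambda_f(s)F(s)=O_M(|\Im s|^{-M})$ on $\Re s=c_0$ for every $M>0$.

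\emph{Reflection and interpolation.}  The trivial-twist functional equation~\eqref{eq:FE} combined with the Euler identity~\eqref{eqn:euler identity} (which specialises to $F(s)=(1+w^2)^{1/2-s}F(1-s)$) gives
\[
\Lambda_f(s)F(s) \;=\; (-1)^\epsilon\bigl(N(1+w^2)\bigr)^{1/2-s}\,\Lambda_g(1-s)\,F(1-s),
\]
so applying the previous step to $g$ (whose coefficients satisfy the same hypotheses) yields rapid decay on $\Re s=1-c_0$ as well. Since $F(s)$ is entire in $s$ ($\tfrac12+\epsilon\notin\Z_{\le 0}$ and $-w^2\notin[1,\infty)$) and of finite order (bound the series $\sum_n \frac{(a)_n(b)_n}{(c)_n n!}(-w^2)^n$ termwise inside the disk of convergence and extend by the Euler identity for $w\ge 1$), and since $P(s)\Lambda_f(s)$ is by hypothesis entire of finite order, the product $P(s)\Lambda_f(s)F(s)$ is entire of finite order. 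Phragmén--Lindelöf applied in the vertical strip $1-c_0\le\Re s\le c_0$ propagates the rapid decay from the two boundary lines to the interior; dividing by $P(s)$ introduces only polynomial losses away from its finitely many zeros, so $\Lambda_f(s)F(s)=O_M(|\Im s|^{-M})$ uniformly on this strip outside small neighbourhoods of the zeros of $P$.

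\emph{Poles, uniform boundedness, and main obstacle.}  Since $F$ is entire, the poles of $\Lambda_f F$ coincide with those of $\Lambda_f$; the Dirichlet series $L_f$ converges absolutely and the Gamma factors are holomorphic for $\Re s>\sigma+1$, while the functional equation gives analyticity for $\Re s<-\sigma$, so every pole lies in $-\sigma\le\Re s\le\sigma+1$. The claimed uniform boundedness on vertical strips away from these poles is then immediate from the rapid decay. The principal technical difficulty is controlling the derivatives $(y\partial_y)^N\phi$ near $y=0$: each differentiation of $W_\nu(ny)$ and of the exponential brings down an extra factor of $n$ and roughens the small-argument singularity by one power of $y^{-1}$, so the $|n|\sim 1/y$ splitting must be re-executed carefully for every $N$; everything else, including the finite-order bound on $F$, reduces to a standard Stirling or series estimate.
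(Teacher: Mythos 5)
Your proposal is correct and reaches all three assertions, but it replaces the paper's key ingredient for the boundary estimate by a different one. Writing $F(s)$ for the hypergeometric factor, the paper obtains decay on a line $\Re s=c_0>\sigma+1$ purely from special-function asymptotics: by Stirling and the Luke asymptotics of Appendix~\ref{Hypergeometric.subsection}, the product $\Gamma_\R(s+\epsilon+\nu)\Gamma_\R(s+\epsilon-\nu)F(s)$ decays like $e^{-(\pi-v)|t|/2}$ with $v<\pi$ while $L_f(s)$ is bounded, so the decay there is in fact exponential; the left line then comes from \eqref{eq:FE} together with the Euler identity \eqref{eqn:euler identity} exactly as in your second step, and the interior strip is handled by Phragm\'en--Lindel\"of using $P(s)$, as you do. You instead get the right-line estimate by recognising $(2w)^{\epsilon}\Lambda_f(s)F(s)$ as the Mellin transform of $\tilde f(iy+wy)$ (Proposition~\ref{prop:Mellin-transform} with $\alpha=0$, the $j=1$ term vanishing) and integrating by parts with $y\partial_y$. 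This yields only rapid polynomial decay rather than exponential decay, which is all the lemma claims, and it trades the uniform hypergeometric asymptotics for elementary estimates on the Whittaker series; your worry about the derivatives is in fact harmless, since $y\partial_y$ does not worsen the $y^{-1-\kappa}$ behaviour at $0$ (the factor $2\pi inwy$ from the exponential is $O(w)$ on the range $|n|\leq 1/y$, and $u\,\partial_u$ preserves the small-argument size of $K_\nu$), and even if it did one could let $c_0$ grow with $M$, since the target strip is fixed. Your version of the Phragm\'en--Lindel\"of step, applied directly to $P(s)P(1-s)\Lambda_f(s)F(s)$ with decay on both boundary lines, is if anything a cleaner formulation of what the paper's last sentence intends.

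Two small repairs. First, that step needs $F(s)$ to be of finite exponential order on the strip, and your sketched justification is off: the Euler identity \eqref{eqn:euler identity} does not change the argument $-w^2$, so it cannot move $w\geq1$ into the disc of convergence, and a naive termwise bound of the series is delicate even for $w<1$. Use instead the Pfaff transformation \eqref{eqn:hypergeo transform}--\eqref{eqn:hypergeo transform 2}, or simply quote the growth bound $e^{v|t|/2}$, $v<\pi$, from Appendix~\ref{Hypergeometric.subsection}, which the paper needs anyway. Second, when $\epsilon=1$ and $w=0$ your Mellin identity degenerates to $0=0$; but in that case $F\equiv1$ and the line estimate is just Stirling applied to the $\Gamma_\R$-factors, so nothing is lost.
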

\begin{proof}
From Appendix~\ref{Hypergeometric.subsection}, it follows that  $\Lambda(f,s)\hyp{\frac{s+\epsilon+\nu}{2}}{\frac{s+\epsilon-\nu}{2}}{\frac{1}{2}+\epsilon}{-w^2}$ decays exponentially for fixed real part $\Re s>\sigma+1$ and $\Im s \to\infty$. The functional equation \eqref{eq:FE} and the Euler identity \eqref{eqn:euler identity} imply
\[
\Lambda_f(s)\hyp{\frac{s+\epsilon+\nu}{2}}{\frac{s+\epsilon-\nu}{2}}{\frac{1}{2}+\epsilon}{-w^2}=(-1)^\epsilon (N(1+w^2))^{\frac12-s}\Lambda_g(1-s)\hyp{\frac{1-s+\epsilon+\nu}{2}}{\frac{1-s+\epsilon-\nu}{2}}{\frac{1}{2}+\epsilon}{-w^2}.
\]
Therefore, we have exponential decay for $\Re s<-\sigma$. The occuring hypergeometric functions are entire in $s$. For $P(s)\in\mathbb{C}[s]$ as in Theorem~\ref{thm:PolarConverse}, the function $P(s)\Lambda(f,s)$ is entire, of finite order and bounded polynomially on vertical lines that lie outside of the critical strip. By the Phragm\'{e}n-Lindel\"of principle it is also uniformly bounded polynomially inside the critical strip, and hence $P(s)P(1-s)\Lambda(f,s)\hyp{\frac{s+\epsilon+\nu}{2}}{\frac{s+\epsilon-\nu}{2}}{\frac{1}{2}+\epsilon}{-w^2}$ decays exponentially on these strips.
\end{proof}
\begin{lemma}\label{CircleIntegral.lemma}
Integrating over any circle enclosing all poles of $\Lambda_f(s)$ we have, for any $z\in\mathcal{H}$,
\[
 \tilde{f}(z) - \tilde{g}\left(-\frac{1}{Nz}\right) 
 =\frac{(2w)^{\epsilon}}{2\pi i}\oint\Lambda_f(s)\hyp{\frac{s+\epsilon+\nu}{2}}{\frac{s+\epsilon-\nu}{2}}{\frac12+\epsilon}{-w^2}y^{\frac{1}{2}-s}ds,
\]
where $w=\Re z/\Im z$.
\end{lemma}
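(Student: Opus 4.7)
The plan is to mirror the contour-shift computation of Section~\ref{subsection:transformation props from FE}, but instead of evaluating each residue of $\Lambda_f(s)$ explicitly at $\{\pm\nu,1\pm\nu\}$, to package all residues into a single contour integral around a circle. The preceding lemma furnishes exactly the vertical-strip decay needed to justify the contour manipulations.

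First I would apply Proposition~\ref{prop:Mellin-transform} with $h=\tilde{f}$, $\alpha=0$, and $z=wy+iy$ where $w=\Re z/\Im z$. Because $\cos^{(1)}(0)=0$, only the $j=0$ term survives, and Mellin inversion yields
\[
\tilde{f}(z)=\frac{(2w)^{\epsilon}}{2\pi i}\int_{(c)}\Lambda_f(s)\,F(s)\,y^{\frac12-s}\,ds,\qquad F(s):=\hyp{\frac{s+\epsilon+\nu}{2}}{\frac{s+\epsilon-\nu}{2}}{\tfrac12+\epsilon}{-w^2},
\]
for $c$ to the right of all poles of $\Lambda_f$. Applying the same proposition to $\tilde{g}$ at the point $-1/(Nz)$, and noting that $\Im(-1/Nz)=\frac{1}{N(1+w^2)y}$ while $\Re(-1/Nz)/\Im(-1/Nz)=-w$, produces the analogous representation with $(-2w)^\epsilon$ in place of $(2w)^\epsilon$ and $\Lambda_g(s)\,(N(1+w^2)y)^{s-\frac12}$ in place of $\Lambda_f(s)\,y^{\frac12-s}$.

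Substituting $s\mapsto 1-s$, the functional equation~\eqref{eq:FE} with trivial character (which reads $\Lambda_g(1-s)=(-1)^{\epsilon}N^{s-\frac12}\Lambda_f(s)$) and Euler's identity for ${}_2F_1$ (which gives $F(1-s)=(1+w^2)^{s-\frac12}F(s)$) make all the $N$-powers and $(1+w^2)$-powers collapse, while the two signs $(-1)^{\epsilon}$ coming from $(-2w)^{\epsilon}=(-1)^{\epsilon}(2w)^{\epsilon}$ and from the functional equation cancel. The upshot is
\[
\tilde{g}(-1/Nz)=\frac{(2w)^{\epsilon}}{2\pi i}\int_{(1-c)}\Lambda_f(s)\,F(s)\,y^{\frac12-s}\,ds.
\]

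Subtracting the two representations, $\tilde{f}(z)-\tilde{g}(-1/Nz)$ becomes $\tfrac{(2w)^\epsilon}{2\pi i}$ times the difference of the two vertical integrals. Closing the rectangle between the lines $\Re s=1-c$ and $\Re s=c$ is the one genuinely technical step: by the assumption that $P(s)\Lambda_f(s)$ is entire for some polynomial $P$, $\Lambda_f(s)$ has only finitely many poles, all confined to the strip $-\sigma\le\Re s\le\sigma+1$ established in the preceding lemma, while $F(s)$ and $y^{\frac12-s}$ are entire in $s$. The horizontal pieces at heights $\pm T$ vanish as $T\to\infty$ by the decay estimate of that same lemma. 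The residue theorem then identifies $\int_{(c)}-\int_{(1-c)}$ with $\oint$ around any Jordan curve — in particular any circle — enclosing all the poles of $\Lambda_f$, which is the claim. The main obstacle is purely the vertical growth control required to pass horizontal sides to infinity, and this is supplied by the Phragm\'en--Lindel\"of bound just established.
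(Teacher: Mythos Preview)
Your proposal is correct and follows essentially the same route as the paper's proof: apply Proposition~\ref{prop:Mellin-transform} at $\alpha=0$ to both $\tilde{f}(z)$ and $\tilde{g}(-1/Nz)$, use the functional equation together with Euler's identity to rewrite the latter as an integral of $\Lambda_f(s)F(s)y^{\frac12-s}$ along $\Re s=1-c$, and then close the contour using the decay estimate of the preceding lemma. Your tracking of the sign $(-1)^{\epsilon}$ from $(-2w)^{\epsilon}$ against the $(-1)^{\epsilon}$ in the functional equation is slightly more explicit than the paper's, but the argument is the same.
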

\begin{proof}
Let $z = wy+iy$ and $c>\sigma$. Applying the inverse Mellin transform to Proposition~\ref{prop:Mellin-transform} with $h=\tilde{f}$ and $\alpha=0$ implies
\[
\tilde{f}(z)=\frac{(2w)^{\epsilon}}{2\pi i}\int_{(c)}\Lambda_f(s)\hyp{\frac{s+\nu+\epsilon}{2}}{\frac{s-\nu+\epsilon}{2}}{\frac12+\epsilon}{-w^2}y^{\frac12-s}.
\]
Similarly, making the change of variables $s\to 1-s$ and applying the functional equation for $\Lambda_f$ along with the Euler identity, we see that
\begin{align*}
\tilde{g}\left(-\frac{1}{Nz}\right)&=
\frac{(2w)^{\epsilon}}{2\pi i}\int_{(c)}\Lambda_g(s)\hyp{\frac{s+\nu+\epsilon}{2}}{\frac{s-\nu+\epsilon}{2}}{\frac12+\epsilon}{-w^2}(Ny(1+w^2))^{s-\frac12}ds
\\&=\frac{(2w)^{\epsilon}}{2\pi i}\int_{(1-c)}\Lambda_f(s)\hyp{\frac{s+\epsilon+\nu}{2}}{\frac{s+\epsilon-\nu}{2}}{\frac12+\epsilon}{-w^2}y^{\frac12-s}ds.
\end{align*}
Since the integrand is meromorphic and rapidly decaying in $s$ we can write
\begin{align*}
\tilde{f}(z) - \tilde{g}\left(-\frac{1}{Nz}\right) &= \frac{(2w)^\epsilon}{2\pi i}\left(\int_{(c)}-\int_{(1-c)}\right)\Lambda_f(s)\hyp{\frac{s+\epsilon+\nu}{2}}{\frac{s+\epsilon-\nu}{2}}{\frac12+\epsilon}{-w^2}y^{\frac{1}{2}-s}ds\\
&= \frac{(2w)^\epsilon}{2\pi i}\oint\Lambda_f(s)\hyp{\frac{s+\epsilon+\nu}{2}}{\frac{s+\epsilon-\nu}{2}}{\frac12+\epsilon}{-w^2}y^{\frac{1}{2}-s}ds.
\end{align*}
\end{proof}
Let $\alpha\in\Q_{>0}$ and $z=\alpha(1+iy)$ for $y>0$. The above Lemma now reads
\begin{equation}\label{eq:circleintegralspecial}
 \tilde{f}(z) - \tilde{g}\left(-\frac{1}{Nz}\right) 
 =\frac{(2y^{-1})^\epsilon}{2\pi i}\oint\Lambda_f(s)\hyp{\frac{s+\epsilon+\nu}{2}}{\frac{s+\epsilon-\nu}{2}}{\frac12+\epsilon}{-y^{-2}}(\alpha y)^{\frac{1}{2}-s}ds.
\end{equation}
From now on we let $\beta=-1/N\alpha$.
\begin{lemma}[Lemma 2.4 in \cite{BCK}]\label{lem:g approximation} 
Let $y\in(0,\frac12]$. With the assumptions of Theorem~\ref{thm:PolarConverse}, for any $\ell_0\geq 0$
\begin{multline*}
\tilde{g}\left(-\frac{1}{Nz}\right) =O_{\sigma,\alpha,\ell_0}(y^{2\ell_0-\sigma})+
\sum_{a\in\{0,1\}}i^{-a}
\sum_{\substack{t=0\\t\equiv a+\epsilon\bmod2}}^{2\ell_0-1}
\frac{(2\pi i N \alpha)^t}{t!}\\
\cdot\frac{1}{2\pi i}\int_{(\sigma+1)}
\Lambda_g\left(s+t,\beta,\cos^{(a)}\right)\frac
{
\gamma_f^{(-)^{\epsilon}}(1-s)}
{\gamma_f^{(-)^{\epsilon}}(1-s-2\lfloor t/2\rfloor)}\left(\frac{y}{N\alpha}\right)^{1/2-s}ds.
\end{multline*}
\end{lemma}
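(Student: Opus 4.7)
This is Lemma~2.4 of \cite{BCK} applied in our context; the plan is to combine the exact Mellin--Barnes representation of Proposition~\ref{prop:Mellin-transform} with a Taylor truncation of an explicit hypergeometric factor, then estimate the tail. Writing $-1/(Nz) = \beta + w'y' + iy'$ with $w' = y$ and $y' = y/(N\alpha(1+y^2))$, and applying Proposition~\ref{prop:Mellin-transform} to $h = \tilde{g}$ at this point (so that $\alpha$ becomes $\beta$, $w$ becomes $y$, and the Mellin variable conjugate to $y'$ is $s$), we realise $\tilde{g}(-1/Nz)$ as a sum over $j \in \{0,1\}$ of Mellin--Barnes integrals with integrand
\[
\Lambda_g(s,\beta,\cos^{(j)})\cdot\hyp{\frac{s+\nu+[j+\epsilon]}{2}}{\frac{s-\nu+[j+\epsilon]}{2}}{\frac{1}{2}+[j+\epsilon]}{-y^2}\cdot(y')^{1/2-s}.
\]
Splitting $(y')^{1/2-s} = (y/N\alpha)^{1/2-s}(1+y^2)^{s-1/2}$ and applying the Euler identity \eqref{eqn:euler identity} absorbs the power of $1+y^2$ into the hypergeometric, replacing it by $\hyp{\frac{1-s-\nu+[j+\epsilon]}{2}}{\frac{1-s+\nu+[j+\epsilon]}{2}}{\frac{1}{2}+[j+\epsilon]}{-y^2}$.

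Next I Taylor--expand this new hypergeometric in $y^2$ and truncate after the first $\ell_0$ terms. Each retained term, indexed by $(j,k)$ with $t := [j+\epsilon] + 2k$, carries a factor $y^t$; shifting $s \mapsto s+t$ in the Mellin contour converts $y^t(y/N\alpha)^{1/2-s}$ into $(N\alpha)^t(y/N\alpha)^{1/2-s}$ and shifts $\Lambda_g(s,\beta,\cos^{(j)})$ to $\Lambda_g(s+t,\beta,\cos^{(j)})$. The contour is then moved back to $\Re s = \sigma+1$; by (the $\Lambda$-analogue of) Lemma~\ref{lem:Lcos and Lsin via twists}, the additive twist decomposes into a finite linear combination of character twists which are meromorphic under the hypotheses of Theorem~\ref{thm:PolarConverse}, controlling any residues encountered. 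The shifted Pochhammer coefficient is $((1-s-2k-\nu)/2)_k\,((1-s-2k+\nu)/2)_k/((1/2+[j+\epsilon])_k\,k!)$; using $\Gamma_\mathbb{R}(x+2k)/\Gamma_\mathbb{R}(x) = \pi^{-k}(x/2)_k$, the numerator rewrites as $\pi^{2k}\gamma_f^{(-)^\epsilon}(1-s)/\gamma_f^{(-)^\epsilon}(1-s-2\lfloor t/2\rfloor)$, and with the identity $(1/2+[j+\epsilon])_k\,k! = t!/4^k$ the remaining numerical constants collapse to $i^{-a}(2\pi iN\alpha)^t/t!$ with $a = j$. This produces precisely the displayed main sum.

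For the error, the Taylor tail of the hypergeometric is $O_{\ell_0}(y^{2\ell_0})$ pointwise, and its Pochhammer coefficients grow only polynomially in $|\Im s|$ along the line $\Re s = \sigma+1$. This polynomial growth is absorbed by the Stirling decay of the gamma factors in $\Lambda_g(s,\beta,\cos^{(j)})$, which is uniform on vertical strips by Phragm\'{e}n--Lindel\"of applied to the functional equation \eqref{eq:FE} together with the polynomial boundedness hypothesis; this yields the asserted bound $O_{\sigma,\alpha,\ell_0}(y^{2\ell_0 - \sigma})$ after absolute integration. Establishing this uniform tail bound is the only technically delicate ingredient; the rest of the argument is a sequence of routine Mellin--Barnes identities.
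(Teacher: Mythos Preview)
Your sketch is correct and is precisely the argument behind Lemma~2.4 of \cite{BCK}, which is all the paper invokes here: the paper gives no independent proof, only the remark that an inspection of \cite{BCK} shows the computation goes through verbatim for the Fourier--Whittaker series $\tilde{g}$ even though $g$ is not yet known to be a Maass form. Your Mellin--inversion/Euler--identity/Taylor--truncation outline is exactly that computation; one small comment is that the contour shift back to $\Re s=\sigma+1$ does not actually cross any poles (since $\Re(s+t)\ge\sigma+1$ throughout, keeping $\Lambda_g(s+t,\beta,\cos^{(a)})$ in its region of absolute convergence), so the appeal to meromorphy of character twists at that step is unnecessary.
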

\begin{remark}
Although Lemma 2.4 in \cite{BCK} is stated for a function $F$ that is related to a Maass form, an inspection of the proof shows that we can still apply it to $g$.
\end{remark}
Suppose that $0<y<1$. By \eqref{eq:hypergeometric continuation}
\begin{multline}\label{eqn:hypergeometricoutsidedisc}
(2y^{-1})^{\epsilon}\hyp{\frac{s+\epsilon+\nu}{2}}{\frac{s+\epsilon-\nu}{2}}{\frac12+\epsilon}{-y^{-2}}(\alpha y)^{\frac12-s}\\
=\sum_{\pm}\frac{\Gamma(\mp\nu)\pi^{\epsilon+\frac12}\alpha^{\frac12-s}}{\Gamma\left(\frac{s+\epsilon\mp\nu}{2}\right)\Gamma\left(\frac{1-s+\epsilon\mp\nu}{2}\right)}\sum_{k=0}^{\ell_0-1}\frac{\left(\frac{s+\epsilon\pm\nu}{2}\right)_k\left(\frac{s-\epsilon\pm\nu+1}{2}\right)_k}{k!(1\pm\nu)_k}(-1)^k y^{2k\pm\nu+\frac12}+O_{\ell_0,\nu,s}\left(y^{2\ell_0}\right).
\end{multline}
Moreover, if $s$ is in a fixed compact set then we can choose the error terms to be independent of $s$. For $\ell_0\geq 0$, we therefore conclude, for any $0<y<1$,
\begin{multline}\label{eqn:Ik}
\frac{(2y^{-1})^{\epsilon}}{2\pi i}\oint\Lambda_f(s)\hyp{\frac{s+\epsilon+\nu}{2}}{\frac{s+\epsilon-\nu}{2}}{\frac12+\epsilon}{-y^{-2}}(\alpha y)^{\frac{1}{2}-s}ds \\
=\sum_{k=0}^{\ell_0-1}\sum_{\pm}\mathcal{I}^{\pm}_k(\alpha) y^{\pm\nu+2k+\frac12}+O_{\ell_0,\nu,\sigma}(y^{2\ell_0}),
\end{multline}
where the integral is taken over any circle containing all poles of $\Lambda_f(s)$ and
\begin{align}
\mathcal{I}^{\pm}_k(\alpha) &=(-1)^k\frac{\Gamma\left(\mp\nu\right)\sqrt{\pi}}{k! \left(1\pm\nu\right)_k}\cdot\frac{1}{2\pi i} \oint\Lambda_f(s)\frac{\left(\frac{s+\epsilon\pm\nu}{2}\right)_k \left(\frac{s-\epsilon\pm\nu+1}{2}\right)_k}
{\Gamma\left(\frac{s+\epsilon\mp\nu}{2}\right)\Gamma\left(\frac{1-s+\epsilon\mp\nu}{2}\right)}\alpha^{\frac12-s}ds.
\end{align}
Analoguously to \cite[Section~2]{WCTWP}, we make the following definition.
\begin{definition}
For $\nu\in\mathbb{C}\setminus\{0\}$ and any open interval $(a,b)\subset\mathbb{R}$, denote by $\mathcal{M}^\nu(a,b)$ the set of meromorphic functions which are holomorphic on $a\leq\Re(s)\leq b$, except for at most simple poles in the sets $\pm\nu+\Z$, and bounded on $\{s\in\mathbb{C}:\Re(s)\in[c,d],|\Im(s)|\geq1\}$ for each compact $[c,d]\subset(a,b)$. 
\end{definition}
We will also consider the following subsets of $\mathcal{M}^\nu(a,b)$: 
\begin{equation}
\mathcal{M}^{\nu}_{t}(a,b)=\{f\in\mathcal{M}^\nu(a,b):f\text{ holomorphic at }s\in2\mathbb{Z}+t+1\pm\nu\},~t\in\mathbb{Z},
\end{equation}
\begin{equation}
\mathcal{H}(a,b)=\{f\in\mathcal{M}^\nu(a,b):f\text{ holomorphic at }s\in\mathbb{Z}\pm\nu\}.
\end{equation}

\begin{lemma}\label{HolomorphicFunction.lemma}
For $\alpha\in\Q_{>0}$ the following function is in $\mathcal{H}(\sigma-2\ell_0,\infty)$,
\begin{multline}\label{eqn:holfun}
H_\alpha(s) = (N\alpha^2)^{s-\frac12}\sum_{a\in\{0,1\}}i^{-a}\sum_{\substack{t=0\\t\equiv a+\epsilon\bmod2}}^{2\ell_0-1}
\frac{(2\pi i N \alpha)^t}{t!}
\Lambda_g\left(s+t,\beta,\cos^{(a)}\right)\frac
{
\gamma_f^{(-)^{\epsilon}}(1-s)}
{\gamma_f^{(-)^{\epsilon}}(1-s-2\lfloor t/2\rfloor)}\\
-i^{-\epsilon}\pi^{\epsilon}\Lambda_f\left(s,\alpha,\cos^{(\epsilon)}\right)
+\alpha^{s-\frac12}\sum_{k=0}^{\ell_0-1}\left(\frac{\mathcal{I}^{+}_k(\alpha)}{s+\nu+2k}+\frac{\mathcal{I}^{-}_k(\alpha)}{s-\nu+2k}\right).
\end{multline}
\end{lemma}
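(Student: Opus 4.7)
The plan is to realise $H_\alpha(s)$, up to an elementary factor, as the shifted Mellin transform in $y$ of a small error function, and then read off holomorphy in the half-plane from the decay of that error near $y=0$. The three ingredients already in place are: the circle-integral identity \eqref{eq:circleintegralspecial}; its small-$y$ expansion \eqref{eqn:Ik} produced by \eqref{eqn:hypergeometricoutsidedisc}; and the Mellin--Barnes expansion of $\tilde g(-1/Nz)$ supplied by Lemma~\ref{lem:g approximation}.

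First, I would combine \eqref{eq:circleintegralspecial} and \eqref{eqn:Ik} to get, for $z=\alpha(1+iy)$ and $0<y<1$,
\[
\tilde f(z) - \tilde g\left(-\frac{1}{Nz}\right) = \sum_{k=0}^{\ell_0-1}\sum_{\pm}\mathcal{I}_k^{\pm}(\alpha)\,y^{\pm\nu+2k+\frac12} + O_{\ell_0,\nu,\sigma}(y^{2\ell_0}).
\]
Substituting the asymptotic expansion from Lemma~\ref{lem:g approximation} for $\tilde g(-1/Nz)$ and rearranging, the function
\[
E(y) := \tilde f(\alpha+i\alpha y) - \mathcal{J}(y) - \sum_{k,\pm}\mathcal{I}_k^{\pm}(\alpha)\,y^{\pm\nu+2k+\frac12},
\]
where $\mathcal{J}(y)$ denotes the Mellin--Barnes integral appearing in Lemma~\ref{lem:g approximation}, satisfies $E(y)=O(y^{2\ell_0-\sigma})$ as $y\to 0^+$.

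Next, I would apply the shifted Mellin transform $\widetilde{\mathcal{M}}$ term-by-term. By Proposition~\ref{prop:Mellin-transform} with $w=0$ and the change of variables $u=\alpha y$, one has $\widetilde{\mathcal{M}}[\,\tilde f(\alpha+i\alpha y)\,](s)=i^{-\epsilon}\alpha^{1/2-s}\Lambda_f(s,\alpha,\cos^{(\epsilon)})$; the integral $\mathcal{J}(y)$ is, by construction, $\widetilde{\mathcal{M}}^{-1}$ of $(N\alpha)^{s-\frac12}$ times the inner summand of $H_\alpha$, so Mellin inversion returns precisely that summand; and each $y^{\pm\nu+2k+\frac12}$ contributes, after contour shifting from $\Re(s)=c\gg 0$, the simple pole $\mathcal{I}_k^{\pm}(\alpha)/(s\pm\nu+2k)$. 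Multiplying through by $-\alpha^{s-1/2}$, the algebraic side reassembles to $H_\alpha(s)$ (up to the explicit constants built into its definition), while the other side is $-\alpha^{s-1/2}\widetilde{\mathcal{M}}[E](s)$. Since $E(y)=O(y^{2\ell_0-\sigma})$ near $0$, the integral $\int_0^1 E(y)\,y^{s-1/2}\,dy/y$ is holomorphic in $\Re(s)>\sigma-2\ell_0$; the contribution from $y\geq 1$ is handled by the Whittaker-series decay of $\tilde f$ together with the fact that the remaining polynomial tails can be absorbed into explicit entire functions of $s$. Polynomial boundedness on vertical strips then follows from the vertical-strip bounds on the $\Lambda$'s and standard Mellin-transform estimates, placing $H_\alpha$ in $\mathcal{H}(\sigma-2\ell_0,\infty)$.

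The main obstacle is the rigorous interchange of Mellin transformation with the various operations above, and in particular the cancellation of the apparent simple poles at $s\in\pm\nu+2\mathbb{Z}$. The power terms $y^{\pm\nu+2k+\frac12}$ are not individually Mellin-transformable on $(0,\infty)$, so one must split $\int_0^\infty=\int_0^1+\int_1^\infty$ and realise the fractions $\mathcal{I}_k^{\pm}(\alpha)/(s\pm\nu+2k)$ via contour shifting. Equivalently, one must verify that the residues these fractions would contribute at $s=\mp\nu-2k$ cancel exactly against the polar contributions of the trivial-character component of $\Lambda_g(s+t,\beta,\cos^{(a)})$ (isolated via Lemma~\ref{lem:Lcos and Lsin via twists}) and of $\Lambda_f(s,\alpha,\cos^{(\epsilon)})$, together with the poles and zeros of the gamma ratio $\gamma_f^{(-)^\epsilon}(1-s)/\gamma_f^{(-)^\epsilon}(1-s-2\lfloor t/2\rfloor)$. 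The residue definition of $\mathcal{I}_k^{\pm}(\alpha)$ built into \eqref{eqn:Ik} is precisely what forces these cancellations.
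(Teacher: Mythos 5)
Your argument is essentially the paper's proof: the paper forms $F_\alpha(y)=\mathcal{J}(y)-\tilde f(\alpha+i\alpha y)+\chi_{(0,1)}(y)\bigl(\tilde f(z)-\tilde g(-1/Nz)\bigr)$ with $z=\alpha(1+iy)$, observes via Lemmas \ref{CircleIntegral.lemma}, \ref{lem:g approximation} and \eqref{eqn:Ik} that it is $O(y^{2\ell_0-\sigma})$ as $y\to0$ (and decays for $y\geq 1$), and identifies $\alpha^{s-\frac12}\widetilde{\mathcal{M}}(F_\alpha)(s)$ with $H_\alpha(s)$ term by term: Mellin inversion for $\mathcal{J}$, Proposition~\ref{prop:Mellin-transform} with $w=0$ for the $\tilde f$-term, and $\int_0^1 y^{s\pm\nu+2k}\,\frac{dy}{y}=(s\pm\nu+2k)^{-1}$ for the power terms. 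The ``main obstacle'' you flag dissolves once the powers $y^{\pm\nu+2k+\frac12}$ are cut off at $y=1$, exactly as in the paper: the fractions are then literal Mellin transforms over $(0,1)$, holomorphy on $\Re s>\sigma-2\ell_0$ follows automatically from the decay of $F_\alpha$ at $0$, and no verification of residue cancellation against the polar parts of $\Lambda_g$, $\Lambda_f$ or the gamma ratio is needed. In particular there are no tails over $[1,\infty)$ to ``absorb into explicit entire functions of $s$'' --- where such tail integrals converge they equal $-(s\pm\nu+2k)^{-1}$, which is not entire; the correct move is simply not to subtract the powers there.
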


\begin{proof}
Let $\chi_{(0,1)}$ be the characteristic function of the interval $(0,1)$. By Lemmas \ref{CircleIntegral.lemma} and \ref{lem:g approximation} we have
\begin{multline}
F_\alpha(y):=\sum_{a\in\{0,1\}}i^{-a}
\sum_{\substack{t=0\\t\equiv a+\epsilon\bmod2}}^{2\ell_0-1}
\frac{(2\pi i N \alpha)^t}{t!}\\
\cdot\frac{1}{2\pi i}\int_{(\sigma+1)}
\Lambda_g\left(s+t,\beta,\cos^{(a)}\right)\frac
{
\gamma_f^{(-)^{\epsilon}}(1-s)
}
{
\gamma_f^{(-)^{\epsilon}}(1-s-2\lfloor t/2\rfloor)
}\left(\frac{y}{N\alpha}\right)^{1/2-s}ds\\
 -\tilde{f}(\alpha+i\alpha y)+\chi(y)\frac{(2y^{-1})^{\epsilon}}{2\pi i}\oint\Lambda_f(s)\hyp{\frac{s+\epsilon+\nu}{2}}{\frac{s+\epsilon-\nu}{2}}{\frac12+\epsilon}{-y^{-2}}(\alpha y)^{\frac{1}{2}-s}ds\\
=O_{\nu,\sigma,\alpha,\ell_0}(y^{2\ell_0-\sigma}).
\end{multline}
Hence we have that the Mellin transform $\int_0^{\infty}\alpha^{s-\frac12}F_\alpha(y)y^{s-\frac12}\frac{dy}{y}$ is in $\mathcal{H}\left(\sigma-2\ell_0,\infty\right)$ and indeed this equals $H_\alpha(s)$. Each term in $H_\alpha(s)$ is the Mellin transform of the corresponding term in $\alpha^{s-\frac12}F_\alpha(y)$. The first follows from Mellin inversion, while for the second term we use Proposition \ref{prop:Mellin-transform}. The last term is
\begin{equation}
\alpha^{s-\frac12}\int_0^{\infty}\chi(y)\sum_{k=0}^{\ell_0-1}\sum_{\pm}\mathcal{I}^{\pm}_k(\alpha)y^{\pm\nu+2k+\frac12}y^{s-\frac12}\frac{dy}{y}
=\alpha^{s-\frac12}\sum_{k=0}^{\ell_0-1}\sum_{\pm}\frac{\mathcal{I}^{\pm}_k(\alpha)}{s\pm\nu+2k}.
\end{equation}
\end{proof}
Suppose $\beta=\frac{u}{v}\in\mathbb{Q}^{\times}$ with $(u,v)=1$ and $u>0$. Given $\mathcal{P}$ as in Theorem~\ref{thm:PolarConverse}, we introduce the infinite set
\[
T_\beta := \left\{ \frac{p}{u}\in\mathbb{Q}_{>0}: \, p\equiv u\mod v,~p\in \mathcal{P}\right\}.
\]
An important feature of the sets $T_\beta$ is that if $\lambda\in T_\beta$, then  $\Lambda_g\left(s,\lambda\beta,\cos^{(j)}\right)=\Lambda_g\left(s,\beta,\cos^{(j)}\right)$.

Consider $t_0\in\mathbb{Z}_{\geq0}$ and any subset $T_{\beta,M}\subset T_{\beta}$ of cardinality $M\geq 2\ell_0 > t_0$. For each $\lambda\in T_{\beta,M}$, since the Vandermonde determinant does not vanish, there exist $c_{\lambda}\in\mathbb{C}$ such that
\begin{equation}\label{Vandermonde}
\sum_{\lambda\in T_{\beta,M}}c_{\lambda}\lambda^{-t}=\delta_{t_0}(t),~t\in\{0,1,\cdots,2\ell_0-1\}.
\end{equation}

\begin{lemma}\label{lem:isolated term} 
Let $\alpha\in\Q_{>0}$, $t_0\in\mathbb{Z}_{\geq0}$, $T_{\beta,M}$ of size $M\geq 2\ell_0> t_0$, and $c_\lambda\in\Q$ be as in \eqref{Vandermonde}. The following function is in $\mathcal{H}(t_0+\sigma-2\ell_0,\infty)$:
\begin{multline}\label{eq:isolatedterm}
i^{-[\epsilon+t_0]}(N\alpha^2)^{s-\frac12}\alpha^{-t_0}
\frac{(2\pi i)^{t_0}}{t_0!}
\Lambda_g\left(s,\beta,\cos^{([\epsilon+t_0])}\right)\frac
{
\gamma_f^{(-)^{\epsilon}}(1-s+t_0)
}
{
\gamma_f^{(-)^{\epsilon}}(1-s+[t_0])
}\\
-\sum_{\lambda\in T_{\beta,M}}c_\lambda\lambda^{2s-2t_0-1}
\left(
(-i\pi)^{\epsilon}\Lambda_f\left(s-t_0,\alpha\lambda^{-1},\cos^{(\epsilon)}\right)-(\lambda^{-1}\alpha)^{s-t_0-\frac12}\sum_{k=0}^{\ell_0-1}
\sum_{\pm}\frac{\mathcal{I}^{\pm}_k(\alpha\lambda^{-1})}{s-t_0\pm\nu+2k}
\right).
\end{multline}
\end{lemma}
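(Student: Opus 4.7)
The plan is to start from Lemma~\ref{HolomorphicFunction.lemma}, substitute $\alpha$ by $\alpha/\lambda$ for each $\lambda\in T_{\beta,M}$, shift $s\mapsto s-t_0$, multiply by the entire function $c_\lambda\lambda^{2s-2t_0-1}$, and sum over $\lambda\in T_{\beta,M}$. Because $T_\beta$ was engineered precisely so that $\lambda\beta$ and $\beta$ differ by an integer whenever $\lambda\in T_\beta$, the additive twists satisfy $\Lambda_g\bigl(s,\lambda\beta,\cos^{(a)}\bigr)=\Lambda_g\bigl(s,\beta,\cos^{(a)}\bigr)$. Consequently, replacing $\alpha$ by $\alpha/\lambda$ in \eqref{eqn:holfun} only alters the $\lambda$-free Dirichlet data via explicit power-of-$\lambda$ prefactors. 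Since Lemma~\ref{HolomorphicFunction.lemma} gives $H_{\alpha/\lambda}(s-t_0)\in\mathcal{H}(t_0+\sigma-2\ell_0,\infty)$ and this class is stable under multiplication by entire functions of $s$ and finite linear combinations, the function
\[
G(s):=\sum_{\lambda\in T_{\beta,M}} c_\lambda\,\lambda^{2s-2t_0-1}\,H_{\alpha/\lambda}(s-t_0)
\]
automatically belongs to $\mathcal{H}(t_0+\sigma-2\ell_0,\infty)$. The whole argument then reduces to identifying $G(s)$ with the expression in \eqref{eq:isolatedterm}.

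The main step is to unfold $G(s)$ by inserting the definition of $H_{\alpha/\lambda}(s-t_0)$ and separating the three contributions: the double sum over $(a,t)$, the $\Lambda_f$-term, and the $\mathcal{I}_k^\pm$-term. For fixed $a\in\{0,1\}$ and $t\in\{0,\ldots,2\ell_0-1\}$ with $t\equiv a+\epsilon\bmod 2$, tracking the prefactors $(N(\alpha/\lambda)^2)^{s-t_0-1/2}(2\pi i N\alpha/\lambda)^t$ shows that the combined $\lambda$-dependence is precisely $\lambda^{2s-2t_0-1}\cdot\lambda^{-(2s-2t_0-1)}\cdot\lambda^{-t}=\lambda^{-t}$. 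Summing against $c_\lambda$ and applying the Vandermonde relation \eqref{Vandermonde} collapses the inner sum to $\delta_{t_0}(t)$, so only the pair $(a,t)=([\epsilon+t_0],t_0)$ survives. A direct computation that matches the remaining powers of $N$ and $\alpha$ (using $t_0-2\lfloor t_0/2\rfloor=[t_0]$ to rewrite the ratio of gamma factors) then identifies this surviving summand with the first term of \eqref{eq:isolatedterm}.

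The remaining two pieces of $G(s)$ require no cancellation and reproduce the subtracted sum in \eqref{eq:isolatedterm} verbatim, using only the elementary identity $i^{-\epsilon}\pi^\epsilon=(-i\pi)^\epsilon$ valid for $\epsilon\in\{0,1\}$. Combining the three identifications, $G(s)$ equals the expression displayed in \eqref{eq:isolatedterm}, so the latter lies in $\mathcal{H}(t_0+\sigma-2\ell_0,\infty)$ as claimed. The only real obstacle is careful bookkeeping of prefactors in the second step: matching the explicit powers of $N$, $\alpha$, and $2\pi i$, and verifying that the shift $s\mapsto s-t_0$ transforms the ratio $\gamma_f^{(-)^\epsilon}(1-s)/\gamma_f^{(-)^\epsilon}(1-s-2\lfloor t_0/2\rfloor)$ into the ratio appearing in \eqref{eq:isolatedterm}. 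No new analytic input is required beyond what has already been established in Lemma~\ref{HolomorphicFunction.lemma} and the construction of $T_\beta$.
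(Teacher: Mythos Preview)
Your proposal is correct and follows exactly the same approach as the paper: form $G(s)=\sum_{\lambda\in T_{\beta,M}}c_\lambda\lambda^{2s-2t_0-1}H_{\alpha/\lambda}(s-t_0)$, observe that this lies in $\mathcal{H}(t_0+\sigma-2\ell_0,\infty)$ by Lemma~\ref{HolomorphicFunction.lemma}, and then use the Vandermonde relation~\eqref{Vandermonde} together with $\Lambda_g(s,\lambda\beta,\cos^{(a)})=\Lambda_g(s,\beta,\cos^{(a)})$ to collapse the double sum to the single $(a,t)=([\epsilon+t_0],t_0)$ term. Your account of the $\lambda$-power bookkeeping and the gamma-ratio shift is more explicit than the paper's terse version, but the argument is the same.
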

\begin{proof}
Recall that for every $\lambda\in T_{\beta,M}$ we have $\Lambda_g\left(s,\lambda\beta,\cos^{(j)}\right)=\Lambda_g\left(s,\beta,\cos^{(j)}\right)$. The function in \eqref{eq:isolatedterm} is $\sum_{\lambda\in T_{\beta,M}}c_\lambda\lambda^{2s-2t_0-1}H_{\lambda^{-1}\alpha}(s-t_0)$ and hence the statement follows from Lemma \ref{HolomorphicFunction.lemma}. We just note that an important step in the calculation is applying \eqref{Vandermonde} as follows:
\begin{multline}\label{eq:hol}
\sum_{\lambda\in T_{\beta,M}}c_{\lambda}\lambda^{2s-1}(N(\lambda^{-1}\alpha)^2)^{s-\frac12}\sum_{a\in\{0,1\}}i^{-a}\sum_{\substack{t=0\\t\equiv a+\epsilon\bmod2}}^{2\ell_0-1}
\frac{(2\pi i N\lambda^{-1}\alpha)^t}{t!}\\
\cdot
\Lambda_g\left(s+t,\lambda\beta,\cos^{(a)}\right)\frac
{
\gamma_f^{(-)^{\epsilon}}(1-s)}
{\gamma_f^{(-)^{\epsilon}}(1-s-2\lfloor t/2\rfloor)}\\
=i^{-[\epsilon+t_0]}\left(N\alpha^2\right)^{s-\frac12}
\frac{(2\pi i N\alpha)^{t_0}}{t_0!}
\Lambda_g\left(s+t_0,\beta,\cos^{([\epsilon+t_0])}\right)\frac
{
\gamma_f^{(-)^{\epsilon}}(1-s)}
{\gamma_f^{(-)^{\epsilon}}(1-s-2\lfloor t_0/2\rfloor)}.
\end{multline}
\end{proof}
In particular, the following function is in $\mathcal{M}^v(t_0+\sigma-M+2,\infty)$:
\begin{multline}\label{eq:merfn}
i^{-[\epsilon+t_0]}(N\alpha^2)^{s-\frac12}\alpha^{-t_0}
\frac{(2\pi i)^{t_0}}{t_0!}
\Lambda_g\left(s,\beta,\cos^{([\epsilon+t_0])}\right)\frac
{
\gamma_f^{(-)^{\epsilon}}(1-s+t_0)
}
{
\gamma_f^{(-)^{\epsilon}}(1-s+[t_0])
}\\
-(-i\pi)^\epsilon\sum_{\lambda\in T_{\beta,M}}c_\lambda\lambda^{2s-2t_0-1}
\Lambda_f\left(s-t_0,\alpha\lambda^{-1},\cos^{(\epsilon)}\right).
\end{multline}
 In fact, \eqref{eq:merfn} is in $\mathcal{M}^\nu_{t_0}(t_0+\sigma-M+2,\infty)$.
\begin{proposition}\label{AnalyticPropertiesTwists.proposition}
Let $q\in\mathcal{P}\cup\{1\}$ and let $b\in\mathbb{Z}$ be coprime to $Nq$. Under the assumptions of Theorem~\ref{thm:PolarConverse}, if $\beta=\frac{b}{Nq}$ then, for any $\delta\in\{0,1\}$, both $\Lambda_f\left(s,\beta,\cos^{(\delta)}\right)$ and $\Lambda_g\left(s,\beta,\cos^{(\delta)}\right)$ continue to elements of $\mathcal{M}^\nu\left(-\infty,\infty\right)$.
\end{proposition}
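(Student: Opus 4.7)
The plan is to exploit the explicit identity furnished by Lemma~\ref{lem:isolated term}, which shows that the expression \eqref{eq:merfn}, namely
\[
A(s)\Lambda_g\left(s,\beta,\cos^{([\epsilon+t_0])}\right) - (-i\pi)^{\epsilon}\sum_{\lambda \in T_{\beta,M}}c_\lambda \lambda^{2s-2t_0-1}\Lambda_f\left(s-t_0,\alpha\lambda^{-1},\cos^{(\epsilon)}\right),
\]
lies in $\mathcal{M}^\nu_{t_0}(t_0+\sigma-M+2,\infty)$, where $A(s)$ is the explicit (entire, polynomial) factor built from gamma ratios. The strategy is to pick the free parameters $\alpha$ and $T_{\beta,M}$ so that every additive twist of $\Lambda_f$ on the right is converted, via Lemma~\ref{lem:Lcos and Lsin via twists}, into a linear combination of character twists whose meromorphy is already postulated in Theorem~\ref{thm:PolarConverse}.

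Given $\beta = b/(Nq)$ with $\gcd(b,Nq)=1$, I would first invoke the parity symmetry $\Lambda_g(s,-\beta,\cos^{(\delta)})=(-1)^{\delta}\Lambda_g(s,\beta,\cos^{(\delta)})$ to reduce to the case that $\alpha := q/b$ lies in $\Q_{>0}$, matching the relation $\beta = -1/(N\alpha)$ used in the derivation of \eqref{eq:merfn}. For $\lambda = p/b \in T_\beta$, with $p \in \mathcal{P}$ satisfying $p \equiv b \pmod{Nq}$ and $p \neq q$, one computes $\alpha\lambda^{-1} = q/p$, which is a rational number whose lowest-terms denominator is the prime $p \in \mathcal{P}$. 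Applying Lemma~\ref{lem:Lcos and Lsin via twists} therefore expresses $\Lambda_f(s-t_0, q/p, \cos^{(\epsilon)})$ as a linear combination of $\Lambda_f(s-t_0,\psi)$ over primitive Dirichlet characters $\psi$ modulo $p$, together with a term involving $\Lambda_f(s-t_0)$ and $\Lambda_f(s-t_0,\psi_0)$. All of these are meromorphic on $\C$ by the assumptions of Theorem~\ref{thm:PolarConverse}.

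Solving the identity for $\Lambda_g(s,\beta,\cos^{([\epsilon+t_0])})$ yields a meromorphic continuation to the half-plane $\Re(s) > t_0 + \sigma - M + 2$. Since $\mathcal{P}$ contains infinitely many primes in every admissible residue class, the cardinality $M$ of $T_{\beta,M}$ may be taken arbitrarily large, extending the meromorphic continuation to all of $\C$. Letting $t_0 \in \{0,1\}$ covers both parities $[\epsilon+t_0] \in \{0,1\}$, giving both $\delta \in \{0,1\}$. The corresponding result for $\Lambda_f(s,\beta,\cos^{(\delta)})$ is obtained by running the same argument with the roles of $f$ and $g$ (and of $\alpha$ and $\beta$) exchanged, or equivalently by transporting the result through Proposition~\ref{prop: SinCosFE}.

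The main obstacle is verifying membership in $\mathcal{M}^\nu(-\infty,\infty)$ rather than merely meromorphy, i.e.\ controlling the pole set. Because \eqref{eq:merfn} itself lies in $\mathcal{M}^\nu_{t_0}$, any pole of $A(s)\Lambda_g(s,\beta,\cos^{([\epsilon+t_0])})$ outside $\pm\nu+\Z$ must be cancelled by a pole of the $\Lambda_f$-sum at the same point. I would handle this by comparing the identity produced from two disjoint choices $T_{\beta,M}$ and $T'_{\beta,M}$ (again available since $\mathcal{P}$ is rich in every residue class); their difference is still in $\mathcal{M}^\nu_{t_0}$ and involves $\Lambda_f$-twists by characters of pairwise distinct prime moduli, whose linear independence forces each spurious pole of $\Lambda_f(s,\psi)$ contributing to the sum to vanish. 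The required boundedness on vertical strips away from poles is then inherited from the corresponding property of the individual $\Lambda_f(s,\psi)$ and $\Lambda_g(s,\psi)$ combined with the Phragm\'{e}n--Lindel\"of principle applied to $A(s)$ and to $P(s)\Lambda_f(s)$.
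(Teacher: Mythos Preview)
Your direct approach has a genuine gap in the even case $\epsilon=0$. When you expand $\Lambda_f(s-t_0,\alpha\lambda^{-1},\cos)$ via Lemma~\ref{lem:Lcos and Lsin via twists}, the formula contains the term $\Lambda_f(s-t_0)-\frac{p}{p-1}\Lambda_f(s-t_0,\psi_0)$, where $\psi_0$ is the principal character modulo $p$. The hypotheses of Theorem~\ref{thm:PolarConverse} concern only \emph{primitive} characters, and without an Euler product there is no way to express $L_f(s,\psi_0)=\sum_{(n,p)=1}a_nn^{-s}$ in terms of the assumed data; its meromorphic continuation is simply unavailable. Moreover, even the primitive twists $\Lambda_f(s,\psi)$ are only assumed \emph{meromorphic}, with completely uncontrolled pole sets, so substituting them back into \eqref{eq:merfn} yields at best a meromorphic $\Lambda_g(s,\beta,\cos^{(\delta)})$ with poles potentially anywhere---not an element of $\mathcal{M}^\nu$. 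Your proposed remedy of comparing two disjoint sets $T_{\beta,M}$ does not address either issue: the $\psi_0$ terms persist (now for two different collections of moduli), and linear independence of $L$-functions says nothing about the location of poles of individual summands in a linear combination known only to lie in $\mathcal{M}^\nu_{t_0}$.

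The paper's proof circumvents both problems at once by differencing over \emph{two values of $\beta$} rather than two choices of $T_{\beta,M}$. One takes a second prime $q'\in\mathcal{P}$ and sets $\beta'=b/(Nq')$ with the \emph{same} numerator $b$, so that $T_\beta\cap T_{\beta'}$ is infinite and a common $T_M$ may be used in \eqref{eq:merfn} for both. For each $\lambda\in T_M$ the rationals $\alpha\lambda^{-1}$ and $\alpha'\lambda^{-1}$ share the \emph{same} prime denominator $p$; hence in the difference $\Lambda_f(s-t_0,\alpha\lambda^{-1},\cos^{(\epsilon)})-\Lambda_f(s-t_0,\alpha'\lambda^{-1},\cos^{(\epsilon)})$ the principal-character piece cancels, and Proposition~\ref{prop: SinCosFE} shows the difference is \emph{holomorphic} on $\Re s<t_0-\sigma$ (being a combination of $\Lambda_g(1-s+t_0,\overline{\psi})$ for primitive $\psi$, absolutely convergent there). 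This holomorphy, not mere meromorphy, is what forces the gamma-ratio-weighted combination $\alpha^{2s-t_0-1}\Lambda_g(s,\beta,\cdot)-\alpha'^{2s-t_0-1}\Lambda_g(s,\beta',\cdot)$ into $\mathcal{M}^\nu_{t_0}(-\infty,t_0-\sigma)$; dividing by the gamma ratio and then varying $t_0$ (of fixed parity) separates the two terms and pushes the domain to all of $\mathbb{C}$.
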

\begin{proof}
We will present the proof for $\Lambda_g\left(s,\beta,\cos^{(\delta)}\right)$. Reversing the roles of $f$ and $g$, one may recover the result for $\Lambda_f\left(s,\beta,\cos^{(\delta)}\right)$. Observe that we can replace $b$ with $-b'$, where $b'\in\mathcal{P}$ such that $b'\equiv -b\bmod Nq$. Indeed $\Lambda_f(s,\beta,\cos^{(\delta)}) = \Lambda_f(s,-\frac{\tilde{b'}}{Nq},\cos^{(\delta)})$. Therefore we may assume $\beta<0$, $\alpha=-1/N\beta>0$ and $-b\in\mathcal{P}$. 

Consider $q'\in\mathcal{P}$ such that $q'\neq q$ and $(b,q')=1$. Let $\beta = \frac{b}{Nq}$ and $\beta'=\frac{b}{Nq'}$ with $b\in\mathcal{P}$. As $\beta$ and $\beta'$ have the same numerator, $T_{\beta}\cap T_{\beta'}$ is infinite. Let  $t_0\in\Z_{\geq 0}$ and choose a subset $T_M\subset T_{\beta}\cap T_{\beta'}$ of cardinality $M>t_0$ and $c_\lambda$ such that \eqref{Vandermonde} is satisfied. By considering the difference of equation~\eqref{eq:merfn} evaluated at $\beta$ and $\beta'$, we see that the following function is in $\mathcal{M}^\nu_{t_0}(t_0+\sigma-M+2,\infty)$:
\begin{multline}\label{eq:diff2}
\frac
{
\gamma_f^{(-)^{\epsilon}}(1-s+t_0)
}
{
\gamma_f^{(-)^{\epsilon}}(1-s+[t_0])
}
\cdot\left(\alpha^{2s-t_0-1}\Lambda_g\left(s,\beta,\cos^{([\epsilon+t_0])}\right)-\alpha'^{2s-t_0-1}\Lambda_g\left(s,\beta',\cos^{([\epsilon+t_0])}\right)\right).\\
-\frac{i^{[\epsilon+t_0]}(-i\pi)^\epsilon t_0!}{N^{s-\frac12}(2\pi i)^{t_0}}\sum_{\lambda\in T_M}c_\lambda \lambda^{2s-2t_0-1}
\left(\Lambda_f\left(s-t_0,\alpha\lambda^{-1},\cos^{(\epsilon)}\right)-\Lambda_f\left(s-t_0,\alpha'\lambda^{-1},\cos^{(\epsilon)}\right)\right)\\
\end{multline}
The assumption that $a_n=O(n^{\sigma})$ and the fact that the poles of $\gamma_f^{\pm}(s)$ lie in the half plane $\Re s < |\nu| < 1$ imply that, for all $\lambda\in T_M$, $\Lambda_f\left(s,\alpha\lambda^{-1},\cos^{(n)}\right)$ is holomorphic in the half plane $\Re s>\sigma+1$. The functional equation in Proposition \ref{prop: SinCosFE} hence implies that 
\begin{equation}\label{eqn:LambdaFDifference}
\Lambda_f\left(s-t_0,\alpha\lambda^{-1},\cos^{(\epsilon)}\right)-\Lambda_f\left(s-t_0,\alpha'\lambda^{-1},\cos^{(\epsilon)}\right)
\end{equation}
is in $\mathcal{H}(-\infty, t_0-\sigma)$. Note that in the case $\epsilon=1$, each of the terms in \eqref{eqn:LambdaFDifference} is in $\mathcal{H}(-\infty,t_0-\sigma)$ by Proposition \ref{prop: SinCosFE}. 

We deduce that, for every $t_0\in\mathbb{Z}_{\geq0}$, the following function is in $\mathcal{M}^\nu_{t_0}(t_0+\sigma-M+2, t_0-\sigma)$:
\begin{equation}\label{eq:gammadiff}
\frac
{\gamma_f^{(-)^{\epsilon}}(1-s+t_0)}
{\gamma_f^{(-)^{\epsilon}}(1-s+[t_0])}\left(\alpha^{2s-t_0-1}\Lambda_g\left(s,\beta,\cos^{([\epsilon+t_0])}\right)-\alpha'^{2s-t_0-1}\Lambda_g\left(s,\beta',\cos^{([\epsilon+t_0])}\right)\right).
\end{equation} 
Since the above does not depend on $T_M$ anymore, we can take $M$ arbitrarily large and see that \eqref{eq:gammadiff} is in $\mathcal{M}^\nu_{t_0}(-\infty, t_0-\sigma)$. The zeros of the quotient of gamma functions
\begin{equation}\label{eqn:gammaquotient}
\frac
{\gamma_f^{(-)^{\epsilon}}(1-s+t_0)}
{\gamma_f^{(-)^{\epsilon}}(1-s+[t_0])}.
\end{equation}
are contained in the set of poles of $\gamma_f^{(-1)^\epsilon}(1-s+[t_0])$ which is contained in $2\Z_{\geq 0}+1+[t_0]\pm\nu$. As $\nu\neq 0$, thepoles are all simple. Hence, dividing \eqref{eq:gammadiff} by \eqref{eqn:gammaquotient} we see that
\begin{equation}\label{eqn:difference of Lambda_g}
\alpha^{2s-t_0-1}\Lambda_g\left(s,\beta,\cos^{([\epsilon+t_0])}\right)-\alpha'^{2s-t_0-1}\Lambda_g\left(s,\beta',\cos^{([\epsilon+t_0])}\right),
\end{equation}
is in $\mathcal{M}^\nu(-\infty,t_0-\sigma)$. As $\alpha\neq\alpha'$ and we may take arbitrary $t_0\geq 0$, we conclude that $\Lambda_g\left(s,\beta,\cos^{(\epsilon+t_0)}\right)$ is in $\mathcal{M}^\nu(-\infty,t_0-\sigma)$. The function $\Lambda_g\left(s,\beta,\cos^{(\epsilon+t_0)}\right)$ only depends on the parity of $t_0$, so again we can choose $t_0$ arbitrarly large, but of a fixed parity, to conclude that $\Lambda_g\left(s,\beta,\cos^{(\epsilon+t_0)}\right)$ is in $\mathcal{M}^\nu(-\infty,\infty)$.
\end{proof}
\begin{corollary}\label{AnalyticPropertiesTwists2.proposition}
Make the assumptions of Proposition \ref{AnalyticPropertiesTwists.proposition}. If $\beta=b/q$ for $q\in \{1\}\cup\mathcal{P}$ and $(b,q)=1$, then, for any $\delta\in\{0,1\}$, $\Lambda_f\left(s,\beta,\cos^{(\delta)}\right)$ and $\Lambda_g\left(s,\beta,\cos^{(\delta)}\right)$ continue to elements of $\mathcal{M}^\nu\left(-\infty,\infty\right)$.
\end{corollary}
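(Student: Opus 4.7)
For $q=1$, $\beta\in\mathbb{Z}$ gives $\cos^{(0)}(2\pi n\beta)=1$ and $\cos^{(1)}(2\pi n\beta)=0$, so $\Lambda_f(s,\beta,\cos^{(0)})=\Lambda_f(s)\in\mathcal{M}^\nu(-\infty,\infty)$ by the pole structure in Theorem~\ref{thm:PolarConverse}, and $\Lambda_f(s,\beta,\cos^{(1)})=0$; the same applies to $\Lambda_g$. For $q\in\mathcal{P}$, the plan is to mirror the proof of Proposition~\ref{AnalyticPropertiesTwists.proposition} with $\beta=b/q$ in place of $\beta=b/(Nq)$, using the Proposition itself as a black box to control the new auxiliary additive twists that appear. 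Using the density hypothesis on $\mathcal{P}$, choose $p\in\mathcal{P}$ with $p\equiv-b\bmod q$; then $\Lambda_f(s,\beta,\cos^{(\delta)})=\Lambda_f(s,-p/q,\cos^{(\delta)})$, so WLOG $\beta=-p/q$. Pick a further prime $q'\in\mathcal{P}\setminus\{p,q\}$ and let $\beta'=-p/q'$. Because $\beta$ and $\beta'$ share the numerator $p$, the set $T_\beta\cap T_{\beta'}=\{p'/p:p'\in\mathcal{P},\,p'\equiv p\bmod qq'\}$ is infinite.

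Apply Lemma~\ref{lem:isolated term} at $\alpha=-1/(N\beta)=q/(Np)$ and $\alpha'=q'/(Np)$ with a common $T_M\subset T_\beta\cap T_{\beta'}$ and coefficients $c_\lambda$ from~\eqref{Vandermonde}, then subtract the two instances. The auxiliary $\Lambda_f$-terms entering the Lemma are additive twists at $\alpha\lambda^{-1}=q/(Np')$ and $\alpha'\lambda^{-1}=q'/(Np')$ for $\lambda=p'/p\in T_M$. Since the numerators $q,q'$ are primes in $\mathcal{P}$ coprime to $Np'$, Proposition~\ref{AnalyticPropertiesTwists.proposition} applies directly with $p'$ in the role of its prime denominator, placing $\Lambda_f(s-t_0,q/(Np'),\cos^{(\epsilon)})$ and $\Lambda_f(s-t_0,q'/(Np'),\cos^{(\epsilon)})$ in $\mathcal{M}^\nu(-\infty,\infty)$. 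Following the Proposition's argument, the subtracted expression
\[
\frac{\gamma_f^{(-)^\epsilon}(1-s+t_0)}{\gamma_f^{(-)^\epsilon}(1-s+[t_0])}\bigl(\alpha^{2s-t_0-1}\Lambda_g(s,\beta,\cos^{([\epsilon+t_0])})-\alpha'^{2s-t_0-1}\Lambda_g(s,\beta',\cos^{([\epsilon+t_0])})\bigr)
\]
lies in $\mathcal{M}^\nu_{t_0}(t_0+\sigma-M+2,\,t_0-\sigma)$. Dividing by the gamma quotient (whose zeros are simple since $\nu\neq 0$), letting $M\to\infty$, and varying $t_0$ over a fixed parity class so as to exploit $\alpha\neq\alpha'$ then isolates $\Lambda_g(s,\beta,\cos^{(\delta)})\in\mathcal{M}^\nu(-\infty,\infty)$ for each $\delta\in\{0,1\}$; the statement for $\Lambda_f$ follows by interchanging the roles of $f$ and $g$.

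The main obstacle is the step requiring the $\Lambda_f$-difference $\Lambda_f(s-t_0,q/(Np'),\cos^{(\epsilon)})-\Lambda_f(s-t_0,q'/(Np'),\cos^{(\epsilon)})$ to be holomorphic on a left half-plane, as is needed for the $\mathcal{M}^\nu_{t_0}$ conclusion. In Proposition~\ref{AnalyticPropertiesTwists.proposition} this was supplied by the prime-modulus functional equation~\ref{prop: SinCosFE}, which does not apply here since $Np'$ is not prime. The resolution is to leverage the Proposition's own conclusion to obtain meromorphy of each $\Lambda_f(s,q/(Np'),\cos^{(\epsilon)})$ with poles only at $\pm\nu+\mathbb{Z}$, and then verify that the residues there are compatible with the $c_\lambda$-weighted residue-correction terms $\mathcal{I}_k^\pm$ already built into Lemma~\ref{lem:isolated term}, so that the combined expression is holomorphic on the required left half-plane.
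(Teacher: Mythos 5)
Your reduction to $\beta=-p/q$ with prime numerator and your key observation — that the auxiliary additive twists $\Lambda_f\left(s-t_0,\alpha\lambda^{-1},\cos^{(\epsilon)}\right)$ arising from Lemma~\ref{lem:isolated term} have denominator $Np'$ and are therefore already in $\mathcal{M}^\nu(-\infty,\infty)$ by Proposition~\ref{AnalyticPropertiesTwists.proposition} — are exactly right. But the rest of the plan imports the differencing machinery of the Proposition and then stalls at the point you yourself flag: to divide by the quotient $\gamma_f^{(-)^\epsilon}(1-s+t_0)/\gamma_f^{(-)^\epsilon}(1-s+[t_0])$ you need the $\Lambda_g$-side to lie in $\mathcal{M}^\nu_{t_0}$, i.e.\ to be holomorphic at $2\Z+t_0+1\pm\nu$, and in the Proposition this came from the holomorphy of the $\Lambda_f$-difference on a left half-plane, supplied by Proposition~\ref{prop: SinCosFE} for prime denominators. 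For denominator $Np'$ no such functional equation is available, and mere membership of each twist in $\mathcal{M}^\nu(-\infty,\infty)$ allows simple poles at $\Z\pm\nu$, which after division by the gamma quotient could produce double poles and destroy membership in $\mathcal{M}^\nu$. Your proposed repair (``verify that the residues are compatible with the $\mathcal{I}^\pm_k$ correction terms'') is only an assertion; no mechanism is given that forces this compatibility, so as written the argument has a genuine gap at its decisive step.

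The gap is also unnecessary, because the differencing over $\beta,\beta'$ and the division by the gamma quotient can be avoided entirely; this is what the paper does. With $\beta=b/q$ (after the sign reduction) one has $\alpha\lambda^{-1}=-q/(Np)$ for $\lambda\in T_{\beta,M}$, so every $\Lambda_f$-term in \eqref{eq:merfn} is in $\mathcal{M}^\nu(-\infty,\infty)$ by Proposition~\ref{AnalyticPropertiesTwists.proposition}. Taking $t_0\in\{0,1\}$ makes $[t_0]=t_0$, so the gamma quotient in \eqref{eq:merfn} is identically $1$ and no division is needed; since \eqref{eq:merfn} lies in $\mathcal{M}^\nu(t_0+\sigma-M+2,\infty)$, one reads off directly that $\Lambda_g\left(s,\beta,\cos^{([\epsilon+t_0])}\right)\in\mathcal{M}^\nu(t_0+\sigma-M+2,\infty)$, and the two choices $t_0=0,1$ cover both parities $\delta\in\{0,1\}$. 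Letting $M\to\infty$ gives $\mathcal{M}^\nu(-\infty,\infty)$ (there is no need to vary $t_0$ through large values, since the leftward extension comes from $M$ alone), and interchanging $f$ and $g$ completes the proof. You should also note that your $q=1$ shortcut discards the case $\delta$ of opposite parity to $\epsilon$ too quickly: the paper's argument handles $q=1$ uniformly via $\alpha\lambda^{-1}=-1/(Np)$, which is cleaner than appealing to the conclusion of Theorem~\ref{thm:PolarConverse} (which is what is being proved).
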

\begin{proof}
Let $\alpha=-\frac{1}{N\beta}=-\frac{q}{Nb}$. As in the proof of Proposition \ref{AnalyticProperties.proposition} we can make the assumption that $\beta<0$ and hence $\alpha>0$ and $-b\in\mathcal{P}$. Consider $t_0\in\mathbb{Z}_{\geq0}$, $M>t_0$ and $T_{\beta,M}$ a subset of $T_\beta$ of cardinality $M$ satisfying \eqref{Vandermonde}. For any $\lambda\in T_{\beta,M}$ we have that $\alpha\lambda^{-1}=-\frac{q}{Np}$ has the form required in Proposition \ref{AnalyticPropertiesTwists.proposition}, so $\Lambda_f\left(s-t_0,\alpha\lambda^{-1},\cos^{(\epsilon)}\right)$ is in $\mathcal{M}^\nu(-\infty,\infty)$. Then for $t_0\in\{0,1\}$, equation \eqref{eq:merfn} implies that $\Lambda_g\left(s,\beta,\cos^{([\epsilon+t_0])}\right)$ is in $\mathcal{M}^\nu(t_0+\sigma-M+2,\infty)$. Taking $M$ arbitrarily large, we see that both $\Lambda_g\left(s,\beta,\cos^{(0)}\right)$ and $\Lambda_g\left(s,\beta,\cos^{(1)}\right)$ are in $\mathcal{M}^\nu(-\infty,\infty)$. Reversing the roles of $f$ and $g$, we draw the same conclusion for $\Lambda_f\left(s,\beta,\cos^{(\delta)}\right)$.
\end{proof}
We now assume that $\sigma=\frac12+\kappa$, where $0<\kappa<\frac12-|\Re\nu|$. Under this assumption, $\Lambda_f(s)$ and $\Lambda_g(s)$ are holomorphic for $\Re s>1+|\Re\nu|$ and so equation~\ref{eq:FE} implies they are also holomorphic for $\Re s<-|\Re\nu|$. Corollary~\ref{AnalyticPropertiesTwists2.proposition} then implies that $\Lambda_f(s)$ and $\Lambda_g(s)$ are holomorphic away from the set $\{\pm\nu,1\pm\nu\}$, where they have at most simple poles if $\nu\neq0$. 

To finish the proof of Theorem~\ref{thm:PolarConverse} it thus suffices to show firstly that if $\epsilon=1$ then $\Lambda_f(s)$ and $\Lambda_g(s)$ are entire, and secondly that if $\psi$ is a primitive Dirichlet character with conductor $q\in \mathcal{P}$ then $\Lambda_f(s,\psi)$ and $\Lambda_g(s,\psi)$ are entire. 

\begin{lemma}\label{Residues.lemma}
Assume that $\alpha\in\mathbb{Q}_{>0}$ and $\beta=-\frac{1}{N\alpha}$ are such that $\Lambda_g\left(s,\beta,\cos^{(\delta)}\right)$ and $\Lambda_f\left(s,\alpha, \cos^{(\delta)}\right)$ continue to elements of $\mathcal{M}^\nu(-\infty,\infty)$ for $\delta\in\{0,1\}$.

If $s_0\in\mathbb{Z}$ satisfies $s_0<1$, we choose an integer $t_0\in\mathbb{Z}_{>1}$ such that $[t_0]=[s_0]$ and write $j=\frac12(t_0-s_0)$. Moreover, we choose   a set $T_{\beta,M}$ of size $M\geq 2\ell_0 >t_0-s_0=2j$ satisfying \eqref{Vandermonde}. Fix a sign $\delta\in\{\pm\}$. 

If $\epsilon=0$, then
\begin{multline}\label{eq:resnot0}
\sum_{\lambda\in T_{\beta,M}}c_\lambda\lambda^{2s_0-2t_0-1+2\delta\nu}
\Res_{s=s_0+\delta\nu}\Lambda_f\left(s-t_0,\alpha\lambda^{-1},\cos\right)\\
=i^{-[t_0]}(N\alpha^2)^{s_0-\frac12+\delta\nu}\alpha^{-t_0}
\frac{(2\pi i)^{t_0}}{t_0!}\frac
{
\gamma_f^{+}(1-s_0+t_0-\delta\nu)
}
{
\gamma_f^{+}(1-s_0+[t_0]-\delta\nu)
}\Res_{s=s_0+\delta\nu}
\Lambda_g\left(s,\beta,\cos^{([t_0])}\right)\\
+(-1)^j\delta_{0}(s_0)\frac{\left(\frac12-\delta\nu\right)_j}{j!}\alpha^{s_0-t_0-1+2\delta\nu}\Res_{s=1-\delta\nu}\Lambda_f(s).
\end{multline}
If $\epsilon=1$, then
\begin{multline}\label{eq:nunot0epsilon1}
i\pi\sum_{\lambda\in T_{\beta,M}}c_\lambda\lambda^{2s_0+2\delta\nu-2t_0-1}
\Res_{s=s_0+\delta\nu}\Lambda_f\left(s-t_0,\alpha\lambda^{-1},\sin\right)\\
=i^{-[1+t_0]}(N\alpha^2)^{s_0-\frac12+\delta\nu}\alpha^{-t_0}
\frac{(2\pi i)^{t_0}}{t_0!}\frac
{
\gamma_f^{-}(1-s_0+t_0-\delta\nu)
}
{
\gamma_f^{-}(1-s_0+[t_0]-\delta\nu)
}\Res_{s=s_0+\delta\nu}\Lambda_g\left(s,\beta,\cos^{([1+t_0])}\right)\\
+(-1)^j
\delta_{0}(s_0)\frac{\left(\frac12-\delta\nu\right)_j\Gamma(\delta\nu)\sqrt{\pi}}{j!\Gamma\left(\delta\nu+\frac12\right)}\alpha^{s_0-t_0+2\delta\nu-1}\Res_{s=1-\delta\nu}\Lambda_f(s)\\
+(-1)^j \sum_{\lambda\in T_{\beta,2\ell_0}}c_\lambda \lambda^{s_0-t_0+2\delta\nu}\frac{\left(\frac12\right)_j}{\delta\nu(1-\delta\nu)_j}\alpha^{s_0-t_0-1}\Res_{s=1+\delta\nu}\Lambda_f(s).
\end{multline}
\end{lemma}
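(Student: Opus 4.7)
The plan is to exploit Lemma \ref{lem:isolated term}, which asserts that the expression in \eqref{eq:isolatedterm} belongs to $\mathcal{H}(t_0+\sigma-2\ell_0,\infty)$. Under the standing hypotheses $s_0<1<t_0$ and $M\geq 2\ell_0>t_0-s_0$, the point $s=s_0+\delta\nu$ lies inside this region of holomorphy, so the residue of that expression there vanishes identically. The proof consists of computing this residue piece by piece and rearranging the resulting identity to recover \eqref{eq:resnot0} and \eqref{eq:nunot0epsilon1}.

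The expression splits into three pieces. First, the $\Lambda_g$-piece carries the gamma quotient $\gamma_f^{(-)^\epsilon}(1-s+t_0)/\gamma_f^{(-)^\epsilon}(1-s+[t_0])$; a short check shows that, for $s_0<1$ and generic $\nu$, this quotient and the associated exponential prefactor are holomorphic at $s_0+\delta\nu$, so its residue reproduces the first term on the right-hand side of \eqref{eq:resnot0} and \eqref{eq:nunot0epsilon1} directly. Second, each $\Lambda_f(s-t_0,\alpha\lambda^{-1},\cos^{(\epsilon)})$ inherits a simple pole at $s_0+\delta\nu$ from the meromorphy property $\Lambda_f(\cdot,\alpha\lambda^{-1},\cos^{(\epsilon)})\in\mathcal{M}^\nu(-\infty,\infty)$, and its residue reproduces the left-hand side up to the factor $(-i\pi)^\epsilon$. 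Third, the $\mathcal{I}$-piece $\sum_\lambda c_\lambda\lambda^{2s-2t_0-1}(\lambda^{-1}\alpha)^{s-t_0-\frac12}\sum_{k,\pm}\frac{\mathcal{I}^\pm_k(\alpha\lambda^{-1})}{s-t_0\pm\nu+2k}$ acquires a pole at $s_0+\delta\nu$ only when $\pm=-\delta$ and $k=j:=\tfrac12(t_0-s_0)$, so its residue reduces to $\sum_\lambda c_\lambda\lambda^{2s_0+2\delta\nu-2t_0-1}(\lambda^{-1}\alpha)^{s_0+\delta\nu-t_0-\frac12}\mathcal{I}^{-\delta}_j(\alpha\lambda^{-1})$.

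To finish, I expand $\mathcal{I}^{-\delta}_j(\alpha\lambda^{-1})$ using the contour integral in \eqref{eqn:Ik}, collapsing it onto the four candidate simple poles $s_*\in\{\pm\nu,1\pm\nu\}$ of $\Lambda_f(s)$. For each $s_*$ I inspect the reciprocal gamma factors $\Gamma((s_*+\epsilon+\delta\nu)/2)^{-1}\Gamma((1-s_*+\epsilon+\delta\nu)/2)^{-1}$ to decide whether the contribution survives: when $\epsilon=0$ only $s_*=1-\delta\nu$ survives (all other candidates are killed by a $\Gamma(0)^{-1}$ or by a vanishing Pochhammer $(0)_j$), whereas when $\epsilon=1$ the contributions at $s_*=1\pm\delta\nu$ both survive and the identities $(a)_j/(1+a)_j=a/(j+a)$ and $\Gamma(-a)/\Gamma(1-a)=-1/a$ reduce the surviving Pochhammer ratios to the constants $(\tfrac12-\delta\nu)_j\Gamma(\delta\nu)\sqrt{\pi}/[j!\Gamma(\delta\nu+\tfrac12)]$ and $(\tfrac12)_j/[\delta\nu(1-\delta\nu)_j]$ appearing in \eqref{eq:nunot0epsilon1}. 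The outer $\lambda$-sums are then collapsed by the Vandermonde relation \eqref{Vandermonde}: the $s_*=1-\delta\nu$ contribution carries the integer exponent $\lambda^{s_0-t_0}$ and collapses to $\delta_0(s_0)$, while in the $\epsilon=1$ case the $s_*=1+\delta\nu$ contribution carries the non-integer exponent $\lambda^{s_0-t_0+2\delta\nu}$ and must be left as the explicit sum over $T_{\beta,M}$ that appears as the last term of \eqref{eq:nunot0epsilon1}. The principal obstacle is the careful gamma-function bookkeeping in this step: identifying which of the four candidate residue contributions vanish for each value of $\epsilon$, and then simplifying the surviving Pochhammer expressions so that they match the precise constants stated in the lemma.
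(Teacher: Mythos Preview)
Your approach is essentially identical to the paper's: take the residue of \eqref{eq:isolatedterm} at $s=s_0+\delta\nu$ (which lies in the region of holomorphy), split into the $\Lambda_g$-piece, the $\Lambda_f$-piece, and the $\mathcal{I}^{-\delta}_j$-piece, then evaluate $\mathcal{I}^{-\delta}_j(\alpha\lambda^{-1})$ by collapsing its contour integral onto the candidate poles of $\Lambda_f$ in $\{\pm\nu,1\pm\nu\}$ and finally invoke Vandermonde.

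There is one genuine omission in the $\epsilon=1$ analysis. You assert that only $s_*=1\pm\delta\nu$ survive, but in fact the contribution at $s_*=-\delta\nu$ (equivalently $p=1+\delta\nu$ in the paper's indexing) also survives the gamma/Pochhammer test: the relevant factor is
\[
\frac{\bigl(\tfrac12-\delta\nu\bigr)_j\,(-\delta\nu)_j}{\Gamma\!\bigl(\tfrac12\bigr)\,\Gamma(1+\delta\nu)},
\]
which is nonzero for generic $\nu$ and $j\ge1$. This produces a term proportional to $\Res_{s=-\delta\nu}\Lambda_f(s)$ carrying the $\lambda$-exponent $\lambda^{s_0-t_0-1}$. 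It is only \emph{after} summing over $T_{\beta,M}$ that this term drops out: since $s_0<1<t_0$ and $[s_0]=[t_0]$, the integer $t_0-s_0+1=2j+1$ lies in $\{1,\dots,2\ell_0-1\}$ and differs from $t_0$, so \eqref{Vandermonde} forces $\sum_\lambda c_\lambda\lambda^{s_0-t_0-1}=0$. You need to record this third surviving contribution and then kill it via Vandermonde, exactly as the paper does; otherwise the $\epsilon=1$ case is incomplete.
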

\begin{proof}
As $0\leq j< \ell_0$, we have
\begin{equation}
\Res_{s=s_0+\delta\nu}\left(\sum_{k=0}^{\ell_0-1}\sum_{\pm}\frac{\mathcal{I}^{\pm}_k(\alpha\lambda^{-1})}{s-t_0\pm\nu+2k}\right)
=\mathcal{I}^{-\delta}_j(\alpha\lambda^{-1}).
\end{equation}

Since $\Re(s_0+\delta\nu) > t_0+\sigma-2\ell_0$, the residue of \eqref{eq:isolatedterm} at $s=s_0+\delta\nu$ is zero. Hence
\begin{align}\label{eqn:residue at s}
\begin{split}
\Res_{s=s_0+\delta\nu}&
\left(\sum_{\lambda\in T_{\beta,2\ell_0}}c_\lambda\lambda^{2s-2t_0-1}
(-i\pi)^{\epsilon}\Lambda_f\left(s-t_0,\alpha\lambda^{-1},\cos^{(\epsilon)}\right)\right)\\
&=\Res_{s=s_0+\delta\nu}\left(i^{-[\epsilon+t_0]}(N\alpha^2)^{s-\frac12}\alpha^{-t_0}
\frac{(2\pi i)^{t_0}}{t_0!}
\Lambda_g\left(s,\beta,\cos^{([\epsilon+t_0])}\right)\frac
{
\gamma_f^{(-)^{\epsilon}}(1-s+t_0)
}
{
\gamma_f^{(-)^{\epsilon}}(1-s+[t_0])
}\right)\\
&\quad+\sum_{\lambda\in T_{\beta,M}}c_\lambda (\lambda\alpha)^{s_0-t_0+\delta\nu-\frac12}
\mathcal{I}^{-\delta}_j(\alpha\lambda^{-1}).
\end{split}
\end{align}

Taking $b=q=1$ in Proposition \ref{AnalyticPropertiesTwists.proposition}, we see that $\Lambda_f(s),\Lambda_g(s)\in\mathcal{M}^\nu(-\infty,\infty)$. Since the poles of $\Lambda_g(s)$ are in the critical strip $-\sigma<\Re s< \sigma+1$, we have:
\begin{multline}
\mathcal{I}^{-\delta}_j(\alpha\lambda^{-1})=(-1)^j\frac{\Gamma\left(\delta\nu\right)\sqrt{\pi}}{j! \left(1-\delta\nu\right)_j}\cdot\frac{1}{2\pi i}\oint\Lambda_f(s)\frac{\left(\frac{s+\epsilon-\delta\nu}{2}\right)_j \left(\frac{s-\epsilon-\delta\nu+1}{2}\right)_j}
{\Gamma\left(\frac{s+\epsilon+\delta\nu}{2}\right)\Gamma\left(\frac{1-s+\epsilon+\delta\nu}{2}\right)}(\alpha\lambda^{-1})^{\frac12-s}ds\\
=(-1)^j\frac{\Gamma\left(\delta\nu\right)\sqrt{\pi}}{j! \left(1-\delta\nu\right)_j}\sum_{\substack{p\in\mathbb{Z}\pm\nu\\ \Re p\in\left[-\sigma,\sigma+1\right]}}\Res_{s=1-p}\left(\Lambda_f(s)\frac{\left(\frac{s+\epsilon-\delta\nu}{2}\right)_j \left(\frac{s-\epsilon-\delta\nu+1}{2}\right)_j}
{\Gamma\left(\frac{s+\epsilon+\delta\nu}{2}\right)\Gamma\left(\frac{1-s+\epsilon+\delta\nu}{2}\right)}(\alpha\lambda^{-1})^{\frac12-s}\right)\\
=(-1)^j\frac{\Gamma\left(\delta\nu\right)\sqrt{\pi}}{j! \left(1-\delta\nu\right)_j}\sum_{\substack{p\in\mathbb{Z}\pm\nu\\ \Re p\in\left[-\sigma,\sigma+1\right]}}\frac{\left(\frac{1-p+\epsilon-\delta\nu}{2}\right)_j \left(\frac{2-p-\epsilon-\delta\nu}{2}\right)_j}
{\Gamma\left(\frac{1-p+\epsilon+\delta\nu}{2}\right)\Gamma\left(\frac{p+\epsilon+\delta\nu}{2}\right)}\alpha^{p-\frac12}\lambda^{\frac12-p}\Res_{s=1-p}\Lambda_f(s).
\end{multline}
The final line follows because $\frac{\left(\frac{s+\epsilon-\delta\nu}{2}\right)_j \left(\frac{s-\epsilon-\delta\nu}{2}\right)_j}
{\Gamma\left(\frac{s+\epsilon+\delta\nu}{2}\right)\Gamma\left(\frac{1-s+\epsilon+\delta\nu}{2}\right)}$ and $\left(\alpha\lambda^{-1}\right)^{\frac12-s}$ are entire and the poles of $\Lambda_f(s)$ are simple. Since we assume $0<\sigma+|\Re\nu|<1$, the only values of $p$ that can occur in the above sum are in $\{\pm\nu,1\pm\nu\}$. The factor $\frac{\left(\frac{1-p+\epsilon-\delta\nu}{2}\right)_j \left(\frac{2-p-\epsilon-\delta\nu}{2}\right)_j}{\Gamma\left(\frac{1-p+\epsilon+\delta\nu}{2}\right)\Gamma\left(\frac{p+\epsilon+\delta\nu}{2}\right)}$ vanishes at some of these values: if $\epsilon=0$, then it vanishes at $-\delta\nu,1+\delta\nu$ and $1-\delta\nu$. If $\epsilon=1$, it vanishes at $1-\delta\nu$. We conclude that
\begin{multline}\label{eq:SumRes}
\mathcal{I}^{-\delta}_j\left(\alpha\lambda^{-1}\right)=
(-1)^j\frac{\Gamma\left(\delta\nu\right)\sqrt{\pi}}{j! \left(1-\delta\nu\right)_j}\\
\cdot\begin{cases}
\frac{\left(\frac12-\delta\nu\right)_j\left(1-\delta\nu\right)_j}{\Gamma(\delta\nu)\sqrt{\pi}}\alpha^{\delta\nu-\frac12}\lambda^{\frac12-\delta\nu}\Res_{s=1-\delta\nu}\Lambda_f(s), & \epsilon=0,\\
\sum_{p\in\{\pm\delta\nu,1+\delta\nu\}}\frac{\left(\frac{2-p-\delta\nu}{2}\right)_j \left(\frac{1-p-\delta\nu}{2}\right)_j}
{\Gamma\left(\frac{2-p+\delta\nu}{2}\right)\Gamma\left(\frac{p+1+\delta\nu}{2}\right)}\alpha^{p-\frac12}\lambda^{\frac12-p}\Res_{s=1-p}\Lambda_f(s), & \epsilon=1.
\end{cases}
\end{multline}
Assume that $\epsilon=0$. Equation \eqref{eqn:residue at s} becomes
\begin{align}
\begin{split}
\Res_{s=s_0+\delta\nu}&
\left(\sum_{\lambda\in T_{\beta,2\ell_0}}c_\lambda\lambda^{2s-2t_0-1}
\Lambda_f\left(s-t_0,\alpha\lambda^{-1},\cos\right)\right)\\
&=\Res_{s=s_0+\delta\nu}\left(i^{-[t_0]}(N\alpha^2)^{s-\frac12}\alpha^{-t_0}
\frac{(2\pi i)^{t_0}}{t_0!}
\Lambda_g\left(s,\beta,\cos^{([t_0])}\right)\frac
{
\gamma_f^{+}(1-s+t_0)
}
{
\gamma_f^{+}(1-s+[t_0])
}\right)\\
&\quad+
(-1)^j\frac{\left(\frac12-\delta\nu\right)_j}{j!}\alpha^{s_0-t_0-2\delta\nu-1}\Res_{s=1-\delta\nu}\Lambda_f(s)\left(\sum_{\lambda\in T_{\beta,2\ell_0}}c_\lambda\lambda^{s_0-t_0}\right).
\end{split}
\end{align}
Equation \ref{eq:resnot0} now follows from equation \eqref{Vandermonde}. Indeed, by assumption, $s_0-t_0\in\mathbb{Z}$ satisfies $0>s_0-t_0>-2\ell_0$. We also note that the quotient $\gamma_f^+(1-s+t_0)/\gamma_f^+(1-s+[t_0])$ does not have a pole at $s_0+\delta\nu$, so it does not contribute to the residue.

Now assume that $\epsilon=1$. In this case, we have

\begin{align}\label{eq:Iepsilon1}
\begin{split}
\mathcal{I}^{-\delta}_j\left(\alpha\lambda^{-1}\right)=(-1)^j\left[\frac{\left(\frac12-\delta\nu\right)_j\Gamma(\delta\nu)\sqrt{\pi}}{j!\Gamma\left(\delta\nu+\frac12\right)}\alpha^{\delta\nu-\frac12}\lambda^{\frac12-\delta\nu}\Res_{s=1-\delta\nu}\Lambda_f(s)\right.\\
+\frac{\left(\frac12\right)_j}{\delta\nu(1-\delta\nu)_j}\alpha^{-\delta\nu-\frac12}\lambda^{\frac12+\delta\nu}\Res_{s=1+\delta\nu}\Lambda_f(s)\\
\left.-\frac{\left(\frac12-\delta\nu\right)_j}{j!(-\delta\nu+j)}\alpha^{\delta\nu+\frac12}\lambda^{-\delta\nu-\frac12}\Res_{s=-\delta\nu}\Lambda_f(s)\right].
\end{split}
\end{align}
It follows from equation \eqref{eqn:residue at s} that:
\begin{align}
\begin{split}
i\pi\Res_{s=s_0+\delta\nu}&
\left(\sum_{\lambda\in T_{\beta,2\ell_0}}c_\lambda\lambda^{2s-2t_0-1}
\Lambda_f\left(s-t_0,\alpha\lambda^{-1},\sin\right)\right)\\
&=\Res_{s=s_0+\delta\nu}\left(i^{-[1+t_0]}(N\alpha^2)^{s-\frac12}\alpha^{-t_0}
\frac{(2\pi i)^{t_0}}{t_0!}
\Lambda_g\left(s,\beta,\cos^{([1+t_0])}\right)\frac
{
\gamma_f^{-}(1-s+t_0)
}
{
\gamma_f^{-}(1-s+[t_0])
}\right)\\
&\quad+(-1)^j
\sum_{\lambda\in T_{\beta,2\ell_0}}c_\lambda \lambda^{s_0-t_0}\frac{\left(\frac{1-2\delta\nu}{2}\right)_j\Gamma(\delta\nu)\sqrt{\pi}}{j!\Gamma\left(\delta\nu+\frac12\right)}\alpha^{s_0-t_0+2\delta\nu-1}\Res_{s=1-\delta\nu}\Lambda_f(s)\\
&\quad+(-1)^j \sum_{\lambda\in T_{\beta,2\ell_0}}c_\lambda \lambda^{s_0-t_0+2\delta\nu}\frac{\left(\frac12\right)_j}{\delta\nu(1-\delta\nu)_j}\alpha^{s_0-t_0-1}\Res_{s=1+\delta\nu}\Lambda_f(s)\\
&\quad-(-1)^j \sum_{\lambda\in T_{\beta,2\ell_0}}c_\lambda\lambda^{s_0-t_0-1}\frac{\left(\frac{1-2\delta\nu}{2}\right)_j}{j!(-\delta\nu+j)}\alpha^{s_0-t_0+2\delta\nu}\Res_{s=-\delta\nu}\Lambda_f(s).\\
\end{split}
\end{align}
The stated equation \eqref{eq:nunot0epsilon1} now follows from \eqref{Vandermonde}. Note that the last term vanishes since $-t_0>s_0-t_0-1 > -2\ell_0$ .

\end{proof}

\begin{proposition}\label{AnalyticProperties.proposition}
Make the assumptions of Theorem~\ref{thm:PolarConverse}. If $\epsilon=1$, then $\Lambda_g(s)$ and $\Lambda_f(s)$ continue to entire functions on $\mathbb{C}$.
\end{proposition}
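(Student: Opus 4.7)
The plan is to show that the residues of $\Lambda_f(s)$ at its at-most-simple poles $\{\pm\nu,1\pm\nu\}$ all vanish; by the untwisted functional equation \eqref{eq:FE} with $\psi=\mathbf{1}$, the corresponding residues of $\Lambda_g(s)$ then follow automatically. Since the hypotheses of Theorem~\ref{thm:PolarConverse} are symmetric under the interchange $(f,\chi)\leftrightarrow(g,\overline{\chi})$, it will suffice to establish that $\Res_{s=1\pm\nu}\Lambda_f(s)=0$ and then invoke the symmetric argument to handle the remaining residues of $\Lambda_g$ (which, via the functional equation, control the residues of $\Lambda_f$ at $\pm\nu$). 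The main tool will be Lemma~\ref{Residues.lemma}, specifically equation~\eqref{eq:nunot0epsilon1}, applied with carefully chosen parameters.

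The concrete choice will be an integer $s_0\leq -2$, an even integer $t_0\geq 2$, and a sign $\delta\in\{+,-\}$. Then $[s_0]=[t_0]=0$ and $\delta_0(s_0)=0$, so the middle term on the right-hand side of \eqref{eq:nunot0epsilon1} disappears. For the left-hand side: by Lemma~\ref{lem:Lcos and Lsin via twists} the twist $\Lambda_f(\,\cdot\,,\alpha\lambda^{-1},\sin)$ is a finite combination of primitive odd character twists $\Lambda_f(\,\cdot\,,\psi)$, and the functional equation \eqref{eq:FE} relates its value at $s_0-t_0+\delta\nu$ to that of $\Lambda_g(1-s_0+t_0-\delta\nu,\overline{\psi})$. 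The argument $1-s_0+t_0-\delta\nu$ has real part at least $3-|\Re\nu|>\sigma+1$, so the Dirichlet series converges absolutely and the archimedean factors are non-polar; hence the twist is holomorphic there and the residue vanishes. The first term on the right-hand side vanishes by precisely the same functional-equation argument applied to $\Lambda_g(s,\beta,\cos^{([1+t_0])})=-\Lambda_g(s,\beta,\sin)$ (the sign reflecting even $t_0$): at $s=s_0+\delta\nu$ the dual point $1-s_0-\delta\nu$ again lies well inside the Dirichlet region for $\Lambda_f$.

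What remains of \eqref{eq:nunot0epsilon1} is the scalar identity
\[
0 \;=\; C(\alpha,\beta,t_0,\delta)\,\Res_{s=1+\delta\nu}\Lambda_f(s),
\]
in which $C$ is the explicit coefficient proportional to $\sum_{\lambda\in T_{\beta,M}}c_\lambda\lambda^{s_0-t_0+2\delta\nu}$. Once this sum is shown to be non-zero for some admissible $\beta$ and $T_{\beta,M}$, one obtains $\Res_{s=1+\delta\nu}\Lambda_f(s)=0$ for both signs $\delta$. The untwisted functional equation will then force $\Res_{s=\mp\nu}\Lambda_g(s)=0$; rerunning the entire argument with the roles of $f$ and $g$ exchanged yields $\Res_{s=1\pm\nu}\Lambda_g(s)=0$, and hence $\Res_{s=\mp\nu}\Lambda_f(s)=0$. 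All four residues of $\Lambda_f$ (and, by the functional equation, those of $\Lambda_g$) vanish, so both completed $L$-functions are entire.

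The hard part will be verifying that $C$ does not vanish identically across the admissible parameter choices — equivalently, that the Vandermonde-type sum $\sum_{\lambda}c_\lambda\lambda^{s_0-t_0+2\delta\nu}$ can be arranged to be non-zero. Because the $c_\lambda$ are uniquely pinned down by the integer-exponent relations \eqref{Vandermonde} while the target exponent $s_0-t_0+2\delta\nu$ is non-integer (as $\nu\neq 0$ is generic with respect to the integer lattice), this is a generic non-vanishing property; making it explicit will likely require a concrete analysis of the nodes in $T_\beta$ or a limiting argument over the infinite family of valid parameter sets.
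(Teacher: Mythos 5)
Your overall strategy (kill the residues of $\Lambda_f$ at $1\pm\nu$ via Lemma~\ref{Residues.lemma}, then transfer to $\Lambda_g$ and to $\pm\nu$ by the untwisted functional equation and by swapping $f$ and $g$) is the same as the paper's, but your choice $s_0\leq-2$ creates a genuine gap. You claim that the first term on the right of \eqref{eq:nunot0epsilon1}, namely the one containing $\Res_{s=s_0+\delta\nu}\Lambda_g\left(s,\beta,\cos^{([1+t_0])}\right)$ with $\beta=-\frac{1}{N\alpha}$ of denominator $Nq$, vanishes ``by the same functional-equation argument''. There is no such functional equation: Proposition~\ref{prop: SinCosFE} and hypothesis \eqref{eq:FE} only cover additive twists $a/q$ with prime denominator $q\in\mathcal{P}$ (equivalently, primitive characters of modulus $q\in\{1\}\cup\mathcal{P}$), whereas an additive twist with denominator $Nq$ decomposes into character twists of modulus dividing $Nq$, which are not among the assumed data. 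All that is known at this stage (Proposition~\ref{AnalyticPropertiesTwists.proposition}) is that $\Lambda_g\left(s,\frac{b}{Nq},\cos^{(\delta)}\right)\in\mathcal{M}^\nu(-\infty,\infty)$, i.e.\ it may well have simple poles exactly at the points $s_0+\delta\nu$ you need to be regular. In fact, in the paper the vanishing of these residues (for the parity combination you need, $s_0$ and $t_0$ both even) is proved only in the \emph{next} proposition, and its proof invokes the present Proposition~\ref{AnalyticProperties.proposition}; so your route is effectively circular unless you supply an independent proof of that vanishing, which your functional-equation justification does not give.

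The paper sidesteps this by taking $s_0=0$: then the unknown quantities $\Res_{s=\delta\nu}\Lambda_g(s,\beta,\sin)$ and $\Res_{s=1-\delta\nu}\Lambda_f(s)$ combine into the left side of \eqref{eq:reside}, which is independent of $t_0$ and of the node set $T_{\beta,M}$, while the coefficient of $\Res_{s=1+\delta\nu}\Lambda_f(s)$ contains the sum $\sum_\lambda c_\lambda\lambda^{-t_0+2\delta\nu}$; one then shows this sum can be forced to take two distinct values (by adjoining a node $\lambda_0\in T_\beta$ to $T_{\beta,M}$ and analysing the extended Vandermonde determinant, using growth and argument comparisons and $\nu\neq0$), which kills $\Res_{s=1+\delta\nu}\Lambda_f(s)$ without ever needing the $g$-side additive-twist residue to vanish. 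Note also that this non-vanishing/flexibility step, which you defer as ``the hard part'', is genuinely needed and is not a routine genericity statement: the exponent $-t_0+2\delta\nu$ interacts with the constraints \eqref{Vandermonde}, and the paper's augmentation argument is the substance of the proof. As written, your proposal has one unjustified (and, at this point of the logical development, unprovable by your method) vanishing claim and one acknowledged but essential omission.
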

\begin{proof}
Since we already established that the only poles $\Lambda_f$ and $\Lambda_g$ can have are simple and in the set $\{\pm\nu,1\pm\nu\}$ it suffices to show that the residues of these functions vanish there.

Let $\beta=-\frac{1}{Nq}$, $T_{\beta, M}$ a set of cardinality $M$ satisfying \eqref{Vandermonde}, and $\lambda=p\in T_{\beta,2\ell_0}$, so that $\alpha=q$ and $\alpha\lambda^{-1}=\frac{q}{p}$. By absolute convergence of the Dirichlet series we know that $\Lambda\left(s-t_0,\alpha\lambda^{-1},\sin\right)$ is holomorphic for $\Re(s-t_0)\geq \sigma+1$, and so by the functional equation it is also holomorphic for $\Re(s-t_0)\leq-\sigma$. When $s_0=0$, for all even $t_0\geq\sigma+1$ equation \eqref{eq:nunot0epsilon1} simplifies to:
\begin{multline}\label{eq:reside}
iN^{\delta\nu-\frac12}\Res_{s=\delta\nu}\Lambda_g\left(s,\beta,\sin\right)+\frac{\Gamma(\delta\nu)\sqrt{\pi}}{\Gamma\left(\delta\nu+\frac12\right)}\Res_{s=1-\delta\nu}\Lambda_f(s)\\
=\frac{t_0!}{(1-2\delta\nu)_{t_0}}\frac{\alpha^{-2\delta\nu}}{\delta\nu}\left(\sum_{\lambda\in T_{\beta,M}}c_{\lambda}\lambda^{-t_0+2\delta\nu}\right)\Res_{s=1+\delta\nu}\Lambda_f(s).
\end{multline}
We used the formulas $\gamma_f^+(1+t_0-\delta\nu)/\gamma_f^+(1+[t_0]-\delta\nu) = \pi^{-t_0}(\frac12-\delta\nu)_{t_0/2}(\frac12)_{t_0/2}$ and $2^{t_0}(\frac12)_{t_0/2}(1)_{t_0/2}=t_0!$ above.
The left-hand side of equation \eqref{eq:reside} does not depend on $t_0$. Recall that $T_{\beta,M}$ and the $c_\lambda$ were chosen such that \eqref{Vandermonde} is satisfied. We want to show that we can add an element $\lambda_0\in T_\beta$ to $T_{\beta,M}$, so that \eqref{Vandermonde} is still satisfied and in addition $\sum_{\lambda\in T_{\beta,2\ell_0}} c_\lambda \lambda^{-t_0+2\delta\nu}$ assumes an arbitrary value.

Equivalently, we want to find $\lambda_0\in T_\beta$ such that the vectors $(\lambda^{-t})_{\lambda\in T_{\beta,M}\cup\{\lambda_0\}}$ for $t\in\{0,1,\ldots,M-1\}$ are linearly independent of the vector $(\lambda^{-t_0+2\delta\nu})_{\lambda\in T_{\beta,M}\cup\{\lambda_0\}}$. Consider the matrix that has these $M+1$ vectors in $\R^{M+1}$ as Columns. Developing the determinant with respect to the last row we obtain an expression in $\lambda_0$ of the form
\begin{equation}\label{eq:extended Vandermonde determinant}
\lambda_0^{-t_0+2\delta\nu}c + P(\lambda_0),
\end{equation}
where $c$ is a non-zero constant (the Vandermonde determinant of $T_{\beta,M}$) and $P$ is a polynomial with complex coefficients of degree $M-1$. Since $T_\beta$ is an infinite set we can choose $\lambda_0$ arbitrarly large. Suppose the expression \eqref{eq:extended Vandermonde determinant} vanishes for all $\lambda_0\in T_\beta$. By comparing the growth of the two terms in \eqref{eq:extended Vandermonde determinant} for $\lambda_0\to\infty$, we conclude that $P = d\lambda_0^{-t_0}$ and $\nu$ is purely imaginary. Now comparing the argument of the two terms we arrive at a contradiction. Hence there exists a $\lambda_0\in T_\beta$ such that \eqref{eq:extended Vandermonde determinant} is non-zero. So we can apply Lemma \ref{Residues.lemma} with $T_{\beta,M}\cup \{\lambda_0\}$ instead of $T_{\beta,M}$ and choose coefficients $c_\lambda$ for $\lambda\in T_{\beta,M}\cup\{\lambda_0\}$ such that \eqref{Vandermonde} and \eqref{eq:reside} is satisfied and $\sum_{\lambda\in T_{\beta,M}\cup\{\lambda_0\}} c_\lambda \lambda^{-t_0+2\delta\nu}=1$. We can also choose coefficients $c'_\lambda$ with $\sum_{\lambda\in T_{\beta,M}\cup\{\lambda_0\}} c_\lambda \lambda^{-t_0+2\delta\nu}=2$ such that \eqref{eq:reside} is satisfied with $c_\lambda$ replaced by $c'_\lambda$. We conclude $\Res_{s=1+\delta\nu}\Lambda_f(s)=0$.

As $\delta\in\{\pm\}$ was arbitrary, we have shown that $\Res_{s=1+\nu}\Lambda_f(s)=\Res_{s=1-\nu}\Lambda_f(s)=0$. Reversing the roles of $f$ and $g,$ we deduce the same for $\Lambda_g(s)$. By the functional equation, we have $\Res_{s=\pm\nu}\Lambda_f(s)=\Res_{s=\pm\nu}\Lambda_g(s)=0$.

\end{proof}
Note that equation \eqref{eq:reside} also implies that 
\begin{equation}
\Res_{s=\delta\nu}\Lambda_g\left(s,-\frac{1}{Nq},\sin\right)=-iN^{\frac{1}{2}-\delta\nu}\frac{\delta\nu\sqrt{\pi}}{\Gamma\left(\delta\nu+\frac12\right)}\Res_{s=1-\delta\nu}\Lambda_f(s),
\end{equation}
and so the residue on the left-hand side is independent of $q$.
\begin{proposition}
Make the assumptions of Theorem~\ref{thm:PolarConverse}. If $\psi$ is a primitive Dirichlet character with conductor $q\in\mathcal{P}$, then $\Lambda_f(s,\psi)$ and $\Lambda_g(s,\psi)$ continue to elements of $\mathcal{H}(-\infty,\infty)$.
\end{proposition}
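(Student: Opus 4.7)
The plan is to extend Proposition~\ref{AnalyticProperties.proposition} from the untwisted setting to multiplicative twists by primitive characters of conductor $q\in\mathcal{P}$, using character orthogonality to bridge additive and multiplicative twists. Inverting Lemma~\ref{lem:Lcos and Lsin via twists} by orthogonality of characters mod $q$ gives, for $\psi$ primitive with $\psi(-1)=(-1)^k$,
\[
\Lambda_f(s,\psi)=\frac{1}{i^k\tau(\overline{\psi})}\sum_{a\bmod q}\overline{\psi(a)}\,\Lambda_f(s,a/q,\cos^{(k)}),
\]
and analogously for $\Lambda_g(s,\psi)$. By Corollary~\ref{AnalyticPropertiesTwists2.proposition}, each additive twist on the right lies in $\mathcal{M}^\nu(-\infty,\infty)$, so each multiplicative twist extends meromorphically to $\mathbb{C}$ with at most simple poles in $\mathbb{Z}\pm\nu$. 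Absolute convergence of $L_f(s,\psi)=\sum\psi(n)a_n n^{-s}$ for $\Re s>\tfrac{3}{2}+\kappa$, together with the functional equation~\eqref{eq:FE} and the hypothesis $\kappa<\tfrac{1}{2}-|\Re\nu|$, restricts any pole to $\{\pm\nu,1\pm\nu\}$.

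Next I would apply Lemma~\ref{Residues.lemma} with $\beta=b/q$, $\alpha=-q/(Nb)$ (signs arranged so $\alpha>0$ as in Proposition~\ref{AnalyticPropertiesTwists.proposition}), $s_0=0$, and $t_0\ge2$ even. For $\lambda=p/b\in T_\beta$ one has $\alpha\lambda^{-1}=-q/(Np)$, to which Proposition~\ref{AnalyticPropertiesTwists.proposition} applies, so $\Lambda_f(s,\alpha\lambda^{-1},\cos^{(\epsilon)})\in\mathcal{M}^\nu(-\infty,\infty)$ with poles in $\{\pm\nu,1\pm\nu\}$; in particular $\mathrm{Res}_{s=\delta\nu+t_0}\Lambda_f(s,\alpha\lambda^{-1},\cos^{(\epsilon)})=0$ because $\delta\nu+t_0\notin\{\pm\nu,1\pm\nu\}$ for $t_0\ge2$. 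The left-hand side of \eqref{eq:resnot0} (resp.\ \eqref{eq:nunot0epsilon1}) thus vanishes, and after cancelling the common gamma ratio the lemma reduces to
\[
\mathrm{Res}_{s=\delta\nu}\Lambda_g\bigl(s,b/q,\cos^{([\epsilon+t_0])}\bigr)=C_{\nu,N,\delta,\epsilon}\,\mathrm{Res}_{s=1-\delta\nu}\Lambda_f(s),
\]
a quantity independent of $b$ (and equal to zero when $\epsilon=1$ by Proposition~\ref{AnalyticProperties.proposition}). Summing against $\overline{\psi(b)}$ over $b$ coprime to $q$ and using $\sum_b\overline{\psi(b)}=0$ kills the right-hand side, yielding $\mathrm{Res}_{s=\delta\nu}\Lambda_g(s,\psi)=0$ for every primitive non-principal $\psi$ of the parity matching $[\epsilon+t_0]$. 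Swapping $f$ and $g$ handles $\Lambda_f(s,\psi)$, and \eqref{eq:FE} propagates these vanishings to the residues at $1\pm\nu$.

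The main obstacle is the parity-mismatched case, namely $\epsilon=0$ with $\psi$ odd or $\epsilon=1$ with $\psi$ even: here Lemma~\ref{Residues.lemma} is forced to take $s_0=-1$ with $[t_0]=1$, and the residue at $-1+\delta\nu\notin\{\pm\nu,1\pm\nu\}$ vanishes for combinatorial reasons, producing only the trivial identity $0=0$. To close this case I would combine Proposition~\ref{prop: SinCosFE}, which expresses the opposite-parity additive-twist residue $\mathrm{Res}_{s=\delta\nu}\Lambda_g(s,b/q,\cos^{(1)})$ as a linear combination of $\mathrm{Res}_{s=1-\delta\nu}\Lambda_f(s,\overline{\psi})$ over odd $\psi\bmod q$, with a second application of Lemma~\ref{Residues.lemma} at $s_0=0$, $t_0=2$ in the $f\leftrightarrow g$-swapped form, using an auxiliary prime from $\mathcal{P}$ via the Vandermonde construction in the proof of Proposition~\ref{AnalyticProperties.proposition}. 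The difficulty is that a naive combination of Proposition~\ref{prop: SinCosFE} with \eqref{eq:FE} collapses to the self-dual tautology $R=R$; only by feeding in genuinely new input from Lemma~\ref{Residues.lemma} at auxiliary parameters can one break the degeneracy and force the remaining mismatched residues to vanish, thereby completing the proof.
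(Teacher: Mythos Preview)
Your opening reduction is fine: expressing $\Lambda_g(s,\psi)$ as a combination of additive twists $\Lambda_g(s,b/q,\cos^{(\delta)})$, invoking Corollary~\ref{AnalyticPropertiesTwists2.proposition} and absolute convergence plus~\eqref{eq:FE} to confine any poles to $\{\pm\nu,1\pm\nu\}$, and then aiming to kill $\Res_{s=\delta\nu}$, is exactly how the paper begins.

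The genuine gap is in your second paragraph. With $\beta=b/q$ you have $\alpha\lambda^{-1}=q/(Np)$, and you assert that $\Lambda_f(s,q/(Np),\cos^{(\epsilon)})$ has poles only in $\{\pm\nu,1\pm\nu\}$. Proposition~\ref{AnalyticPropertiesTwists.proposition} does \emph{not} say this: it only places the additive twist in $\mathcal{M}^\nu(-\infty,\infty)$, which permits simple poles throughout $\mathbb{Z}\pm\nu$. Absolute convergence cuts off poles with $\Re s>\sigma+1$, but nothing you have written excludes a pole at $\delta\nu-t_0$ for even $t_0\ge 2$, which is exactly the point you need. You cannot decompose this additive twist into primitive multiplicative twists via Lemma~\ref{lem:Lcos and Lsin via twists} either, because the denominator $Np$ is composite. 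So the left-hand side of Lemma~\ref{Residues.lemma} is not known to vanish, and your displayed identity for $\Res_{s=\delta\nu}\Lambda_g(s,b/q,\cos^{([\epsilon+t_0])})$ is unjustified.

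The paper repairs this by a two-stage bootstrap that you have skipped. First one takes $\beta=b/(Nq)$, so that $\alpha\lambda^{-1}=q/p$ is a ratio of two primes in $\mathcal{P}$; now Lemma~\ref{lem:Lcos and Lsin via twists} applies with prime modulus $p$, and (for $\epsilon=0$ after taking a difference with a second $\beta'=b/(Nq')$) the relevant residues on the $\Lambda_f$ side are linear combinations of primitive multiplicative twists, hence vanish far to the left by the functional equation. This yields $\Res_{s=s_0+\delta\nu}\Lambda_g(s,b/(Nq),\cos^{(\delta)})=0$ for all $s_0<1$ (or $s_0<0$), and by $f\leftrightarrow g$ the same for $\Lambda_f$. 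Only then does one pass to $\beta=b/q$, where $\alpha\lambda^{-1}=q/(Np)$ now has the form covered by the first stage, so its residues at $\delta\nu-t_0$ genuinely vanish.

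For the parity question, the paper does not use Lemma~\ref{Residues.lemma} alone: it also takes residues directly in~\eqref{eq:isolatedterm} with $t_0-s_0$ \emph{odd}, producing an equation whose $\Lambda_g$-side carries $\cos^{([\epsilon+t_0])}=\cos^{(1-\epsilon)}$. This is what supplies the ``mismatched'' parity; your proposed route through Proposition~\ref{prop: SinCosFE} and a second application of Lemma~\ref{Residues.lemma}, as you yourself note, degenerates to a tautology.
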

\begin{proof}
For $\psi$ as in the statement, recall that
\begin{equation}\label{eq:CharacterCosExpansion}
\psi(n)=(-i)^{\sgn\psi}\frac{\tau(\psi)}{q}\sum_{b\text{ mod }q}\bar{\psi}(-b)\cos^{(\sgn\psi)}\left(2\pi\frac{bn}{q}\right),
\end{equation}
and so
\begin{equation}\label{eq:CharacterSumAdditive}
\Lambda_g(s,\psi)
=(-i)^{\sgn\psi}\frac{\tau(\psi)}{q}
\sum_{b\text{ mod }q}
\bar{\psi}(-b)\Lambda_g\left(s,\frac{b}{q},\cos^{(\sgn\psi)}\right).
\end{equation}
By our assumption on $\sigma$, we know that $\Lambda_f(s,\psi)$ are entire for $\Re s\geq \frac32>1+\sigma$. By equation~\eqref{eq:FE} it suffices to prove that 
\begin{equation}\label{eq:want}
\Res_{s=\delta\nu}\Lambda_f\left(s,\psi\right)=\Res_{s=\delta\nu}\Lambda_g\left(s,\psi\right)=0, 
\end{equation}
for $\delta\in\{\pm\}$. 

Let $\alpha>0$ and $T_{\beta,M}$ be as in Lemma \ref{lem:isolated term}. Let $s_0<1$ and choose $t_0>1$ such that $t_0-s_0$ is odd. Taking the residue at $s=s_0+\delta\nu$ of equation \eqref{eq:isolatedterm} we obtain
\begin{multline}\label{eq:1}
i^{-[\epsilon+t_0]}(N\alpha^2)^{s_0+\delta\nu-\frac12}\alpha^{-t_0}
\frac{(2\pi i)^{t_0}}{t_0!}
\frac
{
\gamma_f^{(-)^\epsilon}(1-s_0+t_0-\delta\nu)
}
{
\gamma_f^{(-)^\epsilon}(1-s_0+[t_0]-\delta\nu)
}\Res_{s=s_0+\delta\nu}\Lambda_g\left(s,\beta,\cos^{[\epsilon+t_0]}\right)\\
=(-i\pi)^\epsilon\sum_{\lambda\in T_{\beta,2\ell_0}}c_\lambda\lambda^{2s_0-2t_0+2\delta\nu-1}
\Res_{s=s_0+\delta\nu}\Lambda_f\left(s-t_0,\alpha\lambda^{-1},\cos^{(\epsilon)}\right).
\end{multline}
First assume $\epsilon=1$. If $\beta=\frac{b}{Nq}$ for $b<0$ coprime to $Nq$, then $\alpha\lambda^{-1}=\frac{q}{p}$ for some $p\equiv b$ mod $Nq$. By the functional equation for $\Lambda_f\left(s-t_0,\alpha\lambda^{-1},\sin\right)$, the second line of \eqref{eq:1} is zero and so $\Res_{s=s_0+\delta\nu}\Lambda_g\left(s,\frac{b}{Nq},\cos^{(s_0)}\right)=0$ for all $s_0<1$. If $\beta=\frac{b}{Nq}$ with $b>0$ coprime to $Nq$, then we choose $b'\in\mathcal{P}$ with $b'\equiv -b\bmod Nq$, so that $\Lambda_g\left(s,\frac{b}{Nq},\cos^{(s_0)}\right) = \Lambda_g\left(s,\frac{-b'}{Nq},\cos^{(s_0)}\right)$ and so $\Res_{s=s_0+\delta\nu}\Lambda_g\left(s,\beta,\cos^{(s_0)}\right)=0$ for all $\beta$ of the form $b/Nq$.

To obtain information on the other additive twists we use Lemma \ref{Residues.lemma}. Let $s_0<1$, $t_0>1$ such that $[s_0]=[t_0]$. Equation \eqref{eq:nunot0epsilon1} and Proposition \ref{AnalyticProperties.proposition} imply:
\begin{multline}\label{eq:2}
i^{-[1+t_0]}(N\alpha^2)^{s_0-\frac12+\delta\nu}\alpha^{-t_0}
\frac{(2\pi i)^{t_0}}{t_0!}\frac
{
\gamma_f^{-}(1-s_0+t_0-\delta\nu)
}
{
\gamma_f^{-}(1-s_0+[t_0]-\delta\nu)
}\Res_{s=s_0+\delta\nu}\Lambda_g\left(s,\beta,\cos^{([1+t_0])}\right)\\
=i\pi\sum_{\lambda\in T_{\beta,M}}c_\lambda\lambda^{2s_0+2\delta\nu-2t_0-1}
\Res_{s=s_0+\delta\nu}\Lambda_f\left(s-t_0,\alpha\lambda^{-1},\sin\right)\\
\end{multline}
As above, for $\beta = \frac{b}{Nq}$ the second line vanishes and we conclude $\Res_{s=s_0+\delta\nu}\Lambda_g\left(s,\frac{b}{Nq},\cos^{(1+s_0)}\right)=0$ for all $s_0<1$.

If $\beta=\frac{b}{q}$ for $b<0$ coprime to $q$, then $\lambda^{-1}\alpha=\frac{q}{Np}$ for a prime $p$ congruent to $-b$ modulo $q$. By the previous paragraph the second lines of equations \eqref{eq:1} and \eqref{eq:2} both vanish. Therefore, equation \eqref{eq:1} for $s_0=0$ and $t_0=3$ implies that $\Res_{s=\delta\nu}\Lambda_g\left(s,\frac{b}{q},\cos\right)=0$ and equation \eqref{eq:2} for $s_0=0$ and $t_0=2$ implies that $\Res_{s=\delta\nu}\Lambda_g\left(s,\frac{b}{q},\sin\right)=0$. Reversing the roles of $f$ and $g$, we deduce the same for $\Res_{s=\delta\nu}\Lambda_f\left(s,\frac{b}{q},\cos\right)$ and $\Res_{s=\delta\nu}\Lambda_f\left(s,\frac{b}{q},\sin\right)$. By equation \eqref{eq:CharacterSumAdditive}, we deduce equation \eqref{eq:want} as required.

Now consider $\epsilon=0$. Let $t_0>1$ and $s_0<1$ be integers. Let $t_0-s_0$ be odd, and $\beta,\beta'\in\Q_{<0}$ with the same numerator. In \eqref{eq:1} we choose the set $T=T_{\beta,2\ell_0}=T_{\beta',2\ell_0}$ to be a subset of $T_\beta\cap T_{\beta'}$. This is possible since $T_\beta\cap T_{\beta'}$ is infinite. Hence \eqref{eq:1} applies to the pair $\beta$ and $\alpha=-1/N\beta$ and the pair $\beta'$ and $\alpha'=-1/N\beta$. Subtracting the resulting equations from each other we obtain
\begin{multline}\label{eq:eps0 t0-s0 odd}
i^{[t_0]}N^{s_0+\delta\nu-\frac12}
\frac{(2\pi i)^{t_0}}{t_0!}
\frac
{
\gamma_f^{+}(1-s_0+t_0-\delta\nu)
}
{
\gamma_f^{+}(1-s_0+[t_0]-\delta\nu)
}\\
\cdot\left[\alpha^{2s_0+2\delta\nu-t_0-1}\Res_{s=s_0+\delta\nu}\Lambda_g\left(s,\beta,\cos^{[t_0]}\right)-\alpha'^{2s_0+2\delta\nu-t_0-1}\Res_{s=s_0+\delta\nu}\Lambda_g\left(s,\beta',\cos^{[t_0]}\right)\right]\\
=\sum_{\lambda\in T}c_\lambda\lambda^{2s_0+2\delta\nu-2t_0-1}
\Res_{s=s_0+\delta\nu}\left[\Lambda_f\left(s-t_0,\alpha\lambda^{-1},\cos\right)-\Lambda_f\left(s-t_0,\alpha'\lambda^{-1},\cos\right)\right].
\end{multline}
When $\beta = \frac{b}{Nq}$ and $\beta' = \frac{b}{Nq'}$ with $b<0$ coprime to $Nq$ the last line vanishes, since $\Lambda_f\left(s-t_0,\alpha\lambda^{-1},\cos\right)-\Lambda_f\left(s-t_0,\alpha'\lambda^{-1},\cos\right)$ is a linear combination of twists of $\Lambda_f$ by characters and hence holomorphic at $s_0-t_0+\delta\nu$, since $s_0-t_0<-2$. Therefore, we have 
\[
\alpha^{s_0+2\delta\nu-t_0-1}\Res_{s=s_0+\delta\nu}\Lambda_g\left(s,\frac{b}{Nq},\cos^{[t_0]}\right)=\alpha'^{s_0+2\delta\nu-t_0-1}\Res_{s=s_0+\delta\nu}\Lambda_g\left(s,\frac{b}{Nq'},\cos^{[t_0]}\right).
\]
Varying $t_0$ we deduce that $\Res_{s=s_0+\delta\nu}\Lambda_g\left(s,\beta,\cos^{[t_0]}\right)=\Res_{s=s_0+\delta\nu}\Lambda_g\left(s,\beta,\cos^{(s_0+1)}\right)=0$ for all $s_0<1$ and all $\beta$ of the form $\frac{b}{Nq}$ with $b$ coprime to $Nq$. We can omit the condition $b<0$ by the same argument as in the case $\epsilon=1$.

On the other hand, if $t_0-s_0$ is even we consider the equations \eqref{eq:resnot0} for $\beta$ and $\beta'$. Subtracting the two equations from each other we note that when $\beta = \frac{b}{Nq}$ and $\beta' = \frac{b}{Nq'}$, again the terms $\Res_{s_0+\delta\nu}\left(\Lambda_f\left(s-t_0,\alpha\lambda^{-1},\cos\right)-\Lambda_f\left(s-t_0,\alpha'\lambda^{-1},\cos\right)\right)$ vanish and so we are left with
\begin{align}\label{eq:entirety 1}
\begin{split}
&i^{-[t_0]}N^{s_0-\frac12+\delta\nu}
\frac{(2\pi i)^{t_0}}{t_0!}\frac
{
\gamma_f^{+}(1-s_0+t_0-\delta\nu)
}
{
\gamma_f^{+}(1-s_0+[t_0]-\delta\nu)
}
\\
&\quad\cdot\left(
\alpha^{2s_0+2\delta\nu-1-t_0}\Res_{s=s_0+\delta\nu}\Lambda_g\left(s,\beta,\cos^{([t_0])}\right)-\alpha'^{2s_0+2\delta\nu-1-t_0}\Res_{s=s_0+\delta\nu}\Lambda_g\left(s,\beta',\cos^{([t_0])}\right)\right)\\
&\quad\quad=(-1)^{j+1}\delta_{0}(s_0)\frac{\left(\frac12-\delta\nu\right)_j}{j!}(\alpha^{s_0-t_0-1+2\delta\nu}-\alpha'^{s_0-t_0-1+2\delta\nu})\Res_{s=1-\delta\nu}\Lambda_f(s)
\end{split}
\end{align}
where $j=\frac{1}{2}(t_0-s_0)$. We see that  
\[q^{2s_0+2\delta\nu-1-t_0}\Res_{s=s_0+\delta\nu}\Lambda_g(s,\frac{b}{Nq},\cos^{(s_0)}) = q'^{2s_0+2\delta\nu-1-t_0}\Res_{s=s_0+\delta\nu}\Lambda_g(s,\frac{b}{Nq'},\cos^{(s_0)})
\]
for $s_0<0$, since in that case the last line of \eqref{eq:entirety 1} vanishes. Varying $t_0$ we see that $\Res_{s=s_0+\delta\nu}\Lambda_g(s,\frac{b}{Nq},\cos^{(s_0)})=0$ for $s_0<0$.

Now let $\beta = \frac{b}{q}$ with $b<0$, which implies $\lambda^{-1}\alpha=\frac{q}{Np}$ for $\lambda\in T_\beta$. We first insert $s_0=0$ and $t_0=3$ into \eqref{eq:1}. Since we have shown above that $\Res_{s=\delta\nu}\Lambda_f(s-3,\lambda^{-1}\alpha,\cos)=\Res_{s=-3+\delta\nu}\Lambda_f(s,\lambda^{-1}\alpha,\cos)=0$ we see $\Res_{s=\delta\nu}\Lambda_g(s,\frac{b}{q},\sin)=0$. By equation \eqref{eq:CharacterSumAdditive} this implies that $\Res_{s=\delta\nu}\Lambda_g(s,\psi)=0$ for all odd characters $\psi$ of conductor $q$. For the even twists consider \eqref{eq:resnot0} for $s_0=0$ and $t_0=2$. By our previous considerations the first line vanishes and we are left with
\[
N^{-\frac12+\delta\nu}
\frac{(2\pi i)^{2}}{2}\frac
{
\gamma_f^{+}(3-\delta\nu)
}
{
\gamma_f^{+}(1-\delta\nu)
}\Res_{s=\delta\nu}
\Lambda_g\left(s,\beta,\cos\right)
=\left(\frac12-\delta\nu\right)\Res_{s=1-\delta\nu}\Lambda_f(s).
\]
Hence $\Res_{s=\delta\nu}\Lambda_g(s,\frac{b}{q},\cos)$ is independent of $b$ coprime to $q$. Again we deduce that $\Res_{s=\delta\nu}\Lambda_g(s,\psi)=0$ from equation \eqref{eq:CharacterSumAdditive} . Reversing the roles of $f$ and $g$ and using the functional equation we finally conclude \eqref{eq:want}.

\end{proof}

\section{Proof of Corollary~\ref{thm:Quotients}}\label{sec.Quotients}

Let $\psi$ be a primitive Dirichlet character of prime conductor $q$. Define $\epsilon_{\psi}\in\{0,1\}$ by $\chi(-1)=(-1)^{\epsilon_{\psi}}$. The automorphic representation $\omega_{\psi}$ of GL$_1(\mathbb{A}_{\mathbb{Q}})$ associated to $\psi$ 
defines an $L$-function $\Lambda(\omega_{\psi},s)$ with functional equation
\begin{equation}\label{eq.FEDirichlet}
\Lambda\left(\omega_{\psi},s\right)=(-i)^{\epsilon_{\psi}}\frac{\tau\left(\psi^{-1}\right)}{q^{\frac12}}q^{-s}\Lambda\left(\overline{\omega_{\psi}},1-s\right).
\end{equation} 
Let $f$ be as in Corollary~\ref{thm:Quotients}, and let $\pi_f$ denote the automorphic representation of GL$_2(\mathbb{A}_{\mathbb{Q}})$ corresponding to $f$. The symmetric square Sym$^2f$ defines a, by assumption cuspidal, automorphic representation of GL$_3(\mathbb{A}_{\mathbb{Q}})$ and we denote its $L$-function by $L(\Sym^2f,s)=\sum_{n=1}^{\infty}c_nn^{-s}$. By the Jacquet--Shalika bound  (cf. \cite{JacquetShalika1981} and \cite[Appendix]{RS}), we have $|c_n|=O(n^{1/2+\kappa})$, for all $\kappa>0$. The completed symmetric square $L$-function is given by
\[\Lambda\left(\Sym^2f,s\right) = 
\pi^{-3s/2}\Gamma\left(\frac{s+\nu}{2}\right)\Gamma\left(\frac{s-\nu}{2}\right)\Gamma\left(\frac{s}{2}\right)L(\Sym^2f,s)
=\frac{\Lambda\left(\pi_f\times\pi_f,s\right)}{\xi(s)},\]
where $\xi(s)=\Gamma_{\mathbb{R}}(s)\zeta(s)$ is the completed Riemann zeta function. The contragredient of Sym$^2f$ is $\overline{\Sym^2f}$ and  we have the functional equation
\begin{equation}\label{eq.FEsym2}
\Lambda(\Sym^2f,s)=M^{-s}\Lambda\left(\overline{\Sym^2f},1-s\right),
\end{equation}
for some integer $M$, the conductor of $\Sym^2f$, which divides $N^2$ \cite[Equation~(5.100)]{IK}. 
Let $q$ be a prime integer not dividing $M$ and let $\psi$ be a primitive Dirichlet character of conductor $q$. Because we have assumed $f$ to be self-dual, \cite[page~473]{RBAR23} implies that the twisted automorphic representation Sym$^2f\times\omega_\psi$ of GL$_3(\mathbb{A}_{\mathbb{Q}})$ has $L$-function
\begin{equation}
\Lambda\left(\Sym^2f\times\omega_\psi,s\right)=\frac{\Lambda\left((\pi_f\otimes\omega_{\psi})\times\pi_f,s\right)}{\Lambda(\omega_\psi,s)}.
\end{equation}
For $\Re(s)\gg0$, we have
\begin{equation}\label{eq.sym2twistcomplete}
\Lambda\left(\Sym^2f\times\omega_\psi,s\right)=\pi^{-3s/2}\Gamma\left(\frac{s+\epsilon_{\psi}+\nu}{2}\right)\Gamma\left(\frac{s+\epsilon_{\psi}-\nu}{2}\right)\Gamma\left(\frac{s+\epsilon_{\psi}}{2}\right)\sum_{n=1}^{\infty}\overline{\psi(n)}c_nn^{-s}.
\end{equation}
Note that the Dirichlet coefficients of this $L$-function are the Dirichlet coefficients of $L(\Sym^2 f,s)$ twisted by $\overline{\psi}$.
The contragredient of Sym$^2f\times\omega_\psi$ is given by $\overline{\Sym^2f}\times\overline{\omega_\psi}$ and we have functional equation
\begin{equation}\label{eq.FEsym2twist}
\Lambda\left(\Sym^2f\times\omega_\psi,s\right)=i^{3\epsilon_{\psi}}\psi^{-1}(M)\frac{\tau(\psi^{-1})^3}{q^{3/2}}\left(Mq^3\right)^{-s}\Lambda\left(\overline{\Sym^2f}\times\overline{\omega_\psi},1-s\right).
\end{equation}
\begin{lemma}\label{lem.DCofDC}
Let $c_n,d_n$ be two sequences such that $|c_n|,|d_n|=O(n^\sigma)$ and let $a_n$ be the Dirichlet convolution of $c_n$ and $d_n$, i.e. $a_n = \sum_{d|n} c_d d_{n/d}$. Then $|a_n| = O(n^{\sigma+\kappa})$ for all $\kappa>0$.
\end{lemma}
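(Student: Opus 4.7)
The plan is to bound $|a_n|$ termwise using the triangle inequality and the hypothesis on $c_n, d_n$, then reduce the problem to the classical growth bound on the divisor function $\tau(n) = \sum_{d \mid n} 1$.

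First I would pick a constant $C > 0$ such that $|c_n| \leq C n^\sigma$ and $|d_n| \leq C n^\sigma$ for all $n \geq 1$. Then, directly from the definition $a_n = \sum_{d \mid n} c_d d_{n/d}$, the triangle inequality yields
\begin{equation*}
|a_n| \leq \sum_{d \mid n} |c_d|\,|d_{n/d}| \leq C^2 \sum_{d \mid n} d^\sigma (n/d)^\sigma = C^2 n^\sigma \tau(n).
\end{equation*}

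Next I would invoke the standard fact that for every $\kappa > 0$ one has $\tau(n) = O_\kappa(n^\kappa)$ (this follows, for instance, by multiplicativity: $\tau(p^k)/p^{k\kappa} = (k+1)/p^{k\kappa}$ is bounded, and is less than $1$ for all but finitely many prime powers). Combining this with the preceding inequality gives $|a_n| \leq C^2 n^\sigma \cdot O_\kappa(n^\kappa) = O_\kappa(n^{\sigma+\kappa})$, as required.

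There is no real obstacle here; the only mild subtlety is remembering that one cannot take $\kappa = 0$ since $\tau(n)$ is unbounded, which is why the statement is given with an arbitrary $\kappa > 0$.
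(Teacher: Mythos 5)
Your proof is correct and matches the paper's argument exactly: bound $|a_n|$ termwise by the triangle inequality to get $|a_n|\leq C^2 n^\sigma\tau(n)$, then invoke the divisor bound $\tau(n)=O_\kappa(n^\kappa)$. The only difference is that you spell out the justification of the divisor bound, which the paper takes as standard.
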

\begin{proof} If $c_n\leq Cn^\sigma$ and $d_n\leq C' n^\sigma$. Then
\[
|a_n| \leq \sum_{d|n} Cd^\sigma\cdot C' (n/d)^\sigma = CC' \tau(n)n^\sigma = O(n^{\sigma+\kappa}).
\]
\end{proof}
It follows from Lemma~\ref{lem.DCofDC} that the quotient $L(\Sym^2f,s)/\zeta(s) = \sum_{n=1}^{\infty} a_n n^{-s}$ satisfies $a_n = O(n^{\frac12+\kappa})$, for all $\kappa>0$. Similarly, we define $L(\overline{\Sym^2f},s)/\zeta(s)=\sum_{n=1}^{\infty}b_nn^{-s}$ with $|b_n|=O(n^{\frac12+\kappa})$, for all $\kappa>0$. For any primitive Dirichlet character $\psi$ of conductor $q$ not dividing $M$, we have
\begin{align}\label{eq.QuotientGammaDirichlet}
\begin{split}
\frac{\Lambda\left(\Sym^2f\times\omega_\psi,s\right)}{\Lambda\left(\omega_{\psi},s\right)}=\Gamma_{\mathbb{R}}\left(s+\epsilon_{\psi}+\nu\right)\Gamma_{\mathbb{R}}\left(s+\epsilon_{\psi}-\nu\right)\sum_{n=1}^{\infty}\overline{\psi(n)}a_nn^{-s},\\
\frac{\Lambda\left(\overline{\Sym^2f}\times\overline{\omega_\psi},s\right)}{\Lambda\left(\overline{\omega_{\psi}},s\right)}=\Gamma_{\mathbb{R}}\left(s+\epsilon_{\psi}+\nu\right)\Gamma_{\mathbb{R}}\left(s+\epsilon_{\psi}-\nu\right)\sum_{n=1}^{\infty}\psi(n)b_nn^{-s}.
\end{split}
\end{align}
Let $\mathcal{P}$ denote the set of primes not dividing $M$. For any primitive Dirichlet character of conductor $q\in\mathcal{P}\cup\{1\}$, dividing equation~\eqref{eq.FEsym2twist} by equation~\eqref{eq.FEDirichlet} gives
\begin{equation}\label{eq.FEquotient}
\frac{\Lambda\left(\Sym^2f\times\omega_\psi,s\right)}{\Lambda\left(\omega_{\psi},s\right)}=(-1)^{\epsilon_{\psi}}\frac{\tau(\psi^{-1})^2}{q}\left(Mq^2\right)^{-s}\frac{\Lambda\left(\overline{\Sym^2f}\times\overline{\omega_\psi},1-s\right)}{\Lambda\left(\overline{\omega_{\psi}},1-s\right)}.
\end{equation}
Keeping in mind equation~\eqref{eq.QuotientGammaDirichlet}, we see that equation~\eqref{eq.FEquotient} reduces to equation~\eqref{eq:FE} with $\epsilon=0$.

 The quotients in equation~\eqref{eq.QuotientGammaDirichlet} extend to meromorphic functions on $\mathbb{C}$. If $\Lambda\left(\Sym^2f,s\right)/\xi(s)$ has only finitely many poles, then Theorem~\ref{thm:PolarConverse} implies that
\[\frac{\Lambda\left(\Sym^2f,s\right)}{\xi(s)}=\Lambda_h(s)\]
for some even Maass form $h$ of level $M$, trivial nebentypus and eigenvalue equal to that of $f$. After cancelling the gamma functions, we are left with an identity of Dirichlet series
\[L\left(\Sym^2f,s\right)=L_h(s)\zeta(s).\]
The Maass form $h$ can be written as a finite linear combination of Hecke eigenforms\footnote{In fact, one could show that $h$ itself is a Hecke eigenform by generalising \cite[Theorem~1.1]{BT} to Maass forms. Such a generalisation is discussed in [loc. cit., Remark 3]. Upon establishing that, Corollary~\ref{thm:Quotients} would follow from \cite[Theorem~1.1]{factorisationLfunctions}.} $h_i$, that is $h(z)=\sum_{i=1}^m\alpha_i h_i(z)$. Therefore, $L_h(s)=\sum_{i=1}^m\alpha_iL_{h_i}(s).$  Each $L_{h_i}(s)$ has an Euler product factorisation. This contradicts the linear independence of automorphic $L$-functions as proved in \cite{KMP}.
\appendix

\section{~}
\subsection{The Gauss hypergeometric function}\label{Hypergeometric.subsection}
The Mellin transforms of twisted Maass forms yielded the hypergeometric function $_2F_1$, which is defined initially on $|z|<1$ by the following power series
\begin{equation}
\hyp{a}{b}{c}{z} = \sum_{n=0}^{\infty}\frac{(a)_n(b)_n}{(c)_nn!}z^n,
\end{equation}
where $(x)_n$ is the rising Pochhammer symbol, that is,
\[
(x)_n=\frac{\Gamma(x+n)}{\Gamma(x)}=x(x+1)\cdots(x+n-1).
\] 
The function $\hyp{a}{b}{c}{z}$ satisfies the so-called Euler identity:
\begin{align}\label{eqn:euler identity}
\hyp{a}{b}{c}{z} = (1-z)^{c-a-b} \hyp{c-a}{c-b}{c}{z}.
\end{align}
We also have \cite[(A.11)]{Mezhericher}:
\begin{align}\label{eqn:hypergeo transform}
\hyp{a}{b}{c}{z} = (1-z)^{-a}\hyp{a}{c-b}{c}{\frac{z}{z-1}},\quad\text{for }z\notin(1,\infty).
\end{align}
Hence for $w\in\R$
\begin{equation}\label{eqn:hypergeo transform 2}
\hyp{\frac{\epsilon+s+\nu}{2}}{\frac{\epsilon+s-\nu}{2}}{\frac{1+2\epsilon}{2}}{-w^2} = 
(1+w^2)^{-\frac{\epsilon+s+\nu}{2}} \hyp{\frac{\epsilon+s+\nu}{2}}{\frac{\epsilon+1-s+\nu}{2}}{\frac{1+2\epsilon}{2}}{\frac{w^2}{1+w^2}}.
\end{equation}
Define $z = \frac{1-w^2}{1+w^2}$. We introduce some notation from \cite[\S 7.1]{Luke1969}. Let $z=\cosh(i\nu)$, where $\nu\in[0,\pi)$. In this special case the sector $\mathcal{Q}$ from \cite[\S 7.1]{Luke1969} is just $|\arg\lambda|\leq \pi - \delta$ for some $\delta>0$. In particular $\lambda$ lies in it if $\Im \lambda \to \infty$. Equation (8) from \textit{loc}.\textit{cit}. tells us how the hypergeometric function above grows when $\Im \lambda\to\infty$: 
\begin{multline*}
\hyp{a+\lambda}{b-\lambda}{c}{\frac{1-z}{2}}\sim\frac{2^{a+b-1}\Gamma(1-b+\lambda)\Gamma(c)(1+e^{-i\nu})^{c-a-b-1/2}}{(\lambda\pi)^{1/2}\Gamma(c-b+\lambda)(1-e^{-i\nu})^{c-1/2}}\\
\times \left[e^{(\lambda-b)i\nu}+e^{\pm i \pi(c-1/2)-(\lambda+a)i\nu}+O(1/\lambda)\right]
\end{multline*}
For $s=\sigma+it$ we set $a=\frac{\epsilon+\sigma+\nu}{2}$, $b=\frac{\epsilon+1-\sigma+\nu}{2}$, $c=\frac{1+2\epsilon}{2}$ and $\lambda=\frac{it}{2}$. Recall Stirling's formula for fixed $\sigma$: 
\begin{equation}\label{eq:Stirling}
\Gamma(\sigma+it)=\sqrt{2\pi}(it)^{\sigma-1/2}e^{-\frac{\pi}{2}|t|}\left(\frac{|t|}{e}\right)^{it}\left(1+O(|t|^{-1}\right),~|t|\rightarrow\infty.
\end{equation}
We deduce that \eqref{eqn:hypergeo transform 2} grows at most like $e^{\frac{v}{2}t}$. Finally we conclude
\[
\Gamma\left(\frac{s+\nu+\epsilon}{2}\right)\Gamma\left(\frac{s-\nu+\epsilon}{2}\right)
\hyp{\frac{s+\nu+\epsilon}{2}}{\frac{s-\nu+\epsilon}{2}}{\frac12+\epsilon}{-w^2}
\]
decays exponentially as $|t|\rightarrow \infty$ for all $w\in\R$, since the exponential term in both Gamma factors is $e^{-\frac{\pi}{4}t}$ while the hypergeometric function has exponential term $e^{\frac{v}{2}t}$ and $v<\pi$. 

In section~\ref{PolarConverse.subsection} we used an explicit analytic continuation for the hypergeometric function outside its disc of convergence. When $|z|>1$ and $a-b\notin\mathbb{Z}$, one has that\footnote{\href{http://functions.wolfram.com/HypergeometricFunctions/Hypergeometric2F1/02/02/0001/}{wolfram.com/HypergeometricFunctions/Hypergeometric2F1/02/02/0001/}}:
\begin{multline}\label{eq:hypergeometric continuation}
\hyp{a}{b}{c}{z}=\frac{\Gamma(b-a)\Gamma(c)(-z)^{-a}}{\Gamma(b)\Gamma(c-a)}\sum_{k=0}^{\infty}\frac{(a)_k(a-c+1)_kz^{-k}}{k!(a-b+1)_k}\\
+\frac{\Gamma(a-b)\Gamma(c)(-z)^{-b}}{\Gamma(a)\Gamma(c-b)}\sum_{k=0}^{\infty}\frac{(b)_k(b-c+1)_kz^{-k}}{k!(b-a+1)_k}.
\end{multline}

\end{document}